\documentclass[11pt]{amsart}                  
\usepackage{amssymb,amsmath,amsfonts,latexsym}
\usepackage[margin=1.1in]{geometry}
\usepackage{enumerate}
\usepackage{slashed}
\usepackage{xypic}

\newcommand{\complex}{{\mathbb C}}

\newcommand{\C}{{\mathbb{C}}}
\newcommand{\R}{{\mathbb{R}}}

\newcommand{\N}{{\mathbb{N}}}

\newcommand{\cale}{{\mathcal E}}
\newcommand{\calf}{{\mathcal F}}

\newcommand{\calh}{{\mathcal H}}

\newcommand{\call}{{\mathcal L}}
\newcommand{\calm}{{\mathcal M}}

\newcommand{\cals}{{\mathcal S}}
\newcommand{\calt}{{\mathcal T}}

\newcommand{\calu}{{\mathcal U}}

\newcommand{\sfG}{\mathsf{G}}

\newcommand{\cea}{C_e(A^*)}
\newcommand{\cEg}{C^*_{\mathcal{E}}(\sfG)}
\newcommand{\cEa}{C^*_{\mathcal{E}}(A)}

 \DeclareMathOperator{\Hom}{Hom}

\DeclareMathOperator{\Ind}{Ind}

\newtheorem{thm}{Theorem}[section]
\newtheorem{prop}[thm]{Proposition}
\newtheorem{cor}[thm]{Corollary}
\newtheorem{lem}[thm]{Lemma}
\theoremstyle{definition}
\newtheorem{rem}[thm]{Remark}
\newtheorem{ex}[thm]{Example}
\theoremstyle{definition}
\newtheorem{defn}[thm]{Definition}
\newtheorem{assump}[thm]{Assumption}

\newcommand{\be}{\begin{eqnarray}}
\newcommand{\ee}{\end{eqnarray}}

\theoremstyle{plain}

        \usepackage{hyperref}
        
\begin{document}
\title[Roe $C^*$-algebra and vanishing theorem]{Roe $C^*$-algebra for groupoids and generalized Lichnerowicz Vanishing theorem for foliated manifolds }
\author{Xiang Tang, Rufus Willett, and Yi-Jun Yao}
\maketitle

\begin{abstract}We introduce the concept of Roe $C^*$-algebra for a locally compact groupoid whose unit space is in general not compact, and that is equipped with an appropriate coarse structure and Haar system. Using Connes' tangent groupoid method, we introduce an analytic index for an elliptic differential operator on a Lie groupoid equipped with additional metric structure, which takes values in the K-theory of the Roe $C^*$-algebra. We apply our theory to derive a Lichnerowicz type vanishing result for foliations on open manifolds. 
\end{abstract}

\section{Introduction}

In this article, we study the index theory of longitudinal elliptic operators on open foliated manifolds.   Our study is inspired by the longitudinal index theory on a manifold $M$ with a regular foliation $\calf$ due to Connes and Connes-Skandalis (see for example \cite{co79}, \cite{co86}, and \cite{co-sk}).  Examples of longitudinal elliptic operators on open foliated manifolds appear naturally, both in their own right and also by associating non-compact manifolds to closed manifolds as in the next two examples.  

As a first example, in order to define the transverse fundamental class on $(M, \mathcal{F})$ in \cite{co86}, Connes introduced a noncompact manifold $\mathcal{M}$, the space of metrics on the normal bundle of $\calf$, together with a regular foliation $\widetilde{\calf}$ of $\mathcal{M}$. A longitudinal elliptic operator on $(M, \calf)$ has a canonical lifting to a longitudinal elliptic operator on $(\mathcal{M}, \widetilde{\calf})$.  The lifted operator on $(\mathcal{M}, \widetilde{\calf})$ plays a key role in Connes' theory of transverse fundamental classes.

A second source of examples of longitudinal elliptic operators on open manifolds is our efforts to incorporate the fundamental group into index theory on a foliated manifold $(M, \calf)$.  Here, we consider the covering space $\widehat{M}$ of $M$ together with the lifted regular foliation $\widehat{\calf}$. A longitudinal elliptic operator on $(M, \calf)$ naturally lifts to a longitudinal elliptic operator on $(\widehat{M}, \widehat{\calf})$.\\

To allow the possibility that the rank of $\calf$ can vary on $M$, we develop our index theory for elliptic differential operators on a general Lie groupoid $\mathsf{G}$ with unit space $\sfG_0$.  Our theory applies to a manifold $M$ with a regular foliation $\calf$ when the groupoid $\mathsf{G}$ is the holonomy groupoid associated to a foliation $\calf$ on $M=\sfG_0$.

If $\sfG_0$ is a closed manifold, the index of a longitudinally elliptic differential operator on $\sfG$ defines an element in the K-theory of the (reduced) groupoid $C^*$-algebra $C^*_r(\sfG)$ of the groupoid $\sfG$.  However, this is generally not the case when $\sfG_0$ is not closed.  In general, we show how to use a Euclidean structure on the Lie algebroid $A$ to define a coarse structure $\cale$ on $\sfG$ in the sense of \cite{hig-ped-roe:controlled} under a natural completeness assumption (see Proposition \ref{prop:groupoid-entourage} for details).  Following ideas that start in \cite{ro:open}, we then introduce a Roe $C^*$-algebra $\cEg$ for $\sfG$ associated to the coarse structure $\mathcal{E}$; its $K$-theory provides a natural home for indices of longitudinal elliptic operators in the non-compact case.  If $\sfG_0$ happens to be closed, $\cEg$ is just $C^*_r(\sfG)$, while if $\sfG$ is the pair groupoid on a Riemannian manifold $M$, $\cEg$ is the classical Roe algebra $C^*(M)$ of $M$ \cite[Chapter 3]{Roe:index}; thus our theory generalizes both of these cases.  

Assume now that the Lie algebroid $A$ associated to $\sfG$ is equipped with a Euclidean structure, and $\sfG$ with the coarse structure it induces.  We also associate an algebra $\cea$ of continuous functions on $A$ whose $K$-theory provides a natural home for symbols of geometrically defined operators.  A coarse version of the groupoid $C^*$-algebra of Connes' tangent groupoid $\calt\sfG$ of $\sfG$ (c.f. \cite{connes:book}) leads to the following coarse analytic index map
\[
\Ind_a: K_\bullet\big(\cea\big)\longrightarrow K_\bullet\big(\cEg\big).
 \]
This approach to `coarse index theory' via tangent groupoids seems to be new even in the classical case when $\sfG$ is the pair groupoid associated to a Riemannian manifold. 

To understand the analytic index map $\Ind_a$, we consider what we call a \emph{smooth symbol} $(V_1, V_2, \sigma)$ representing a class $[V_1, V_2, \sigma]$ in $K_\bullet\big(\cea\big)$.  Roughly a smooth symbol consists of a pair of smooth vector bundles $V_1$, $V_2$ over $\sfG_0$ and a smooth endomorphism $\sigma$ between their pullbacks to $A^*$ that is invertible at infinity; so far this is analogous to the symbol data appearing in the classical Atiyah-Singer theorem, but in our non-compact setting we need an additional assumption on boundedness of the derivatives of $\sigma$ (see Definition \ref{dfn:geometry-K} for details).   Using groupoid pseudodifferential operators as introduced in \cite{mont-pier}, \cite{nwx:pseudo}, we consider a `quantization' $\{Q^\lambda(\sigma)\}_{\lambda\in [0,1]}$ of the symbol, and construct a multiplier $Q(\sigma)$ of the tangent groupoid Roe $C^*$-algebra. We prove the following theorem under a natural bounded geometry assumption; roughly, this says that all the curvature tensors on the target fibers of $\sfG$ have uniformly bounded derivatives, and injectivity radius bounded below  (see Assumption \ref{ass:kappa} for details).

 \begin{thm} (Theorem \ref{thm:k-theory}) Assume the Euclidean structure on $A$ satisfies Assumption \ref{ass:kappa}. Given a smooth symbol $(V_1, V_2, \sigma)$ representing an element in $K_0(C^*_e(A^*))$, the operator $Q(\sigma)$ defines a $K$-theory element $[Q(\sigma)]$ in $K_0(C^*_\cale (\calt \sfG))$ such that $ev_{0*}([Q(\sigma)])=[V_1, V_2, \sigma]$. 
\end{thm}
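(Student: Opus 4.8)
The plan is to realize $[Q(\sigma)]$ through the classical parametrix/index-idempotent construction, carried out uniformly over the deformation parameter, and then to identify the resulting class at $\lambda=0$ by a fibrewise Fourier transform. Since $Q(\sigma)=\{Q^\lambda(\sigma)\}_{\lambda\in[0,1]}$ is already known to be a multiplier of $C^*_\cale(\calt\sfG)$, the first task is purely to show that it is invertible modulo the algebra. The symbol $\sigma$ is invertible at infinity on $A^*$ and, by Definition \ref{dfn:geometry-K}, has uniformly bounded derivatives; using the groupoid pseudodifferential calculus of \cite{nwx:pseudo}, \cite{mont-pier} I would build an order-zero parametrix $R=Q(\tau)$ from a symbol $\tau$ that inverts $\sigma$ outside a compact region of each cofiber.

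The key step is then to establish the two remainder estimates: that $1-R\,Q(\sigma)$ and $1-Q(\sigma)\,R$ lie in $C^*_\cale(\calt\sfG)$ itself, not merely in its multiplier algebra. In the classical compact case these remainders are smoothing, hence compact; here the essential point is to upgrade \emph{smoothing} to \emph{membership in the Roe algebra}, which requires uniform control of the Schwartz kernels along the non-compact unit space $\sfG_0$ and uniformly across $\lambda$. This is exactly where Assumption \ref{ass:kappa} enters: the uniform bounds on the curvature of the target fibers together with the lower bound on injectivity radius yield uniform Sobolev and elliptic estimates for the rescaled operators $Q^\lambda(\sigma)$, so that the remainder kernels decay in a controlled fashion and are approximable by finite-propagation operators. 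Combined with continuity in $\lambda$ of the rescaled calculus --- in particular the controlled behaviour as $\lambda\to 0$, where the operators degenerate to the algebroid model --- this shows that $Q(\sigma)$ is invertible modulo $C^*_\cale(\calt\sfG)$, and the standard idempotent formula built from $Q(\sigma)$ and $R$ produces a well-defined class $[Q(\sigma)]\in K_0(C^*_\cale(\calt\sfG))$.

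It then remains to compute $\mathrm{ev}_{0*}[Q(\sigma)]$. At $\lambda=0$ the tangent groupoid restricts to the Lie algebroid $A$ with its fibrewise-additive vector-bundle groupoid structure, and the fibrewise Fourier transform identifies the corresponding evaluation of $C^*_\cale(\calt\sfG)$ with the symbol algebra $\cea$; under this identification the model operator $Q^0(\sigma)$ is precisely multiplication by $\sigma$ on the pullbacks of $V_1$ and $V_2$ to $A^*$. Consequently the idempotent defining $[Q(\sigma)]$ specializes at $\lambda=0$ to the standard idempotent attached to the invertible-at-infinity endomorphism $\sigma$, which by definition represents $[V_1,V_2,\sigma]$ in $K_0(\cea)$, giving $\mathrm{ev}_{0*}([Q(\sigma)])=[V_1,V_2,\sigma]$.

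I expect the main obstacle to be the uniformity of the parametrix construction across the whole interval, and especially the control of the remainders as genuine elements of the Roe algebra near the degenerate limit $\lambda=0$. Establishing that the rescaled pseudodifferential remainders are not only smoothing but actually lie in $C^*_\cale(\calt\sfG)$, with finite-propagation approximations uniform in $\lambda$, is the technical heart of the argument, and it is precisely the bounded-geometry hypotheses of Assumption \ref{ass:kappa} that make these uniform estimates available.
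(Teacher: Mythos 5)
Your proposal follows essentially the same route as the paper's proof: take the parametrix to be $Q(\tau)$ with $\tau$ the partial inverse built into the smooth-symbol data, show via the (uniformly controlled, $\lambda$-rescaled) pseudodifferential calculus under Assumption \ref{ass:kappa} that $Q(\sigma)Q(\tau)-I$ and $Q(\tau)Q(\sigma)-I$ lie in $C^*_\cale(\calt\sfG)$ rather than merely being smoothing, conclude that $Q(\sigma)$ is a multiplier invertible modulo the ideal, and extract $[Q(\sigma)]\in K_0(C^*_\cale(\calt\sfG))$ by the standard index/boundary construction (your idempotent formula is the explicit form of the paper's boundary map applied to the class in $K_1$ of the quotient). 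Your identification of $ev_{0*}[Q(\sigma)]$ --- that at $\lambda=0$ the fibrewise Fourier transform turns $Q^0(\sigma)$ into multiplication by $\sigma$ on $C_e(A^*)$, so the idempotent specializes to the one defining $[V_1,V_2,\sigma]$ --- is exactly the computation the paper leaves as ``follows easily from the definitions.''
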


We apply our theorem to study positive scalar curvature metrics.  In particular, we prove the following result; this requires the same boundedness assumptions on the derivatives of the curvature tensors and injectivity radius as before (see Assumption \ref{ass:kappa} for details).
\begin{thm}
Let $\sfG$ be a Lie groupoid, and assume that the associated Lie algebroid $A$ has even rank, and is equipped with a spin structure such that the associated Riemannian metric $g_A$ satisfies Assumption \ref{ass:kappa}.  Then the triple $(\cals^+, \cals^-, \sigma(\slashed{D}_+))$ associated to the Dirac operator is a smooth symbol in the sense of Definition \ref{dfn:geometry-K}.  Moreover, if 
$g_A$ has uniformly positive scalar curvature, the analytic index of the K-theory class $[\cals^+, \cals^-, \sigma(\slashed{D}_+)]$ vanishes in $K_0(C^*_\cale(\sfG))$, i.e. 
\[
\Ind_a\Big([\cals^+, \cals^-, \sigma(\slashed{D}_+)]\Big)=0. \qedhere
\]
\end{thm}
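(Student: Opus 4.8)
The plan is to handle the two assertions in turn: first check that the Dirac triple meets all the requirements of Definition \ref{dfn:geometry-K}, and then run a Lichnerowicz argument at the $\lambda=1$ end of the tangent-groupoid deformation to kill the analytic index.

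For the first assertion, I would identify $\cals^\pm$ as the half-spinor bundles over $\sfG_0$ determined by the spin structure on $A$; these are smooth vector bundles by construction. The principal symbol is Clifford multiplication, $\sigma(\slashed{D}_+)(\xi)=i\,c(\xi)|_{\cals^+}\colon \cals^+\to\cals^-$ for $\xi\in A^*$ (using $g_A$ to identify $A^*\cong A$). Since $c(\xi)^2=-|\xi|_{g_A}^2$, this endomorphism is invertible for every $\xi\neq 0$, with inverse $-i\,c(\xi)/|\xi|^2$, so $\sigma(\slashed{D}_+)$ is invertible at infinity in the fiber directions of $A^*$. It then remains to verify the uniform boundedness of derivatives demanded by Definition \ref{dfn:geometry-K}. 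Because the symbol is fiberwise linear, its $\xi$-derivatives are automatically controlled; the base-direction derivatives are governed by the variation of the Clifford module structure and the Levi-Civita connection of $g_A$ along the target fibers of $\sfG$, and these are uniformly bounded precisely because of the curvature and injectivity-radius bounds in Assumption \ref{ass:kappa}.

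For the vanishing, I would first invoke Theorem \ref{thm:k-theory} to obtain $[Q(\sigma)]\in K_0(C^*_\cale(\calt\sfG))$ with $ev_{0*}[Q(\sigma)]=[\cals^+,\cals^-,\sigma(\slashed{D}_+)]$. Since the analytic index is $\Ind_a=ev_{1*}\circ ev_{0*}^{-1}$ and $ev_{0*}$ is an isomorphism, this gives
\[
\Ind_a\big([\cals^+,\cals^-,\sigma(\slashed{D}_+)]\big)=ev_{1*}\big([Q(\sigma)]\big).
\]
Evaluating the quantization at $\lambda=1$ produces a multiplier of $C^*_\cale(\sfG)$ whose principal symbol is $\sigma(\slashed{D}_+)$, i.e.\ a representative of the longitudinal Dirac operator $\slashed{D}$ acting fiberwise on spinors over the target fibers $\sfG^x$. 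The heart of the matter is the fiberwise Lichnerowicz identity $\slashed{D}^2=\nabla^*\nabla+\tfrac{\kappa}{4}$, where $\kappa$ is the scalar curvature of $g_A$. Under the hypothesis $\kappa\geq c>0$ uniformly, this yields the uniform lower bound $\slashed{D}^2\geq c/4$, so each $\slashed{D}_\pm$ is invertible with the norm of its inverse at most $2/\sqrt{c}$, uniformly in $x\in\sfG_0$.

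The final step converts this uniform invertibility into the vanishing of $ev_{1*}[Q(\sigma)]$. Because of the uniform spectral gap at $0$, one may choose a normalizing (chopping) function $\chi$ that equals $\mathrm{sgn}$ on the spectrum of $\slashed{D}$; then $\chi(\slashed{D})$ lifts the symbol class to an honest invertible multiplier of $C^*_\cale(\sfG)$, and exactness of the index (boundary) map in the six-term sequence forces the resulting class in $K_0(C^*_\cale(\sfG))$ to be zero. Equivalently, using $(\slashed{D}_+)^{-1}$ as a genuine parametrix collapses the usual index idempotent to a trivial one. The main obstacle, and the point where Assumption \ref{ass:kappa} is indispensable, is to justify that these functional-calculus elements actually remain in the multiplier algebra of the Roe algebra $C^*_\cale(\sfG)$, i.e.\ that the inverse is approximable by finite-propagation operators compatible with the coarse structure $\cale$. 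Here the uniform spectral gap combined with the bounded-geometry bounds gives uniform finite-propagation-speed estimates for the wave operators $\cos(t\slashed{D})$, producing the required finite-propagation approximations to $\chi(\slashed{D})$ and thereby placing the inverse inside the Roe-algebra framework.
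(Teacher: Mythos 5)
Your verification of the first assertion has a genuine gap. You take the symbol to be Clifford multiplication, $\sigma(\slashed{D}_+)(\xi)=i\,c(\xi)$, which is linear --- hence unbounded --- in $\xi$. Definition \ref{dfn:geometry-K} requires $\sigma$ to arise as the restriction of a bundle endomorphism defined on the one-point compactification of $A^*_{\mathcal{E}}$, and requires $N_m(\widehat{\sigma})<\infty$ for all $m$; the $k=0$ term of $N_m(\widehat{\sigma})$ is $\sup\|\widehat{\sigma}\|$, which is already infinite for a fiberwise-linear symbol. So the triple as you have written it is simply not a smooth symbol, and the claim that the $\xi$-derivatives are ``automatically controlled'' by linearity fails precisely at order zero. (Two smaller slips: with your convention $c(\xi)^2=-|\xi|^2$ the inverse of $i\,c(\xi)$ is $+i\,c(\xi)/|\xi|^2$, not $-i\,c(\xi)/|\xi|^2$; and this putative inverse is in any case singular along the zero section, so it is not a bundle endomorphism on $A^*$ either.) The paper avoids exactly this problem: by its stated abuse of notation, $\sigma(\slashed{D}_+)$ there denotes the symbol of the \emph{normalized} order-zero operator $P^+_x:=\slashed{D}^+_x(\slashed{D}^-_x\slashed{D}^+_x)^{-1/2}$, which is bounded, essentially $c(\xi)/\|\xi\|$ away from the zero section, homogeneous of degree zero at infinity, and whose $N_m$-bounds are extracted from Assumption \ref{ass:kappa}. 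To repair your argument you must pass to such a normalization (e.g.\ $c(\xi)\,(1+\|\xi\|^2)^{-1/2}$, adjusted to be exactly degree-zero homogeneous near infinity as permitted by Remark \ref{rem:symbol}) and verify the conditions of Definition \ref{dfn:geometry-K} for that symbol instead.

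Your vanishing argument, by contrast, is essentially the paper's in different clothing. Both of you use Theorem \ref{thm:k-theory} and Corollary \ref{cor:index} to reduce to showing $[Q^1(\sigma)]=0$ in $K_0(C^*_\cale(\sfG))$, and both kill this class by exhibiting an honest invertible multiplier lift supplied by the Lichnerowicz estimate: your chopping-function operator $\chi(\slashed{D})$, with $\chi=\mathrm{sgn}$ on the spectrum, \emph{is} the paper's operator $P^+=\slashed{D}_+(\slashed{D}_-\slashed{D}_+)^{-1/2}$, so the difference is only functional calculus versus the pseudodifferential calculus. Your finite-propagation-speed discussion of why this operator is a multiplier of $C^*_\cale(\sfG)$ addresses a point the paper passes over in silence, which is a genuine merit; note, however, that like the paper you do not actually prove the identification of classes --- i.e.\ that $Q^1(\sigma)$ and $\chi(\slashed{D})$ differ by an element of the ideal $C^*_\cale(\sfG)$ (the paper's unproved assertion $[Q^1(\sigma(\slashed{D}))]=[P_+]$) --- so that step would still need to be supplied in a complete write-up.
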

Applying this theorem to the regular foliations $(\calm, \widetilde{\calf})$ and $(\widehat{M}, \widehat{\calf}) $, in Corollary \ref{cor:vanishing} we obtain obstructions to the existence of leafwise positive scalar curvature metric on $(M, \calf)$ as a generalization of the Lichnerowicz vanishing theorem, \cite{co86}, \cite{zhang}. \\

The article is organized as follows. In Section \ref{coarse subsec}, we discuss coarse structures on a locally compact Hausdorff groupoid, and show how they arise from suitable Riemannian structures on Lie groupoids (in particular, on holonomy groupoids).  In Section \ref{roe sec}, we introduce the groupoid Roe algebra associated to a locally compact groupoid with an appropriate coarse structure and Haar system; if the groupoid is in addition a Lie groupoid, we then use Connes' tangent groupoid to define an analytic index for longitudinal elliptic operators.  In Section \ref{quant sec}, we introduce the notion of a smooth symbol and use the groupoid pseudodifferential operator theory to study the analytic index of such a symbol.  Finally, in Section \ref{sc sec}, we apply our groupoid index theory to study (regular) foliations with leafwise positive scalar metrics. \\

\noindent{\bf Acknowledgements}: Tang's research and Willett's research are partially supported by the US NSF. Yao's research is partially supported by the CNSF. We would like to thank Jerome Kaminker, Guoliang Yu, and Weiping Zhang for helpful suggestions and encouragement. The first two authors would like to thank the Shanghai Center for Mathematical Sciences for hosting their visits, where the main part of the work was done.

\section{Coarse structure on Lie groupoids} \label{coarse subsec}

We fix some basic notation for groupoids. Let $\sfG\rightrightarrows \sfG_0$ be a locally compact groupoid with unit space $\sfG_0$.  We will always assume that such a groupoid $\sfG$ is paracompact and Hausdorff.  Let $s,t:\sfG\to \sfG_0$ be the source and target maps of $\sfG$, which we always assume to be open.  Throughout this paper, we think of an arrow as from the left to the right: thus a pair $(g,h)$ of arrows is composable if $t(g)=s(h)$, and the composition is $gh$.   The $s$-fibers and $t$-fibers of $x\in \sfG_0$ are denoted respectively by $\sfG_x:=s^{-1}(x)$ and $\sfG^x:=t^{-1}(x)$.  

If $\sfG$ is in addition a Lie groupoid then $s$ and $t$ are assumed to be submersions.  Thus $s$ and $t$  are open maps, and the $s$- and $t$- fibers are closed submanifolds of $\sfG$.  We will usually assume in this case that all $s$-fibers (and therefore all $t$-fibers) are connected.  The Lie algebroid of $\sfG\rightrightarrows \sfG_0$ is $A:=\ker{t_*}\subset T\sfG|_{\sfG_0}$.  Noting that the right action of $\sfG$ on itself permutes the $t$-fibers, sections of $A$ can be identified with right $\sfG$-invariant vector fields on $\sfG$ that are tangent to the $t$-fibers.  

We are mainly interested in Lie groupoids (and in fact, in holonomy groupoids) in this paper; nonetheless, some definitions and results are stated in more generality than this where it seems potentially useful for future applications, and where the extra generality causes no extra difficulties.

\subsection{Coarse structures and metrics}\label{metric subsec}

In \cite[Section 2]{hig-ped-roe:controlled}, Higson, Pedersen and Roe introduced a notion of {\em coarse structure} on a locally compact, Hausdorff groupoid $\sfG$ in the following way. 

\begin{defn}\label{def:entourage}
A \emph{coarse structure} on a locally compact, Hausdorff groupoid $\sfG$ is a collection $\mathcal{E}$ of open subsets $E$ of the set of arrows of $\sfG$, called \emph{entourages}, that have the following properties:
\begin{enumerate}
\item the inverse of any entourage is contained in an entourage;
\item the (groupoid) product of two entourages is contained in an entourage;
\item \label{uncon} the union of two entourages is contained in an entourage;
\item \label{propcon} entourages are proper: that is, for any entourage $E$ and any compact subset $C$ of the objects $\sfG_0$, $E\cap s^{-1}(C)$ and $E\cap t^{-1}(C)$ are relatively compact;
\item \label{covcon} the union of all the entourages is $\sfG$.
\end{enumerate}
If there is an entourage that contains the unit space $\sfG_0$, the coarse structure is called \emph{unital}.  Two coarse structures $\mathcal{E}$ and $\mathcal{F}$ are \emph{equivalent} if every element of $\mathcal{E}$ is contained in some element of $\mathcal{F}$ and vice versa.
\end{defn}

\begin{ex}\label{ex:pair1}
Say $X$ is a locally compact topological space, and $\sfG=X\times X$ is the associated pair groupoid.  Then an equivalence class of coarse structures on $\sfG$ in the sense above is essentially the same thing as a coarse structure on $X$ that is compatible with the topology as described in \cite[Definition 2.22]{Roe:locg}.  More specifically, the difference between the definition of \cite[Definition 2.22]{Roe:locg} and the specialization of Definition \ref{def:entourage} above to pair groupoids is analogous to the difference between a maximal atlas and an atlas for a manifold; it does not make any substantial difference to the resulting theory.
\end{ex}

\begin{rem}\label{ex:comp}
Note that the collection of all open relatively compact subsets of $\sfG$ always constitutes a coarse structure.  Moreover, conditions \eqref{uncon} and \eqref{covcon} from Definition \ref{def:entourage} imply that every relatively compact subset of $\sfG$ is contained in some entourage, and thus (up to equivalence) the collection of relatively compact subsets is contained in any coarse structure.  

On the other hand, if the unit space $\sfG_0$ is compact, then condition \eqref{propcon} implies that all entourages in any coarse structure are relatively compact.    Hence if $\sfG_0$ is compact, any coarse structure on $\sfG$ is  equivalent to the coarse structure consisting of all open relatively compact subsets of $\sfG$.  Thus (the equivalence class of) a coarse structure gives no more information than the topology of $\sfG$ when $\sfG_0$ is compact.
\end{rem}

In the remainder of this subsection we assume that $\sfG$ is a Lie groupoid with connected $t$-fibers, and will describe how to define a coarse structure on $\sfG$ starting with an appropriate Euclidean structure on the Lie algebroid $A$.  The following example is an important, and particularly straightforward, special case of the general construction.

\begin{ex}\label{rem:alg coarse}
Let $\sfG$ be a Lie groupoid, and $A$ the associated Lie algebroid, equipped with a Euclidean structure $\{\langle \ ,\ \rangle_x\}_{x\in \sfG_0}$ as above.  Then we may regard $A$ itself as a Lie groupoid with unit space $\sfG_0$, and all arrows having the same source and target.  Write points in $A$ as $(x,v)$ where $x\in X$ and $v$ is in the fiber $A_x$ over $x$, and define 
$$
E_r:=\{(x,v)\in A~|~\|v\|_x<r\},
$$
where $\|v\|_x:=\sqrt{\langle v,v\rangle_x}$.  Then the collection $\{E_r\}_{r>0}$ clearly defines a coarse structure on the Lie groupoid $A$.
\end{ex}

Fix a smooth Euclidean structure $\{\langle\ ,\ \rangle_x\}_{x\in \sfG_0}$ on the bundle $A$ with base space $\sfG_0$.  Let $s^*A$ be the pullback of $A$ to $\sfG$ via the source map.  For each fixed $x\in \sfG_0$ with associated $t$-fiber $\sfG^x$, the restriction $(s^*A)|_{\sfG^x}$ identifies canonically with the tangent bundle $T\sfG^x$; in this way, the Euclidean structure on $A$ gives rise to a Riemannian metric on each of the $t$-fibers $\sfG^x$.  Moreover, these metrics are invariant under the right action of $\sfG$ on itself: precisely, right multiplication by each $g\in G$ defines a diffeomorphism $\sfG^{s(g)}\to \sfG^{t(g)}$, and invariance means that these diffeomorphisms are all isometries.

For each $x\in \sfG_0$, let $d^x$ denote the length metric on the $t$-fiber $\sfG^x$ induced by the Riemannian metric defined above; note that as each $\sfG^x$ is assumed connected, each $d^x$ is everywhere finite.  Moreover, the family of metric spaces $(\sfG^x,d^x)$ is invariant for the right action of $\sfG$: precisely, for any $g\in \sf G$ and $g_1,g_2\in \sfG^{s(g)}$
\begin{equation}\label{right inv}
d^{s(g)}(g_1,g_2)=d^{t(g)}(g_1g,g_2g).
\end{equation}
Define now a function $\rho:\sfG\to [0,\infty)$ by 
\[
\rho(g):=d^{t(g)}(g, t(g)). 
\]
We will show that under one additional condition (discussed below), the sets $\{\rho^{-1}([0,r))~|~r>0\}$ define a coarse structure on $\sfG$.  The bulk of the proof of this is in the next two lemmas.

\begin{lem}\label{lem:rho-d}The function $\rho$ is subadditive and symmetric, meaning 
\[
\rho(gh)\leq \rho(g)+\rho(h)\qquad \text{ and } \qquad \rho(g^{-1})=\rho(g)
\]
for all $g,h\in \sfG$ for which these expressions make sense.
\end{lem}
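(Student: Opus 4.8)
The plan is to derive both statements directly from the right-invariance of the fiber metrics recorded in \eqref{right inv}, together with the elementary identities $g^{-1}g=t(g)$, $gg^{-1}=s(g)$ and the fact that composing with a unit does nothing. The conceptual point is that $\rho(g)=d^{t(g)}(g,t(g))$ is the distance, inside the single $t$-fiber $\sfG^{t(g)}$, from the arrow $g$ to the unit $t(g)$, and that right translation carries one $t$-fiber isometrically onto another. Everything then reduces to carefully keeping track of which fiber each arrow lives in, so that \eqref{right inv} and the triangle inequality may legitimately be applied.

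For symmetry, I would first observe that $t(g^{-1})=s(g)$, so that by definition $\rho(g^{-1})=d^{s(g)}(g^{-1},s(g))$. Now $g^{-1}$ and the unit $s(g)$ both lie in $\sfG^{s(g)}$, so I can apply \eqref{right inv} with right multiplication by $g$ to this pair. Since $g^{-1}g=t(g)$ and $s(g)\cdot g=g$, this yields $d^{s(g)}(g^{-1},s(g))=d^{t(g)}(t(g),g)=\rho(g)$, which is exactly the desired equality $\rho(g^{-1})=\rho(g)$.

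For subadditivity, take composable $g,h$, so that $t(g)=s(h)$ and $t(gh)=t(h)$; thus $\rho(gh)=d^{t(h)}(gh,t(h))$. The three arrows $gh$, $h$ and the unit $t(h)$ all lie in the fiber $\sfG^{t(h)}$, so the triangle inequality in $(\sfG^{t(h)},d^{t(h)})$ gives
\[
\rho(gh)\le d^{t(h)}(gh,h)+d^{t(h)}(h,t(h)),
\]
and the second term is precisely $\rho(h)$. To handle the first term I would right-translate by $h$: the arrows $g$ and $s(h)$ lie in $\sfG^{s(h)}$, and under the map $\cdot\, h$ they go to $gh$ and $s(h)\cdot h=h$ respectively, so \eqref{right inv} gives $d^{t(h)}(gh,h)=d^{s(h)}(g,s(h))=\rho(g)$. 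Combining these, $\rho(gh)\le \rho(g)+\rho(h)$.

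There is no genuine analytic difficulty here; the only thing requiring care—and the step most likely to trip one up—is the bookkeeping of fibers and of the composition convention: at each stage one must verify that the two arrows being compared share a common $t$-fiber (so that a fiber metric or \eqref{right inv} applies), and correctly compute the relevant products $g^{-1}g$, $s(h)\cdot h$, and so on, under the chosen left-to-right composition law. Once this is arranged, both claims follow immediately from isometry invariance and the triangle inequality.
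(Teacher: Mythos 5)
Your proof is correct and follows essentially the same route as the paper: the triangle inequality in the fiber $\sfG^{t(h)}$ plus the right-invariance identity \eqref{right inv} for subadditivity, and right-translation by $g$ applied to the pair $(g^{-1},s(g))$ for symmetry. The fiber bookkeeping and the products $g^{-1}g=t(g)$, $s(h)\cdot h=h$ are all handled correctly under the paper's left-to-right composition convention.
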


\begin{proof}
The triangle inequality and the right invariance of the metric imply 
\begin{eqnarray*}
& \rho(gh) &= d^{t(gh)}(gh, t(gh))\leq d^{t(gh)}(gh, h)+d^{t(gh)}(h, t(h))\\
& &=d^{t(g)}(g, t(g))+d^{t(h)}(h, t(h))=\rho(g)+\rho(h)
\end{eqnarray*}
and the right-invariance again implies
\[
\rho(g^{-1})=d^{t(g^{-1})}(g^{-1},t(g^{-1}))=d^{t(g^{-1}g)}(g^{-1}g,t(g^{-1})g)=d^{t(g)}(t(g),g)=\rho(g). \qedhere
\]
\end{proof}

\begin{lem}\label{lem:rho-cont}
Let $r>0$.  For any $g\in \sfG$ with $\rho(g)<r$ there is an open neighborhood $U$ of $g$ in $\sfG$ such that $\rho(h)<r$ for all $h\in U$. 
\end{lem}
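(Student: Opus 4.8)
The statement is equivalent to saying that the sublevel set $\rho^{-1}([0,r))$ is open, i.e. that $\rho$ is upper semicontinuous. My plan is to prove this by taking a single short path from $t(g)$ to $g$ inside the fiber $\sfG^{t(g)}$ and deforming it into short paths from $t(h)$ to $h$ inside the nearby fibers $\sfG^{t(h)}$. Fix $g$ with $\rho(g)<r$ and choose $\delta>0$ with $\rho(g)<r-3\delta$. By definition of the length metric there is a (smooth) path $\gamma\colon[0,1]\to\sfG^{t(g)}$ with $\gamma(0)=t(g)$, $\gamma(1)=g$, and length $L(\gamma)<r-3\delta$; since $\gamma$ lies in a single $t$-fiber, its velocity $\gamma'(u)$ is a section of the vertical subbundle $\ker t_*$ along $\gamma$. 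The core construction is to spread $\gamma$ out over nearby units: I would extend $u\mapsto\gamma'(u)$ to a time-dependent section $X_u$ of $\ker t_*$, smooth in $(p,u)$ and defined on a neighborhood of $\gamma([0,1])$, with $X_u(\gamma(u))=\gamma'(u)$. Because $X_u$ is vertical, $t_*X_u=0$, so its time-dependent flow from time $0$, say $\phi_u$, preserves the $t$-fibers. Setting $\gamma_x(u):=\phi_u(x)$ for units $x$ in a neighborhood $O$ of $t(g)$ in $\sfG_0$ — which exists, and on which the flow runs up to time $1$, by smooth dependence of ODE solutions on initial data, since $\phi_u(t(g))=\gamma(u)$ exists for all $u\in[0,1]$ — yields a smooth family of paths with $t(\gamma_x(u))=x$, $\gamma_x(0)=x$, and $\gamma_{t(g)}=\gamma$.

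Next I would control lengths and endpoints. Since $\gamma_x'(u)=X_u(\gamma_x(u))$ depends continuously on $(u,x)$ and reduces to $\gamma'(u)$ at $x=t(g)$, the fiber lengths $L(\gamma_x)=\int_0^1\|\gamma_x'(u)\|_{\gamma_x(u)}\,du$ converge to $L(\gamma)$ as $x\to t(g)$; hence $L(\gamma_x)<r-2\delta$ for $x$ in a smaller neighborhood $O'$ of $t(g)$, giving $d^x(x,\gamma_x(1))<r-2\delta$ for $x\in O'$. It remains to connect the moving endpoint $\gamma_x(1)=\phi_1(x)$ to a genuinely nearby arrow $h$. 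For this I would fix a submersion chart $\psi\colon U_g\to B^d\times B^m$ around $g$ with $t=\pi_2\circ\psi$; the fiber metrics restricted to the slices are smooth and uniformly comparable to the Euclidean one, so there is a constant $C$ with $d^{t(p)}(p,q)\le C\|\pi_1\psi(p)-\pi_1\psi(q)\|$ whenever $p,q\in U_g$ lie in a common $t$-fiber (bound the intrinsic distance by the in-slice segment). Both $h$ and $\gamma_{t(h)}(1)=\phi_1(t(h))$ tend to $g$ as $h\to g$, lie in the common fiber $\sfG^{t(h)}$, and have chart separation that is continuous in $h$ and vanishes at $h=g$; so $d^{t(h)}(\gamma_{t(h)}(1),h)<2\delta$ on a neighborhood of $g$.

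Finally, for $h$ in the intersection $U$ of $\{h:t(h)\in O'\}$, $U_g$, and the last neighborhood (all open and containing $g$), the triangle inequality gives
$$\rho(h)=d^{t(h)}(t(h),h)\le d^{t(h)}(t(h),\gamma_{t(h)}(1))+d^{t(h)}(\gamma_{t(h)}(1),h)<(r-2\delta)+2\delta=r.$$
I expect the main obstacle to be the first step: producing a family of paths $\gamma_x$ depending smoothly on the base unit $x$ while staying in the correct fiber $\sfG^x$ and starting at the unit. This is exactly where the submersion hypothesis on $t$ and the verticality of $X_u$ enter, via smooth dependence of time-dependent flows on initial conditions. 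The only other delicate point is comparing the \emph{intrinsic} fiber distance $d^{t(h)}$ with the ambient topology near $g$, which the local submersion chart handles without reference to the global geometry of the fibers.
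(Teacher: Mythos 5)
Your proposal is correct, and at the top level it shares the paper's strategy: fix a short path $\gamma$ from $t(g)$ to $g$ in $\sfG^{t(g)}$ and transplant it, using the fact that $t$ is a submersion and the Euclidean structure on $A$ is smooth, into short paths from $t(h)$ to $h$ in $\sfG^{t(h)}$ for all $h$ near $g$. Where you genuinely diverge is in how the transplantation is carried out. The paper covers $\gamma$ by finitely many product charts $V\times W$ and invokes a compactness argument (``moving along the original path through these product neighborhoods''), leaving the details --- in particular how the transplanted path is made to terminate exactly at $h$ rather than merely near $g$ --- to the reader. You instead decompose the problem into two cleanly separated steps: (i) extend $\gamma'$ to a time-dependent section $X_u$ of $\ker(t_*)$ and use its flow $\phi_u$, which preserves $t$-fibers because $t_*X_u=0$ and runs to time $1$ on a neighborhood of $t(g)$ by openness of the flow domain, to produce the whole family $\gamma_x$ at once with $L(\gamma_x)\to L(\gamma)$; and (ii) repair the endpoint mismatch between $\phi_1(t(h))$ and $h$ by the comparison $d^{t(p)}(p,q)\le C\,\|\pi_1\psi(p)-\pi_1\psi(q)\|$ in a single submersion chart at $g$. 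This buys rigor exactly where the paper is sketchy: fiber preservation and smooth dependence are packaged into standard ODE theory, and the endpoint correction --- the subtlest point, which the paper's sketch glosses over --- is isolated and handled explicitly. Two small points you should make explicit if you write this up: the extension of $\gamma'$ to a vertical time-dependent field is constructed chart by chart with a partition of unity in the time variable (this handles non-injective $\gamma$), and the chart comparison requires shrinking $U_g$ so that its closure is compact (to bound the vertical metric coefficients uniformly) and its image ball is convex (so the in-slice segment joining $\pi_1\psi(p)$ and $\pi_1\psi(q)$ stays inside the chart). Neither is a gap; both are routine.
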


\begin{proof}
Let $\epsilon>0$ be such that $2\epsilon<r-\rho(g)$.  Fix a smooth path from $t(g)$ to $g$ in $\sfG^{t(g)}$ with length $l$ at most $r-\epsilon$ (this is possible by definition of the length metric $d^{t(g)}$).  As $t:\sfG\to \sfG_0$ is a submersion and this path is compact, we may cover it by finitely many open sets (for the topology on $\sfG$) that identify with product sets of the form $V\times W$, where $V$ is an open neighborhood of $t(g)$ in $\sfG_0$, and $W$ is an open subset of $\sfG^{t(g)}$.  Using smoothness of the Euclidean structure on $A$ and another compactness argument based on moving along the original path through these product neighborhoods, it is not difficult to see that there is a neighborhood $U$ of $g$ (for the topology on $\sfG$) of the same form such that for every $h\in U$ there is a path from $t(h)$ to $h$ in $\sfG^{t(h)}$ of length at most $l+\epsilon$.  This completes the proof.
\end{proof}
 
\begin{prop}\label{prop:groupoid-entourage}
Assume that the Riemannian manifolds $\sfG^x$ constructed above from a Euclidean structure on the Lie algebroid $A$ are all complete.  Then the collection
\begin{equation}
\label{dfn:e-r} E_{r}:=\{g\in \sfG~|~ \rho(g)<r\}
\end{equation}
defines a coarse structure on $\sfG$.  
\end{prop}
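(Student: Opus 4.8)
The plan is to verify each of the five axioms in Definition \ref{def:entourage} for the collection $\{E_r\}_{r>0}$, drawing on the two preceding lemmas and the completeness hypothesis. First I would dispense with the three algebraic axioms, which follow almost immediately from Lemma \ref{lem:rho-d}. For the inverse axiom, the symmetry $\rho(g^{-1})=\rho(g)$ shows $(E_r)^{-1}=E_r$, so each entourage equals its own inverse. For the product axiom, subadditivity gives that if $g\in E_r$ and $h\in E_{r'}$ are composable then $\rho(gh)\leq \rho(g)+\rho(h)<r+r'$, so the product $E_r\cdot E_{r'}$ is contained in $E_{r+r'}$. The union axiom \eqref{uncon} is trivial since $E_r\cup E_{r'}=E_{\max(r,r')}$, and the covering axiom \eqref{covcon} holds because every $g\in\sfG$ has some finite value $\rho(g)$ (each $d^x$ being everywhere finite by connectedness) and hence lies in $E_r$ for $r>\rho(g)$.

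Next I would check that each $E_r$ is open, which is exactly the content of Lemma \ref{lem:rho-cont}: that lemma produces, for each $g$ with $\rho(g)<r$, an open neighborhood $U$ of $g$ contained in $E_r$, so $E_r$ is open. Thus the only axiom requiring real work is the properness condition \eqref{propcon}, and this is where completeness enters. I expect this to be the main obstacle. The goal is to show that for a compact $C\subseteq\sfG_0$, the sets $E_r\cap s^{-1}(C)$ and $E_r\cap t^{-1}(C)$ are relatively compact. By the symmetry of $\rho$ it suffices to treat the target case $E_r\cap t^{-1}(C)$, since applying inversion (a homeomorphism) interchanges $s$ and $t$ while preserving $\rho$.

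To handle $E_r\cap t^{-1}(C)$, the key idea is that $E_r\cap t^{-1}(x)$ is precisely the open metric ball of radius $r$ about $t(x)$ inside the $t$-fiber $\sfG^x$, and completeness of $(\sfG^x,d^x)$ makes closed balls compact by Hopf--Rinow. The remaining task is to upgrade this fiberwise compactness to compactness of the union over the compact base set $C$. I would argue that the closure of $E_r\cap t^{-1}(C)$ is contained in $\{g\in\sfG : t(g)\in C,\ \rho(g)\leq r\}$, and show this set is compact by using a sequential or net argument: given a sequence $g_n$ with $t(g_n)\in C$ and $\rho(g_n)\leq r$, pass to a subsequence so that $t(g_n)\to x\in C$, and then use a local trivialization of the submersion $t$ near $x$ together with Hopf--Rinow applied to the nearby fibers to extract a convergent subsequence; here the completeness hypothesis and smooth dependence of the Riemannian metric on the base are both essential, exactly as in the compactness argument sketched in Lemma \ref{lem:rho-cont}. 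The technical subtlety is controlling the geometry of $\sfG^{t(g_n)}$ uniformly as $t(g_n)$ ranges over the compact set $C$, so that the radius-$r$ balls do not escape to infinity; compactness of $C$ together with continuity of the metric structure provides the needed uniformity, completing the verification of \eqref{propcon} and hence the proof.
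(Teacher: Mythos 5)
Your treatment of the algebraic axioms and of openness matches the paper's (Lemmas \ref{lem:rho-d} and \ref{lem:rho-cont}), and the reduction of the source case to the target case by inversion is fine. The gap is in the properness step, in two places. First, the containment $\overline{E_r\cap t^{-1}(C)}\subseteq\{g\in\sfG : t(g)\in C,\ \rho(g)\leq r\}$ is not justified: it requires the set $\{g\in\sfG:\rho(g)\leq r\}$ to be closed, i.e.\ \emph{lower} semicontinuity of $\rho$. Lemma \ref{lem:rho-cont} gives exactly the opposite direction: openness of $\{\rho<r\}$, i.e.\ upper semicontinuity (a path in one fiber can be pushed to nearby fibers with a small increase in length). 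Nothing established so far rules out that nearby fibers admit shortcuts, so a limit $g$ of points $g_n$ with $\rho(g_n)<r$ could a priori satisfy $\rho(g)>r$. Second, extracting a convergent subsequence of $g_n$ ``using a local trivialization of the submersion $t$ near $x$'' cannot work as stated: such a trivialization only controls a neighborhood of the unit $x$ in $\sfG$, whereas the points $g_n$ sit at fiberwise distance up to $r$ from the unit space; to reach them you would need a chaining argument along the connecting paths, which is exactly the real work and is left unaddressed. You invoke Hopf--Rinow only for compactness of closed balls in a fixed fiber, which by itself does not give the uniformity over $C$ that you acknowledge is needed.

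Both problems are resolved at once by the mechanism the paper uses, which your sketch circles around but never states: completeness of the fibers gives, via Hopf--Rinow, a \emph{globally defined, continuous} exponential map $\exp\colon A\to\sfG$, and every $g\in E_r\cap t^{-1}(C)$ is joined to $t(g)$ by a \emph{minimizing} geodesic in $\sfG^{t(g)}$, whence
\[
E_r\cap t^{-1}(C)\ \subseteq\ \exp\big(\{(x,v)\in A~|~x\in C,\ \|v\|_x\leq r\}\big).
\]
The right-hand side is the continuous image of a compact subset of $A$, hence compact, hence closed (as $\sfG$ is Hausdorff), so it contains the closure of $E_r\cap t^{-1}(C)$; relative compactness follows with no semicontinuity discussion and no subsequence extraction. (Incidentally, this also shows after the fact that your candidate set $\{g : t(g)\in C,\ \rho(g)\leq r\}$ is compact, since by Hopf--Rinow it coincides with the displayed image; but it cannot serve as a stepping stone before one knows it is closed.)
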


The assumption that each $\sfG^x$ is complete is not automatic (Example \ref{ex:pair2} below), but holds in the examples of most interest to us (Proposition \ref{prop:example-entourage} below). 

\begin{proof} 
Symmetry and subadditivity as in Lemma \ref{lem:rho-d} imply that $E_r^{-1}=E_r$ and the product of $E_r$ and $E_s$ is contained in $E_{r+s}$.  Each $E_r$ is open by Lemma \ref{lem:rho-cont}.  To see properness, note that completeness of each $\sfG^x$ implies that there is a globally defined, continuous exponential map $\text{exp}:A\to \sfG$.  For any compact subset $C$ of $\sfG_0$, we have that 
$$
t^{-1}(C)\cap E_r\subseteq \text{exp}(\{(x,v)\in A~|~x\in C,\|v\|_x\leq r\},
$$
and the set on the right is clearly compact.  The fact that $s^{-1}(C)\cap E_r$ is relatively compact follows by symmetry.
\end{proof}

\begin{ex}\label{rem:alg coarse2}
Going back to Example \ref{rem:alg coarse}, note that if we consider $A$ as a Lie groupoid, then the Lie algebroid is just $A$ again.  For each $x\in \sfG_0$, the metrics $d^x$ defined above just identify with the metric on the fiber $A_x$ defined by the norm $\|\cdot\|_x$, and thus the coarse structure defined on the groupoid $A$ by Proposition \ref{prop:groupoid-entourage} is the same as the one in Example \ref{rem:alg coarse}.
\end{ex}

\begin{ex}\label{ex:pair2}
Let $M$ be a connected Riemannian manifold, and $\sfG=M\times M$ the associated pair groupoid.  Then the Lie algebroid $A$ identifies naturally with the tangent bundle $TM$, and thus inherits a Euclidean structure.  The coarse structure on $\sfG$ defined by the process above then identifies with the coarse structure on $M$ induced by the original metric as in \cite[Example 2.5]{Roe:locg}.  Moreover, each $t$-fiber $\sfG^x$ is isometric to $M$.

In particular, completeness of the $t$-fibers $\sfG^x$ is not automatic in general (as it would be if $\sfG$ were a Lie \emph{group}).
\end{ex}

For regular foliations, the construction above specializes as follows.  This is the key example of this paper.

\begin{prop}\label{prop:example-entourage} 
Let $\calf$ be a regular foliation on a complete Riemannian manifold $M$.  Let $\sfG$ be the associated holonomy groupoid, which we assume is Hausdorff.

Identify the Lie algebroid $A$ with $\calf$, and equip it with the Euclidean structure defined by restricting the Riemannian metric from $M$.  Then each of the Riemannian manifolds $\sfG^x$ defined above is complete, so we get a coarse structure on $\sfG$ as in Proposition \ref{prop:groupoid-entourage}.
\end{prop}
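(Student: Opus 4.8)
The plan is to reduce the assertion to the metric completeness of the individual leaves of $\calf$, and then to prove that completeness by a flow-box argument that uses the local product structure of the foliation in an essential way. First I would recall the standard description of the $t$-fibers of the holonomy groupoid: for each $x\in M$ the source map restricts to a map $s:\sfG^x\to L_x$ onto the leaf $L_x$ through $x$, and this is precisely the holonomy covering of $L_x$ (so in particular $\sfG^x$ is connected, since $L_x$ is, and the standing assumptions of the previous construction apply). Under the identification $(s^*A)|_{\sfG^x}\cong T\sfG^x$ used to define the metric on $\sfG^x$, with $A=\calf$ carrying the metric restricted from $M$, this covering map $s$ is a local isometry once $L_x$ is given the Riemannian metric induced from $M$. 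Since a Riemannian covering of a complete manifold is itself complete --- geodesics project to geodesics and lift back to geodesics by path-lifting for covers --- it suffices to prove that each leaf $(L_x,d_{L_x})$ with its induced length metric is complete.

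To show a leaf $L=L_x$ is complete I would verify, via Hopf--Rinow, that every unit-speed leafwise geodesic $\gamma:[0,b)\to L$ with $b<\infty$ extends to $b$. Because the inclusion $L\hookrightarrow M$ is an isometric immersion, $\gamma$ is also a unit-speed curve in $M$, so its $M$-length on $[0,b)$ equals $b<\infty$; for $t,t'\to b^-$ one has $d_M(\gamma(t),\gamma(t'))\leq |t-t'|$, so $\gamma(t)$ is Cauchy in $M$ and, by completeness of $M$, converges to some point $p\in M$.

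The crux --- and the one place where the foliation hypothesis is genuinely used --- is to show that $p$ lies on the \emph{same} leaf $L$ and that $\gamma$ extends through it. Here I would take a foliation chart (flow box) $U\cong\R^k\times\R^{n-k}$ about $p$ in which the plaques are the level sets of the transverse coordinate $\tau:U\to\R^{n-k}$, whose kernel is $\calf|_U$. Since $\gamma(t)\to p\in U$, there is $t_0<b$ with $\gamma([t_0,b))\subset U$; as $\gamma$ is tangent to $\calf$, the function $\tau\circ\gamma$ has zero derivative and is therefore constant on $[t_0,b)$, so by continuity $\tau(p)=\tau(\gamma(t_0))$. Hence $p$ lies in the same plaque as $\gamma(t_0)$, so $p\in L$, and $\gamma$ extends continuously, and then (by local existence of geodesics on the smooth immersed submanifold $L$) smoothly, past $b$. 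By Hopf--Rinow $L$ is complete, by the covering argument each $\sfG^x$ is complete, and Proposition \ref{prop:groupoid-entourage} then produces the coarse structure.

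I expect the transverse-coordinate step to be the main obstacle, since it is exactly what separates leaves of a foliation (which are metrically complete inside a complete manifold) from arbitrary immersed submanifolds (which need not be): one must check that $\gamma$ is eventually trapped in a single flow box and that leafwise tangency forces the transverse coordinate to remain constant along it, so that the ambient limit $p$ cannot escape to a nearby leaf.
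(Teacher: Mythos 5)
Your proposal is correct, and it shares the paper's overall skeleton: both arguments reduce completeness of each $t$-fiber $\sfG^x$ to completeness of the leaf $L_x$, using that $\sfG^x\to L_x$ is a connected (holonomy) covering and a local isometry, and both then prove leaf completeness by trapping the tail of a limiting object inside a single plaque of a foliation chart around the ambient limit point. Where you genuinely differ is in the form of completeness verified and in the trapping mechanism. The paper works with metric completeness directly: a $d_L$-Cauchy sequence converges in $M$ by completeness, and the paper chooses a foliation chart $U$ whose $\epsilon$-fattening is still a chart, so that distinct components of $L\cap\overline{U}$ are at $d_L$-distance at least $\epsilon$ from one another; Cauchyness then forces the tail of the sequence into one such component, which is closed, so the limit lies in $L$. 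You instead verify geodesic completeness and invoke Hopf--Rinow: a finite-time unit-speed leafwise geodesic converges in $M$, and since a geodesic is a \emph{connected leafwise curve}, its transverse coordinate $\tau$ in a flow box is literally constant, so the ambient limit lies in the same plaque with no separation estimate needed. Your route buys a cleaner trapping step (tangency to $\calf$ does all the work, and no fattened chart or $\epsilon$-separation claim is required), at the cost of invoking the ODE-theoretic geodesic extension lemma and Hopf--Rinow; the paper's route is more elementary in its ingredients (only the length metric and a closedness/separation argument) and has to be, because arbitrary Cauchy sequences---unlike geodesics---can a priori jump between plaques, which is exactly what the $\epsilon$-separation of plaques rules out. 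Both are complete proofs of the statement.
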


\begin{proof}
We first claim that each leaf $L$ is complete when equipped with the length metric $d_L$ induced from the restriction of the Riemannian metric on $M$.  This is presumably well-known, but as we couldn't find a proof in the literature we provide one for the reader's convenience.  

Let then $(x_n)$ be a Cauchy sequence for $d_L$.  The metric $d_L$ dominates the restriction of the length metric $d_M$ on $M$ to $L$, whence $(x_n)$ is Cauchy for $d_M$ as well, and thus convergent to some $x\in M$ by completeness.  Let $U$ be an open foliation neighborhood (i.e. a neighborhood of $x$ on which the foliation is a product) of $x$ in $M$ such that for some $\epsilon>0$, the $\epsilon$-neighborhood (defined using $d_M$) of $\overline{U}$ is still a foliation neighborhood of $x$.  Then the metric space $(L\cap \overline{U},d_L)$ splits into (possibly infinitely many) connected components, each of which is closed in $\overline{U}$, and all of which are all at least $\epsilon$ apart from each other for the $d_L$ metric.  Hence all but finitely many of the $x_n$ are in the same connected component of $L\cap \overline{U}$, whence $x$ is also in this connected component, and so in particular in $L$.  Hence $(L,d_L)$ is complete as claimed.

To finish the argument, recall that for each $x\in M$ the $t$-fiber $\sfG^x$ is a connected covering space of the leaf $L$ through $x$, with the Riemannian metric pulled back from $L$.  As the covering map $\sfG^x\to L$ is a local isometry for the induced length metrics, $\sfG^x$ is also complete.
\end{proof}

\begin{ex}\label{ex:s1-action}
Apart from regular foliations, actions of Lie groups are another interesting source of examples.  We give a basic example here to illustrate some issues that arise, but no doubt much more could be said.  Let $SO(2)$ act on $\R^2$ by rotations in the usual way, and let $\sfG:=\R^2\rtimes SO(2)$ be the associated crossed product groupoid; we write points of the crossed product as pairs $(x,z)$, where $x\in \R^2$ and $z\in SO(2)$.  One natural definition of a coarse structure on $\sfG$ is just to take $\cale_c:=\{\sfG\}$, which reflects the fact that the acting group $SO(2)$ is compact (`$_c$' is for `compact'), and so itself has no interesting coarse geometry.  On the other hand, identifying $SO(2)$ with the unit circle in $\C$, we can define a different coarse structure $\cale_m$ to consist of the sets 
$$
E_r:=\{(x,z)\in \sfG\mid |z-1|_\C\cdot |x|_{\R^2}<r\}
$$ 
for all $r>0$; the coarse structure $\cale_m$ reflects the fact that the orbits of the $SO(2)$ action get larger as one moves away from the origin in $\R^2$ (`$_m$' is for `metric').  The coarse structures $\cale_m$ and $\cale_c$ are not  equivalent, and have quite different properties.  

Neither $\cale_c$ nor $\cale_m$ arises directly from a Euclidean structure on the associated Lie algebroid $A$ as in Proposition \ref{prop:groupoid-entourage}, but both are equivalent to coarse structures arising in that way for appropriate choices of Euclidean structure on $A$.  Indeed, identify $A$ with $\R^2\times \mathfrak{so(2)}$, where $\mathfrak{so}(2)$ is the Lie algebra of $SO(2)$, a copy of $\R$.  Fixing a metric on $\mathfrak{so}(2)$, a Euclidean structure on $A$ is essentially the same thing as a choice of smooth function $s:\R^2\to (0,\infty)$, which governs how much the fixed metric on $\mathfrak{so}(2)$ is `scaled'.  The coarse structure $\cale_c$ is equivalent to that arising from the constant scaling factor $s(x)=1$, while the coarse structure $\cale_m$ is equivalent to that arising from the scaling factor $s(x)=1+|x|^2$.
\end{ex}

\begin{rem}\label{rmk:metric} In order to define a coarse structure on $\sfG$, it is sufficient to specify a smooth symmetric family of proper metrics on $\{\sfG^x\}_{x\in \sfG_0}$. This observation allows one to define coarse structures on general Lie groupoids that are not even source connected; however, to keep in contact with our main motivation --- regular foliations --- we will not use this.
\end{rem}

\section{Groupoid Roe algebras, tangent groupoids, and the analytic index} \label{roe sec}

\subsection{Groupoid Roe algebra} 

We assume throughout this subsection that $\sfG$ is a locally compact, second countable, Hausdorff groupoid equipped with a coarse structure.  The reader should bear in mind the case that $\sfG$ is a pair groupoid as in Example \ref{ex:pair2}, or that $\sfG$ is a holonomy groupoid with coarse structure defined using a metric on the underlying manifold $M$ as in Proposition \ref{prop:example-entourage}.

\begin{defn}\label{def:haar}
A \emph{Haar system} on $\sfG$ is a family $\{\mu^x\}_{x\in \sfG_0}$ satisfying the following conditions:
\begin{enumerate}
\item each $\mu^x$ is a regular Borel measure on the corresponding $t$-fiber $\sfG^x$ with full support;
\item the family is continuous, meaning that for each $f\in C_c(\sfG)$, the function 
$$
\sfG_0\to \C,\quad x\mapsto \int_{\sfG^x}f(g)d\mu^x(g) 
$$
is continuous\footnote{If $\sfG$ is a Lie groupoid, the family should be assumed smooth: just replace continuous functions by smooth functions everywhere.};
\item the family is right invariant, meaning that for any $g\in \sfG$ and any $f\in C_c(G)$,
$$
\int_{\sfG^{t(g)}} f(h)d\mu^{t(g)}(h)=\int_{\sfG^{s(g)}} f(hg)d\mu^{s(g)}(h).
$$
\end{enumerate}
\end{defn}

\begin{ex}\label{ex:rie haar}
If $\sfG$ is a Lie groupoid, Haar systems always exist.  Indeed, take any (smooth) Euclidean structure on the Lie algebroid $A$, lift it to a metric on each $t$-fiber as in the previous section, and define $\mu^x$ to be the measure on $\sfG^x$ canonically defined by the Riemannian metric.  We will use this choice for $\mu^x$ whenever it is convenient.
\end{ex}

\begin{defn}\label{def:roe}
Let $\sfG$ be a locally compact, Hausdorff groupoid equipped with a coarse structure and Haar system.  The \emph{Roe $*$-algebra} of $\sfG$, denoted $C_\mathcal{E}(\sfG)$, has as underlying vector space the collection of  continuous, bounded, complex-valued functions on $\sfG$ that are supported in an entourage.  The adjoint and multiplication on $C_\mathcal{E}(\sfG)$ are defined by
$$
f^*(g):=\overline{f(g^{-1})},\quad (f_1*f_2)(g):=\int_{\sfG^{t(g)}} f_1(gh^{-1})f_2(h)d\mu^{t(g)}(h).
$$
\end{defn}

It follows readily from the properties in Definitions \ref{def:entourage} and \ref{def:haar} that $C_\mathcal{E}(\sfG)$ is a well-defined $*$-algebra.  Indeed, note first that the adjoint is clearly a well-defined map, while condition \eqref{propcon} from Definition \ref{def:entourage} implies that the integrand appearing in the multiplication formula is integrable, so this formula also make sense.  On the other hand, the formulas defining the multiplication and adjoint above are the standard ones used to define groupoid convolution $*$-algebras (cf. \cite[Section II.1]{Renault:gpd approach}).  One can therefore see that the $*$-algebra axioms are satisfied by the same proofs that work for $C_c(\sfG)$: see for example \cite[Proposition II.1.1]{Renault:gpd approach}.

In order to define a $C^*$-algebraic completion of $C_{\mathcal{E}}(\sfG)$, we will need to impose extra conditions on the Haar system as follows.

\begin{defn}\label{def:bg}
Say $\sfG$ is a locally compact, Hausdorff groupoid with a coarse structure $\mathcal{E}$.  A Haar system $\{\mu^x\}$ for $\sfG$ has \emph{bounded geometry} if for all entourages $E$
$$
\sup_{x\in \sfG_0}\mu^x(E\cap \sfG^x)
$$
is finite.
\end{defn}

\begin{ex}\label{ex:rie bg}
Let $\sfG$ be a Lie groupoid with a fixed Euclidean structure on the Lie algebroid $A$.  Use this to equip each $\sfG^x$ with a Riemannian metric as in the previous section, which we assume complete, and let $\sfG$ have the associated coarse structure as in Proposition \ref{prop:groupoid-entourage}.  Let $\mu^x$ be the associated measure on $\sfG^x$ as in Example \ref{ex:rie haar}.   Assume that there is a global lower bound (possibly negative) on the Ricci curvatures of the Riemannian manifolds $\sfG^x$, independently of $x$.  Then the Bishop-Gromov theorem (see for example \cite[Theorem 107]{be03}) implies that the Haar system $\{\mu^x\}$ has bounded geometry.

In particular, say $M$ is a complete Riemannian manifold with Ricci curvature bounded below, and say $\sfG$ is the holonomy groupoid associated to some regular foliation on $M$.  Equip each $\sfG^x$ with the associated Riemannian metric, and $\sfG$ with the corresponding coarse structure, as in Proposition \ref{prop:example-entourage}.  Then the family of measures $\{\mu^x\}$ associated to the Riemannian metrics has bounded geometry.
\end{ex}

\begin{ex}\label{ex:disc bg}
Say $X$ is a locally finite discrete metric space, and $\sfG:=X\times X$ the associated pair groupoid equipped with the coarse structure defined by 
$$
E_r:=\{(x,y)\in X\times X~|~d(x,y)<r\}.
$$
Taking $\mu^x$ to be the counting measure on $\sfG^x$ for all $x\in \sfG_0$ defines a Haar system.  This Haar system has bounded geometry if and only if for any $r>0$, there is a uniform bound on the cardinalities of all $r$-balls in $X$.  This is the usual definition of bounded geometry in the setting of discrete metric spaces, and one motivation for the terminology we have adopted.
\end{ex}

The following definition is the analogue in our setting of that on \cite[page 50]{Renault:gpd approach} for the algebra $C_c(\sfG)$.

\begin{defn}\label{I norm}
Let $\sfG$ be a locally compact, Hausdorff groupoid equipped with a coarse structure and bounded geometry Haar system.
The \emph{$I$-norm} on $C_\mathcal{E}(\sfG)$ is defined by
$$
\|f\|_I:=\max\Big\{\sup_{x\in \sfG_0}\int_{\sfG^x}|f(g)|d\mu^x(g),~\sup_{x\in \sfG_0}\int_{\sfG^x}|f(g^{-1})|d\mu^x(g)\Big\}.
$$
\end{defn}
\noindent As elements of $C_{\mathcal{E}}(\sfG)$ are bounded and continuous, and as inverses of entourages are contained in entourages, the bounded geometry assumption implies that $\|f\|_I$ is finite.  Moreover, as each $\mu^x$ has full support, the $I$-norm is an honest norm rather than a semi-norm.

We are now ready to define the family of representations that we will use to make $C_\mathcal{E}(\sfG)$ into a $C^*$-algebra: these are based on the family of regular representations of a groupoid algebra $C_c(\sfG)$ as in \cite[Section 2.3.4]{Renault:notes}.

\begin{lem}\label{def:reps}
Let $\sfG$ be a locally compact Hausdorff groupoid equipped with a coarse structure and bounded geometry Haar system.  Fix $x\in \sfG_0$ and let $L^2(\sfG^x,\mu^x)$ be the corresponding $L^2$ space with norm $\|\cdot\|_2$.  For $\xi\in C_c(\sfG^x)$, $f\in C_{\mathcal{E}}(\sfG)$, and $g\in \sfG^x$, define 
$$
(\pi^x(f)\xi)(g):=\int_{\sfG^x}f(gh^{-1})\xi(h)d\mu^x(h).
$$
Then $\pi^x(f)$ satisfies the norm estimate
$$
\|\pi^x(f)\xi\|_2\leq \|f\|_I\|\xi\|_2
$$
and in particular extends to a bounded operator on $L^2(\sfG^x,\mu^x)$.  Moreover, $\pi^x$ defines a $*$-representation of $C_\cale(\sfG)$ on $L^2(\sfG^x,\mu^x)$.  
\end{lem}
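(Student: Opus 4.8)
The plan is to establish the norm estimate first, since boundedness of each $\pi^x(f)$ is what makes the remaining $*$-representation identities meaningful, and then to verify multiplicativity and the adjoint property by the standard groupoid-convolution computations. The norm estimate is a Schur-test argument: I would view $\pi^x(f)$ as the integral operator on $L^2(\sfG^x,\mu^x)$ with kernel $K(g,h)=f(gh^{-1})$, and control its ``row'' integral $\int_{\sfG^x}|f(gh^{-1})|\,d\mu^x(h)$ and its ``column'' integral $\int_{\sfG^x}|f(gh^{-1})|\,d\mu^x(g)$, each of which collapses to one of the two suprema defining $\|f\|_I$ (this is precisely why the $I$-norm is given by a maximum of two terms).

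For the row integral, fix $g\in\sfG^x$ and apply right invariance of the Haar system with respect to $g$ itself: the substitution $h\mapsto hg$ turns $\int_{\sfG^x}|f(gh^{-1})|\,d\mu^x(h)$ into $\int_{\sfG^{s(g)}}|f(h^{-1})|\,d\mu^{s(g)}(h)\le\|f\|_I$. For the column integral, fix $h$ and substitute $g\mapsto gh$, turning $\int_{\sfG^x}|f(gh^{-1})|\,d\mu^x(g)$ into $\int_{\sfG^{s(h)}}|f(g)|\,d\mu^{s(h)}(g)\le\|f\|_I$. With these two uniform bounds, the estimate follows by writing $|f(gh^{-1})|=|f(gh^{-1})|^{1/2}|f(gh^{-1})|^{1/2}$ and applying Cauchy--Schwarz in the $h$-variable, so that $|(\pi^x(f)\xi)(g)|^2$ is at most the row integral times $\int_{\sfG^x}|f(gh^{-1})|\,|\xi(h)|^2\,d\mu^x(h)$; integrating in $g$, invoking Tonelli, and using the column bound then yields $\|\pi^x(f)\xi\|_2\le\|f\|_I\|\xi\|_2$. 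Hence $\pi^x(f)$ extends to a bounded operator on $L^2(\sfG^x,\mu^x)$.

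For the $*$-representation properties I would follow the arguments that work for $C_c(\sfG)$ (cf. \cite[Proposition II.1.1]{Renault:gpd approach}). Multiplicativity $\pi^x(f_1*f_2)=\pi^x(f_1)\pi^x(f_2)$ reduces, after expanding both sides, to a right-invariance substitution $l\mapsto lh$ identifying $\int_{\sfG^{s(h)}}f_1(gh^{-1}k^{-1})f_2(k)\,d\mu^{s(h)}(k)$ with $\int_{\sfG^x}f_1(gl^{-1})f_2(lh^{-1})\,d\mu^x(l)$, followed by Tonelli to rearrange the iterated integrals. The adjoint identity $\pi^x(f^*)=\pi^x(f)^*$ is more direct: expanding $\langle\pi^x(f)\xi,\eta\rangle$ and $\langle\xi,\pi^x(f^*)\eta\rangle$ and using $\overline{f^*(hg^{-1})}=f(gh^{-1})$ shows the two double integrals coincide, again by Tonelli, with no further appeal to invariance.

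The only genuine subtlety, and where I expect to spend the most care, is justifying that all these integrals converge and that Tonelli applies, since elements of $C_\cale(\sfG)$ are merely bounded and continuous with support in an entourage rather than compactly supported. This is exactly what the bounded geometry hypothesis supplies: for $f$ supported in an entourage $E$, each of the row and column integrals above is dominated by $\|f\|_\infty\cdot\sup_{x\in\sfG_0}\mu^x(E\cap\sfG^x)<\infty$, so every kernel involved is absolutely integrable and the nonnegativity of the integrands makes Tonelli applicable throughout.
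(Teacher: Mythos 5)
Your proposal is correct and follows essentially the same route as the paper: the paper's norm estimate is exactly your Schur-test argument (Cauchy--Schwarz after splitting $|f(gh^{-1})|=|f(gh^{-1})|^{1/2}|f(gh^{-1})|^{1/2}$, with the row and column integrals each bounded by $\|f\|_I$ via right invariance of the Haar system), and the paper likewise delegates the $*$-representation identities to the standard $C_c(\sfG)$ computations in Renault. Your closing remark on why bounded geometry and support-in-an-entourage guarantee absolute integrability and the applicability of Tonelli matches the paper's own justification that the $I$-norm is finite on $C_\cale(\sfG)$.
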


\begin{proof}
Let $f$ be an element of $C_{\mathcal{E}}(\sfG)$ and $\xi$ be an element of $C_c(\sfG^x)$.  Then 
\begin{equation}\label{start}
\|\pi^x(f)\xi\|_2^2 = \int_{\sfG^x}\Big|\int_{\sfG^x} f(gh^{-1})\xi(h)d\mu^x(h)\Big|^2d\mu^x(g).
\end{equation}
Moving the absolute value inside the inner integral and applying Cauchy-Schwarz to the product 
$$
|f(gh^{-1})\xi(h)|=|f(gh^{-1})|^{1/2}|f(gh^{-1})|^{1/2}|\xi(h)|
$$
gives
\begin{align*}
\Big|\int_{\sfG^x} f(gh^{-1})\xi(h)d\mu^x(h)\Big|\leq \Big( \int_{\sfG^x}|f(gh^{-1})|d\mu^x(h)\Big)^{1/2}\Big(\int_{\sfG^x}|f(gh^{-1})|\cdot |\xi(h)|^2d\mu^x(h)\Big)^{1/2}.
\end{align*}
Using right invariance of the Haar system, the first factor is bounded above by $\|f\|_I^{1/2}$, and substituting into line \eqref{start} gives
$$
\|\pi^x(f)\xi\|_2^2\leq \|f\|_I\int_{\sfG^x}\int_{\sfG^x}|f(gh^{-1})|\cdot |\xi(h)|^2d\mu^x(h)d\mu^x(g).
$$
Switching the order of integration and using right invariance again bounds this above by
\begin{align*}
\|f\|_I\int_{\sfG^x}|\xi(h)|^2\int_{\sfG^x}|f(gh^{-1})|d\mu^x(g)d\mu^x(h)\leq \|f\|_I\int_{\sfG^x}\|f\|_I|\xi(h)|^2d\mu^x(h)\leq \|f\|_I^2\|\xi\|^2_2,
\end{align*}
giving the desired bound.  The fact that $\pi^x$ defines a $*$-homomorphism follows from the same sort of computations that show that the operations on $C_{\mathcal{E}}(\sfG)$ or $C_c(\sfG)$ define a $*$-algebra structure: see for example \cite[Proposition II.1.1]{Renault:gpd approach}.
\end{proof}

The following definition now makes sense.

\begin{defn}\label{def:red roe}
Let $\sfG$ be a locally compact, second countable, Hausdorff groupoid equipped with a coarse structure and a bounded geometry Haar system.  The \emph{Roe $C^*$-algebra} of $\sfG$, denoted $C^*_{\mathcal{E}}(\sfG)$, is the completion of $C_\mathcal{E}(\sfG)$ for the norm\footnote{It is a norm rather than a semi-norm as the measures $\mu^x$ have full support.}
$$
\|f\|:=\sup\{\|\pi^x(f)\|~|~x\in \sfG_0\}.
$$
\end{defn}  

We conclude this subsection with some remarks and examples relating Definition \ref{def:red roe} to other constructions in the literature.

\begin{rem}\label{rem:com case}
Say $\sfG$ is as in the above definition, and that $\sfG_0$ is compact.  Then Remark \ref{ex:comp} implies (whatever the coarse structure is) that $C_\mathcal{E}(\sfG)=C_c(\sfG)$, and that the Roe $C^*$-algebra $C^*_\mathcal{E}(\sfG)$ is equal to the usual reduced groupoid $C^*$-algebra $C^*_r(\sfG)$.  Thus our Roe $C^*$-algebras only give something new when the unit space $\sfG_0$ is not compact.  

Note in general that $C_\mathcal{E}(\sfG)$ always contains $C_c(\sfG)$, and that the $C^*_\cale(\sfG)$-norm on $C_\cale(\sfG)$ restricts to the $C^*_r(\sfG)$-norm on $C_c(\sfG)$; thus $C^*_\cale(\sfG)$ always contains $C^*_r(\sfG)$ as a $C^*$-subalgebra.
\end{rem}

\begin{ex}\label{rem:usual Roe}
Say $\sfG=X\times X$ is the pair groupoid associated to a discrete bounded geometry metric space $X$, equipped with the coarse structure and Haar system from Example \ref{ex:disc bg}.  Then $C_{\mathcal{E}}(\sfG)$ identifies with the $*$-algebra of finite propagation, bounded kernels on $X$, often denoted $\C_u[X]$.  Moreover, all the representations $\pi^x$ are unitarily equivalent to the standard representation of this algebra on $l^2(X)$, and thus $\cEg$ can be canonically identified with the uniform Roe algebra $C^*_u(X)$ of \cite[Section 4.4]{Roe:locg}.

More generally, say $\sfG=M\times M$ is the pair groupoid associated to a complete Riemannian manifold $M$.  The associated Lie algebroid identifies with $TM$, whence it inherits a Euclidean structure from the Riemannian structure on $M$, and this defines a coarse structure on $\sfG$ as in Example \ref{ex:pair2}.  If we assume that $M$ has Ricci curvature bounded below, then the associated Haar system $\{\mu^x\}$ has bounded geometry.  It is not too difficult to see that $\cEg$ identifies canonically with the usual Roe algebra $C^*(M)$ of \cite[Definitions 3.3 and 3.4]{Roe:index}.
\end{ex}

\begin{rem}\label{rem:roe} 
As part of a study of exactness for groupoids, Anantharaman-Delaroche recently generalized the uniform Roe algebra of a discrete group to define the \emph{uniform $C^*$-algebra} $C^*_u(\sfG)$ of a general locally compact Hausdorff groupoid $\sfG$ with Haar system: see \cite[Definition 5.1]{ana:exact}.  Although both Definition \ref{def:red roe} above and \cite[Definition 5.1]{ana:exact} are generalizations of classical (uniform) Roe algebras, the ingredients involved, the end results, and the intended applications are all quite different: in particular, \cite[Definition 5.1]{ana:exact} is a general construction that does not assume the presence of a coarse structure of $\sfG$.
\end{rem}

\begin{rem}\label{rem:max}
The above completion of $C_{\mathcal{E}}(\sfG)$ is an analogue of the reduced completion of the classical convolution algebra $C_c(\sfG)$.  It might be interesting to consider other completions, for example the natural maximal completion as was done in \cite{gwy:max} for classical Roe algebras.  We do not currently have any applications of this, so do not pursue it here.
\end{rem}

\begin{rem}\label{rem:sty}
Skandalis, Tu, and Yu \cite{sty:cbc gpd} (see also Tu's generalization \cite{tu:cbc gpd II}) constructed a \emph{coarse groupoid} $G(X)$ associated to a discrete bounded geometry metric space as in Example \ref{rem:usual Roe} above.  It is not too difficult to check that $C_{\mathcal{E}}(\sfG)$ identifies canonically with $C_c(G(X))$ in this case.  It seems reasonable to expect that there should be an analogous construction of a `coarse groupoid' associated to $\sfG$ with a coarse structure as in our current setting, but we did not pursue this. 
\end{rem}

\subsection{Tangent groupoid and associated Roe algebra}\label{tan gpd sec}

In this subsection, we recall the construction of the tangent groupoid \cite[Section II.5]{connes:book}, and build a coarse version of the associated $C^*$-algebra.  Throughout this section, $\sfG$ is a Lie groupoid and $A$ the associated Lie algebroid.  $A$ is assumed equipped with a Euclidean structure, and $\sfG$ and $A$ with the associated coarse structure and bounded geometry Haar system as in Proposition \ref{prop:groupoid-entourage}, Example \ref{rem:alg coarse2} and Example \ref{ex:rie haar}.  

Let $S$ be a closed submanifold of a manifold $X$ and let $\nu(S):=TX|_S/TS$ be its normal bundle. One can define a manifold with boundary $B_{S\hookrightarrow X}$ to be the disjoint union
\[
B_{S\hookrightarrow X}:=\{0\}\times \nu(S)\cup (0,1]\times X
\]
as a set, and use a choice of exponential map $\operatorname{exp}:\nu(S)\to X$ to produce a natural smooth structure on $B$ that restricts to those on $\nu(S)$ and $(0,1]\times X$, c.f. \cite[Section 3.1]{Hil-Sk:morphism} (the choices make no difference to the resulting smooth structure). 

We apply this general construction to a Lie groupoid $\sfG$ and the closed submanifold $\sfG_0$. Observe that the normal bundle $\nu(\sfG_0)$ is isomorphic to the Lie algebroid $A$ as a vector bundle over $\sfG_0$. We obtain the \emph{tangent groupoid} of $\sfG$, which as a set is given by 
\[
\mathcal{T}\sfG:=\{0\}\times A\cup (0,1]\times \sfG. 
\]
The manifold $\mathcal{T}\sfG$ can be viewed as a smooth family of groupoids over the interval $[0,1]$. Over the point $0\in [0,1]$, the fiber is $A$ viewed as a bundle of vector groups, and over $\lambda\in (0,1]$, the fiber is the groupoid $\sfG$. These groupoid structures are compatible with the smooth structure on $\mathcal{T}\sfG$; thus $\mathcal{T}\sfG$ is a Lie groupoid over the unit space $\sfG_0\times[0,1] $. 

Let $A_{\calt \sfG}$ denote the Lie algebroid of $\calt\sfG$, a bundle over $\sfG_0\times [0,1]$; it is isomorphic to the Lie algebroid $A\times [0,1]$, but in a slightly unnatural way, so we will not use this.  For each $\lambda\in [0,1]$, write $A_\lambda$ for the restriction of $A_{\calt \sfG}$ to $\sfG_0\times \{\lambda\}$, so each $A_\lambda$ identifies naturally with a copy of $A$.  For $\lambda\in (0,1]$, equip $A_\lambda$ with the original Euclidean structure rescaled by a factor of $1/\lambda^2$, and equip $A_0$ with the original Euclidean structure; thus the metric is `blown up' as $\lambda$ tends to zero.  These structures fit together to define a smooth Euclidean structure on $A_{\calt\sfG}$.   We use this metric structure to equip $\mathcal{T}\sfG$ with the coarse structure and Haar system from Proposition \ref{prop:groupoid-entourage} and Example \ref{ex:rie haar}.

The following lemma is clear from the definitions: the basic points are that $t$-fibers of $\mathcal{T}\sfG$ (equipped with metric and measure structure) identify canonically with $t$-fibers of either $\sfG$ or $A$, and that the `blow-up' of the metric is exactly compensated for by the corresponding blow-ups of volume.

\begin{lem}\label{lem:coarse comp}
The Lie algebroid $A$ always has bounded geometry with the coarse structure from Example \ref{rem:alg coarse} and Haar structure from Example \ref{ex:rie haar}.  Moreover, if $\sfG$ has bounded geometry in the sense of Definition \ref{def:haar}, then so too does $\mathcal{T}\sfG$.   \qed
\end{lem}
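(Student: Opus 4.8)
The statement speaks of bounded geometry of the Haar system, which I read as Definition \ref{def:bg} (Definition \ref{def:haar} only defines Haar systems). The plan is to treat the two assertions separately, reducing each to an explicit computation of the measure of an entourage inside a single $t$-fiber. For the algebroid $A$, regarded as a groupoid all of whose arrows have equal source and target, the $t$-fiber over $x\in\sfG_0$ is the single Euclidean space $(A_x,\langle\,,\,\rangle_x)$; the entourage $E_r$ of Example \ref{rem:alg coarse} meets it in the open metric ball $\{v\in A_x : \|v\|_x<r\}$, and the Haar measure $\mu^x$ of Example \ref{ex:rie haar} is the Riemannian (Lebesgue) measure attached to $\langle\,,\,\rangle_x$. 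Since the volume of a Euclidean ball of radius $r$ depends only on $r$ and the dimension $n:=\rank A$, one gets $\mu^x(E_r\cap A_x)=\omega_n r^n$ independently of $x$, where $\omega_n$ is the volume of the unit ball in $\R^n$. Hence $\sup_{x}\mu^x(E_r\cap A_x)=\omega_n r^n<\infty$, which is the first claim.

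For $\mathcal{T}\sfG$ I would run the same computation fiber by fiber over the unit space $\sfG_0\times[0,1]$. Over $(x,\lambda)$ with $\lambda\in(0,1]$ the $t$-fiber is canonically the $t$-fiber $\sfG^x$ of $\sfG$, but carrying the metric rescaled by $1/\lambda^2$; consequently its length metric is $\tfrac1\lambda d^x$ and its Riemannian measure is $\lambda^{-n}\mu^x$ (scaling a metric tensor by $1/\lambda^2$ scales $n$-dimensional volume by $\lambda^{-n}$). Writing $\rho_\sfG$ for the defining function of Proposition \ref{prop:groupoid-entourage} attached to $\sfG$, the corresponding function for $\mathcal{T}\sfG$ satisfies $\rho(\lambda,\gamma)=\tfrac1\lambda\rho_\sfG(\gamma)$, so the entourage $E_r^{\mathcal{T}\sfG}$ meets this fiber in $E_{\lambda r}^\sfG\cap\sfG^x$. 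Combining these gives the key identity
\[
\mu^{(x,\lambda)}\big(E_r^{\mathcal{T}\sfG}\cap(\mathcal{T}\sfG)^{(x,\lambda)}\big)=\lambda^{-n}\,\mu^x\big(E_{\lambda r}^\sfG\cap\sfG^x\big),\qquad \lambda\in(0,1],
\]
while over $(x,0)$ the fiber is $A_x$ and the left-hand side equals $\omega_n r^n$ by the first part. It then remains to bound the right-hand side uniformly in $x\in\sfG_0$ and $\lambda\in(0,1]$.

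For $\lambda$ bounded below, say $\lambda\in[\delta,1]$, this is immediate: since $\lambda r\le r$ and the measure of a ball is monotone in its radius, the right-hand side is at most $\delta^{-n}\sup_x\mu^x(E_r^\sfG\cap\sfG^x)$, which is finite by the bounded geometry of $\sfG$. The whole content of the lemma is therefore concentrated in the regime $\lambda\to0$, where the blow-up factor $\lambda^{-n}$ competes with the shrinking ball $E_{\lambda r}^\sfG$. The mechanism that resolves this is the \emph{exact compensation} referred to in the statement: as $\lambda\to0$ the rescaled fibers of $\mathcal{T}\sfG$ converge, in the deformation-to-the-normal-cone (exponential) chart, to the flat algebroid fibers $A_x$, and the smooth Haar density of the Lie groupoid $\mathcal{T}\sfG$ extends continuously and positively across $\lambda=0$ to the Lebesgue density on $A$. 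Thus on the fixed region $\{\rho<r\}$ the density is controlled and $\lambda^{-n}\mu^x(E_{\lambda r}^\sfG\cap\sfG^x)\to\omega_n r^n$, matching the $\lambda=0$ value; combined with the bound for $\lambda\in[\delta,1]$ this yields a finite supremum, hence bounded geometry of $\mathcal{T}\sfG$.

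The step that requires genuine care, and which I expect to be the main obstacle, is making the limit of the previous paragraph \emph{uniform in $x$}. Bounded geometry of $\sfG$ in the sense of Definition \ref{def:bg} controls $\sup_x\mu^x(E_s^\sfG\cap\sfG^x)$ for each fixed $s$, but a priori says nothing about the small-scale ratio $\sup_x \mu^x(E_s^\sfG\cap\sfG^x)/s^n$ as $s\to0$, which is exactly the quantity governing $\lambda^{-n}\mu^x(E_{\lambda r}^\sfG\cap\sfG^x)$ for small $\lambda$. The argument must therefore extract uniformity over the non-compact base $\sfG_0$ from the smooth $\lambda$-dependence of the $\mathcal{T}\sfG$ Haar density together with the right-invariance of the fiber metrics, e.g.\ by showing that in exponential normal coordinates the fiberwise volume densities near the units are uniformly controlled. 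I would isolate this uniform small-scale volume estimate as the one nontrivial lemma; it holds in all the situations of interest to us, where the injectivity-radius and curvature bounds of Assumption \ref{ass:kappa} (or merely a uniform Ricci lower bound) supply precisely the required uniform control of small-ball volumes.
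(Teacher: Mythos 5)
Your reading of the statement is right on both counts: ``bounded geometry'' here is Definition \ref{def:bg} (the citation of Definition \ref{def:haar} in the lemma is a typo), and your proof follows the same route the paper intends. In fact the paper gives no proof at all --- the lemma is stated with \qed, justified only by the sentence preceding it, that $t$-fibers of $\calt\sfG$ identify with those of $\sfG$ or $A$ and that ``the blow-up of the metric is exactly compensated for by the corresponding blow-ups of volume.'' Your scaling identity $\mu^{(x,\lambda)}\big(E_r\cap(\calt\sfG)^{(x,\lambda)}\big)=\lambda^{-n}\mu^x\big(E_{\lambda r}\cap\sfG^x\big)$ is precisely the content of that remark, and your treatment of the algebroid fiber (the flat computation $\omega_n r^n$) and of the regime $\lambda\in[\delta,1]$ (monotonicity plus bounded geometry of $\sfG$) is exactly what the paper's ``clear from the definitions'' amounts to.

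The obstacle you flag at $\lambda\to 0$ is genuine, and the paper's \qed\ passes over it silently: Definition \ref{def:bg} bounds $\sup_x\mu^x(E_s\cap\sfG^x)$ for each fixed $s$, but says nothing about the small-scale ratio $\sup_x\mu^x(E_s\cap\sfG^x)/s^n$ as $s\to 0$, and over a noncompact unit space the pointwise convergence $\lambda^{-n}\mu^x(B(x,\lambda r))\to\omega_n r^n$ need not be uniform in $x$. Indeed one can cook up pair-groupoid examples ($\sfG=M\times M$ with $M$ a complete surface carrying ``crumpled'' regions at points $x_k$ and scales $s_k\to 0$, with $s_k$-balls of volume bounded below but $r$-balls of uniformly bounded volume) in which $\sfG$ satisfies Definition \ref{def:bg} while $\sup_{x,\lambda}\lambda^{-n}\mu^x(E_{\lambda r}\cap\sfG^x)=\infty$; so the second assertion of Lemma \ref{lem:coarse comp}, read literally, needs more than Definition \ref{def:bg}. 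The missing input is however present everywhere the paper actually uses the lemma: a uniform lower Ricci bound on the fibers --- the hypothesis of Example \ref{ex:rie bg}, and a consequence of Assumption \ref{ass:kappa} --- yields, via the Bishop--Gromov comparison already cited in the paper, a uniform estimate $\mu^x(B(x,s))\leq C\,s^n$ for all $s\leq r$ and all $x\in\sfG_0$, which is exactly the ``one nontrivial lemma'' you isolate. So your decomposition is the right one and is more honest than the paper's; the only refinement I would make is that your final uniform small-ball estimate needs no injectivity-radius control, only the Ricci lower bound, so it sits at the same level of hypotheses under which the paper verifies bounded geometry of $\sfG$ in the first place.
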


As in Definition \ref{def:red roe} above, we thus get an associated Roe $C^*$-algebra $C^*_{\mathcal{E}}(\mathcal{T}\sfG)$.  Note that there are natural $*$-homomorphisms
$$
ev_0:C_\mathcal{E}(\mathcal{T}\sfG)\to C_\mathcal{E}(A),\quad ev_1:C_\mathcal{E}(\mathcal{T}\sfG)\to C_\mathcal{E}(\sfG),
$$
(where we have used the same notation for the coarse structures on $\sfG$, $\mathcal{T}\sfG$ and $A$) defined by restricting to the fibers over $0$ and $1$ respectively in the tangent groupoid and using the compatibility of the coarse structures involved.  It is clear from the definition of the norms on the associated Roe $C^*$-algebras that these homomorphisms extend to the completions, giving surjective $*$-homomorphisms. 
\begin{equation}\label{ev homs}
ev_0:C^*_{\mathcal{E}}(\mathcal{T}\sfG)\to C^*_{\mathcal{E}}(A),\quad ev_1:C^*_{\mathcal{E}}(\mathcal{T}\sfG)\to C^*_{\mathcal{E}}(\sfG).
\end{equation}

Our next task is to compute the kernel of $ev_0$.  To this end, we first need a technical lemma about the norm on $C^*_\mathcal{E}(A)$.

\begin{lem}\label{lem:A norm}
Let $B$ be any $C^*$-algebra completion of $C_\mathcal{E}(A)$ for a norm that is bounded above by the $I$-norm as in Definition \ref{I norm}.  Then the identity map on $C_{\mathcal{E}}(A)$ extends to a surjective $*$-homomorphism
$$
C^*_\mathcal{E}(A)\to B.
$$
\end{lem}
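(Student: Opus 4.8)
The plan is to reduce everything to the single norm estimate $\|f\|_B\le\|f\|_{\cEa}$ for all $f\in C_{\cale}(A)$, where $\|f\|_{\cEa}:=\sup_{x\in\sfG_0}\|\pi^x(f)\|$ denotes the Roe norm of Definition \ref{def:red roe}. Granting this, the identity map extends to a contractive $*$-homomorphism $\Phi\colon\cEa\to B$; its range is a $C^*$-subalgebra of $B$ that contains the dense subalgebra $C_{\cale}(A)$, hence is dense and, being closed, equal to all of $B$. Thus $\Phi$ is the desired surjection, and only the norm estimate requires work.

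To prove the estimate I first record the relevant structure. Viewing $A$ as a groupoid, every arrow has equal source and target, so $C_{\cale}(A)$ is \emph{commutative} under the fibrewise convolution on the abelian groups $A_x$, and $\pi^x$ is unitarily the convolution representation of $A_x\cong\R^d$ on $L^2(A_x)$; in particular $\|\pi^x(f)\|=\|\widehat{f_x}\|_\infty$, the supremum of the fibrewise Fourier transform. Young's inequality shows the $I$-norm is submultiplicative, so the spectral radius $r_I(a):=\lim_n\|a^{*n}\|_I^{1/n}$ is defined for $a\in C_{\cale}(A)$, as is the corresponding $r_B$ in the $C^*$-algebra $B$. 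Since $\|\cdot\|_B\le\|\cdot\|_I$ on $C_{\cale}(A)$ and $f^{*}*f$ is a positive element of $B$,
\[
\|f\|_B^2=\|f^{*}*f\|_B=r_B(f^{*}*f)\le r_I(f^{*}*f).
\]
It therefore suffices to prove the identity $r_I(f^{*}*f)=\|f^{*}*f\|_{\cEa}=\|f\|_{\cEa}^2$.

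Write $h:=f^{*}*f$ and $R:=\|h\|_{\cEa}=\sup_x\|\widehat{h_x}\|_\infty$. Because inversion on $A$ negates the fibre coordinate and convolution is fibrewise, $\|g\|_I=\sup_x\|g_x\|_1$ and $(h^{*n})_x=(h_x)^{*n}$, so $r_I(h)=\lim_n(\sup_x\|h_x^{*n}\|_1)^{1/n}$. The bound $\|\widehat{g}\|_\infty\le\|g\|_1$ gives $\|h_x^{*n}\|_1\ge\|\widehat{h_x}\|_\infty^n$, whence $r_I(h)\ge R$ at once. The reverse inequality is the heart of the matter, and the main obstacle: it amounts to interchanging $\sup_x$ and $\lim_n$. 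For each fixed $x$ one has $\|h_x^{*n}\|_1^{1/n}\to\|\widehat{h_x}\|_\infty\le R$ by amenability of $\R^d$, but this convergence must be made \emph{uniform} in $x$. Here the bounded geometry of $A$ is decisive: every fibre $(A_x,\langle\cdot,\cdot\rangle_x)$ is isometric to a fixed Euclidean $\R^d$, the entourage support of $f$ provides a common radius $\rho$ with $\supp(h_x^{*n})\subseteq B_{2n\rho}$, and boundedness of $f$ yields a uniform constant $M:=\sup_x c_d\|f_x\|_2^2<\infty$. Using $0\le\widehat{h_x}=|\widehat{f_x}|^2\le R$ together with Plancherel, the inequality $\widehat{h_x}^{\,2n}\le R^{2n-1}\widehat{h_x}$ gives the uniform $L^2$ bound $\sup_x\|h_x^{*n}\|_2\le R^{n-1/2}M^{1/2}$; since $\vol(B_{2n\rho})=\omega_d(2n\rho)^d$ is polynomial in $n$, Cauchy--Schwarz yields $\sup_x\|h_x^{*n}\|_1\le R^{n}P(n)$ with $P(n)^{1/n}\to1$. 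Therefore $r_I(h)\le R$, completing the identity and hence the lemma. (If the rank of $A$ varies over the components of $\sfG_0$ one runs this estimate componentwise, with $d$ replaced by a uniform bound on $\rank A$.)
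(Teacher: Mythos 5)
Your proof is correct, and it takes a genuinely different route from the paper's. You reduce the lemma to the single inequality $\|f\|_B\le\|f\|_{\cEa}$ and establish it globally by a spectral-radius computation in the commutative fibrewise convolution algebra: with $h=f^**f$, you have $\|f\|_B^2=r_B(h)\le r_I(h)$, and you bound $r_I(h)\le\sup_x\|\widehat{h_x}\|_\infty=\|f\|_{\cEa}^2$ by combining Plancherel, the pointwise bound $0\le\widehat{h_x}\le R$, the uniform entourage support (so $h_x^{*n}$ lives in a ball of radius $2n\rho$, whose volume grows only polynomially in $n$), and the uniform bound $\sup_x\|f_x\|_2<\infty$. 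The paper instead proves that $\cEa$ is the enveloping $C^*$-algebra of the Banach $*$-algebra $I_\cale(A)$: it localizes over relatively compact trivializing charts $U$, identifies the corresponding subalgebras with $C_0(U,L^1(\R^d))$, shows every irreducible representation of such a piece factors through evaluation at a single fibre (so the local enveloping algebra is $C_0(U,C^*(\R^d))$), and then patches these local statements together using a cover $\sfG_0=V_0\cup\cdots\cup V_m$ by disjoint unions of charts and a partition of unity, obtaining $\|\pi(f)\|\le(m+1)\|f\|_{\cEa}$ for every representation $\pi$ of $I_\cale(A)$. Both arguments ultimately rest on commutativity and Fourier theory of the vector-group fibres, but yours is more elementary and global: it needs no local triviality of $A$, no paracompactness or partition-of-unity bookkeeping, and no analysis of irreducible representations, and it yields the contractive bound directly, whereas the paper's constant $(m+1)$ must afterwards be removed by the standard $C^*$-power trick. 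What the paper's route buys is structural information about representations of $I_\cale(A)$ (they are fibred over $\sfG_0$ in a precise sense), which is of independent interest. Two cosmetic points in your write-up: the case $f=0$ should be set aside as trivial, since the factor $R^{-1/2}$ in your $P(n)$ is undefined when $R=0$; and the lower bound $r_I(h)\ge R$ is not actually needed for the lemma.
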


\begin{proof}
Let $I_{\mathcal{E}}(A)$ denote the completion of $C_\mathcal{E}(A)$ for the $I$-norm.  We will prove the lemma by showing that $C^*_\mathcal{E}(A)$ is the enveloping $C^*$-algebra of the Banach $*$-algebra $I_{\mathcal{E}}(A)$.  

Let $U$ be a relatively compact open subset of $\sfG_0$ over which $A$ is trivialized, and $A_U$ the restriction of $A$ to $U$, so $A_U$ is diffeomorphic to $U\times \R^d$ where $d$ is the fiber dimension of $A$.  Then it is straightforward to check that the closure of 
$$
\{f\in C_{\mathcal{E}}(A)~|~f|_{A\setminus A_U}=0\}
$$
inside $I_{\mathcal{E}}(A)$ identifies isometrically via the diffeomorphism mentioned above with $C_0(U,L^1(\R^d))$.  We will start by showing that the enveloping $C^*$-algebra of $C_0(U,L^1(\R^d))$ is $C_0(U,C^*(\R^d))$.  For this, it suffices to show that any (non-zero) irreducible $*$-representation $\pi$ of $C_0(U,L^1(\R^d))$ extends to $C_0(U,C^*(\R^d))$.

We first claim that there exists $x\in U$ such that for every open set $V$ containing $x$ there exists $f\in C_0(U,L^1(\R^d))$ supported in $V$ such that $\pi(f)\neq 0$.  Indeed, as $\pi$ is non-zero and continuous for the $I$-norm there must exist $g\in C_c(U,L^1(\R^d))$ such that $\pi(g)\neq 0$.  If the claim fails, then there is an open cover $\mathcal{U}$ of $U$ such that $\pi(f)=0$ whenever $f\in C_0(U,L^1(\R^d))$ is supported in an element of $\mathcal{U}$.  However, as the support of $g$ is compact, we may use a partition of unity to write $g$ as a finite sum of functions supported in elements of $\mathcal{U}$, which contradicts that $g$ is non-zero.

Let then $x\in U$ satisfy the condition in the claim above.  We claim next that the kernel of $\pi$ factors through the homomorphism $\epsilon_x:C_0(U,L^1(\R^d))\to L^1(\R^d)$ defined by evaluating at the fiber over $x$.  If not, then there is some $f\in C_0(U,L^1(\R^d))$ with $f(x)=0$, but $\pi(f)\neq 0$.  As $\pi$ is continuous for the $I$-norm, we may assume that there is a neighborhood $V\owns x$ such that $f$ is zero when restricted to $V$.  Let $g\in C_0(U,L^1(\R^d))$ be supported in $V$ and such that $\pi(g)\neq 0$.  As the supports of $f$ and $g$ are disjoint, the ideals they generate in $C_0(U,L^1(\R^d))$ are orthogonal.  It is therefore impossible that both $\pi(f)$ and $\pi(g)$ are non-zero by irreducibility of $\pi$; this contradiction establishes the clam.

To complete the proof that the enveloping algebra of $C_0(U,L^1(\R^d))$ is $C_0(U,C^*(\R^d))$, note that we now know that $\pi$ factors as a composition $\rho\circ \epsilon_x$, where $\rho$ is an irreducible continuous $*$-representation of $L^1(\R^d)$.  The result follows as the enveloping $C^*$-algebra of $L^1(\R^d)$ is $C^*(\R^d)$, by definition of the latter algebra.\\

Continuing, let $V$ be an open subset of $\sfG_0$ given as a disjoint union $V=\bigsqcup_{i\in I}U_i$ where each $U_i$ is an open relatively compact subset of $\sfG_0$ over which $A$ is trivialized.  Let $A_V$ denote the restriction of $A$ to $V$, so $A_V$ is diffeomorphic to $V\times \R^n$.  Then the $I$-norm completion of 
$$
\{f\in C_{\mathcal{E}}(A)~|~f|_{A\setminus A_V}=0\}
$$
identifies with a Banach $*$-subalgebra, say $B_V$, of the Banach $*$-algebra
$$
\prod_{i\in I}C_0(U_i,L^1(\R^d))
$$
of bounded sequences of elements of the different $C_0(U_i,L^1(\R^d))$ with the norm given by the supremum over $i\in I$ of the norms on the individual $C_0(U_i,L^1(\R^d))$.  Let $\pi$ be any $*$-representation of $B_V$, and let $\pi_i$ denote the restriction of $\pi$ to the $i^\text{th}$ factor.  Then orthogonality of the factors implies that for any $(f_i)\in B_V\subseteq \prod_{i\in I}C_0(U_i,L^1(\R^d))$, 
$$
\|\pi((f_i))\|= \sup_{i\in I}\|\pi_i(f_i)\|\leq \sup_{i\in I}\|f_i\|_{C_0(U_i,C^*(\R^d))}
$$
by what we have already proven.  Hence the enveloping $C^*$-algebra of $B_V$ has norm dominated by the norm from
$$
\prod_{i\in I}C_0(U_i,C^*(\R^d)),
$$
which is in turn clearly equal to the norm $B_V$ inherits as a $*$-subalgebra of $C^*_{\mathcal{E}}(A)$.

To complete the proof, let $f\in I_{\mathcal{E}}(A)$ be arbitrary, and $\pi$ be a $*$-representation of $I_{\mathcal{E}}(A)$.  Say $\sfG_0$ has dimension $m$.  Then using paracompactness, we may write $\sfG_0$ as a union $V_0\cup\cdots \cup V_m$, where each $V_i$ has the same properties as $V$ above.  Using a partition of unity, we may write $f=\sum_{i=0}^m f_i$, where each $f_i$ is supported in $V_i$, and $\|f_i\|_{C^*_\mathcal{E}(A)}\leq \|f\|_{C^*_\mathcal{E}(A)}$ for each $i$.  Hence 
$$
\|\pi(f)\|\leq \sum_{i=0}^m \|\pi(f_i)\|\leq \sum_{i=0}^m \|f_i\|_{C^*_\mathcal{E}(A)}\leq (m+1)\|f\|_{C^*_\mathcal{E}(A)},
$$
where the second inequality follows from what we showed above about the enveloping $C^*$-norm on each $B_{V_i}$.  This shows every $*$-representation of $I_{\mathcal{E}}(A)$ is continuous for the $C^*_{\mathcal{E}}(A)$ norm, completing the proof.
\end{proof}

\begin{prop}\label{lem:ker-ev0} 
There is a canonical short exact sequence
$$
\xymatrix{ 0 \ar[r] & C_0(0,1]\otimes C^*_{\mathcal{E}}(\sfG)\ar[r] & C^*_{\mathcal{E}}(\mathcal{T}\sfG) \ar[r]^{ev_0} & C^*_{\mathcal{E}}(A)\ar[r] & 0 }
$$
(where `$\ \otimes$' denotes the spatial tensor product of $C^*$-algebras).
\end{prop}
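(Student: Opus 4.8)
The sequence is automatically exact at its two outer terms: $ev_0$ is surjective by the discussion around \eqref{ev homs}, and since an injective $*$-homomorphism of $C^*$-algebras is isometric, the induced map $C^*_\mathcal{E}(\mathcal{T}\sfG)/\ker(ev_0)\to C^*_\mathcal{E}(A)$ is automatically an isomorphism onto $C^*_\mathcal{E}(A)$. Thus the whole content is to identify $\ker(ev_0)$ with $C_0(0,1]\otimes C^*_\mathcal{E}(\sfG)$. My plan is to exhibit $C^*_\mathcal{E}(\mathcal{T}\sfG)$ as a field of $C^*$-algebras over the interval $[0,1]$ (the second factor of the unit space $\sfG_0\times[0,1]$) whose fibre over $0$ is $C^*_\mathcal{E}(A)$ and whose fibres over $(0,1]$ are copies of $C^*_\mathcal{E}(\sfG)$, and then to trivialize this field over $(0,1]$.

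First I would record the norm formula. The regular representations $\pi^{(x,\lambda)}$ of Definition \ref{def:red roe} are indexed by $\sfG_0\times[0,1]$, and because $\mathcal{T}\sfG$ is a bundle of groupoids over $[0,1]$ (composable arrows lie in a common slice), $\pi^{(x,\lambda)}(f)$ depends only on the restriction $f_\lambda$ of $f$ to the $\lambda$-slice. Grouping the supremum over $x$ for each fixed $\lambda$ gives
\[
\|f\|_{C^*_\mathcal{E}(\mathcal{T}\sfG)}=\sup_{\lambda\in[0,1]}\|ev_\lambda(f)\|,
\]
where $ev_\lambda(f)$ denotes the image of $f_\lambda$ in the fibre algebra over $\lambda$. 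In particular $C^*_\mathcal{E}(\mathcal{T}\sfG)$ is a $C[0,1]$-algebra, the action coming from pulling functions of $\lambda$ back to the unit space and multiplying, so $\lambda\mapsto\|ev_\lambda(f)\|$ is upper semicontinuous.

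Next I would identify the fibres over $(0,1]$. For $\lambda>0$ the $\lambda$-slice is the groupoid $\sfG$ with its fibrewise metric rescaled by $1/\lambda^2$ and Haar measure rescaled by a constant $c(\lambda)$. Rescaling a metric by a positive constant preserves the equivalence class of the coarse structure of Proposition \ref{prop:groupoid-entourage}, hence leaves the underlying convolution $*$-algebra unchanged, and a direct computation shows that $f\mapsto c(\lambda)^{-1}f$ is a $*$-isomorphism of the $\lambda$-slice algebra onto $C_\mathcal{E}(\sfG)$ which is isometric for the reduced norms (the measure rescaling multiplies each regular representation by $c(\lambda)$, and this factor is exactly absorbed). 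These isomorphisms $A_\lambda\cong C^*_\mathcal{E}(\sfG)$ depend smoothly on $\lambda\in(0,1]$. Transporting the section $\lambda\mapsto ev_\lambda(a)$ through them, upper semicontinuity at $\lambda=0$ forces $\lim_{\lambda\to 0}\|ev_\lambda(a)\|=0$ for $a\in\ker(ev_0)$, so we obtain an isometric $*$-homomorphism $\ker(ev_0)\to C_0(0,1]\otimes C^*_\mathcal{E}(\sfG)$ by the norm formula. To see it is onto I would check two density statements: the image of the algebraic kernel $\{f\in C_\mathcal{E}(\mathcal{T}\sfG)\mid f|_{\{0\}\times A}=0\}$ is a $C_0(0,1]$-stable $*$-subalgebra with dense fibres, hence dense in $C_0(0,1]\otimes C^*_\mathcal{E}(\sfG)$ by Stone--Weierstrass; and this algebraic kernel is dense in $\ker(ev_0)$, which follows from the $C[0,1]$-module structure by multiplying $a$ by bump functions $\chi_\epsilon$ supported in $[0,\epsilon)$ and equal to $1$ near $0$ and using $\|\chi_\epsilon a\|\to\|ev_0(a)\|$.

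The main obstacle I anticipate is the triviality and continuity of the field over $(0,1]$: one must verify not merely that the rescaled-metric reduced algebras $A_\lambda$ are abstractly isomorphic to $C^*_\mathcal{E}(\sfG)$ but that the scaling isomorphisms $f\mapsto c(\lambda)^{-1}f$ assemble into a continuous trivialization, so that the completed kernel is genuinely $C_0(0,1]\otimes C^*_\mathcal{E}(\sfG)$ and not an algebra of merely upper-semicontinuous sections degenerating as $\lambda\to 0$. This is exactly where the compatibility between the blown-up metrics and the blown-up volumes noted before Lemma \ref{lem:coarse comp} must be used, and it is the step I would treat most carefully.
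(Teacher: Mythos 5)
Your reduction of the problem to identifying $\ker(ev_0)$, the norm formula $\|f\|=\sup_{\lambda}\|ev_\lambda(f)\|$, and the trivialization of the field over $(0,1]$ via the rescaling isomorphisms are all correct, and that trivialization is in fact the routine part. The genuine gap is the step you treat as automatic: the claim that $\lambda\mapsto\|ev_\lambda(a)\|$ is upper semicontinuous, in particular at $\lambda=0$, because $C^*_{\mathcal{E}}(\mathcal{T}\sfG)$ is a $C[0,1]$-algebra. What the general theory of $C[0,1]$-algebras gives is upper semicontinuity of the norms of the \emph{quotient} fibres $B_\lambda:=B/\overline{C_0([0,1]\setminus\{\lambda\})\cdot B}$, where $B=C^*_{\mathcal{E}}(\mathcal{T}\sfG)$. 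Your $ev_\lambda$'s are the \emph{reduced} evaluations, i.e.\ the images of $B$ under the regular representations living over the slice $\lambda$; these are (a priori proper) quotients of the $B_\lambda$'s, and their norm functions need not be upper semicontinuous. At $\lambda=0$, the equality $B_0=C^*_{\mathcal{E}}(A)$ --- equivalently, upper semicontinuity of the reduced evaluation norms at $0$, equivalently $\ker(ev_0)=\overline{C_0((0,1])\cdot B}$ --- \emph{is} the proposition being proven, so your appeal to upper semicontinuity there is circular. The same unproven claim is what your bump-function step $\|\chi_\epsilon a\|\to\|ev_0(a)\|$ secretly uses, and also what your ``isometric $*$-homomorphism $\ker(ev_0)\to C_0(0,1]\otimes C^*_{\mathcal{E}}(\sfG)$'' rests on.

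This is not a removable technicality but an amenability phenomenon: upper semicontinuity at $0$ of the reduced norms says that, in the limit, the regular representations of the slices $\lambda>0$ are weakly contained in the regular representation of the fibre $A$ over $0$ (the reverse inequality, lower semicontinuity, is the easy direction). If the fibre over $0$ were a bundle of nonamenable groups, such a statement could fail, so some input specific to $A$ being a bundle of abelian groups is indispensable. This is exactly what the paper supplies in Lemma \ref{lem:A norm}: using local triviality of $A$, partitions of unity, and the fact that $C^*(\R^d)$ is the enveloping $C^*$-algebra of $L^1(\R^d)$, it shows that $C^*_{\mathcal{E}}(A)$ is the enveloping $C^*$-algebra of the Banach $*$-algebra $I_{\mathcal{E}}(A)$, i.e.\ the reduced norm is the \emph{largest} $C^*$-norm on $C_{\mathcal{E}}(A)$ dominated by the $I$-norm. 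Since the quotient $C^*_{\mathcal{E}}(\mathcal{T}\sfG)/\big(C_0(0,1]\otimes C^*_{\mathcal{E}}(\sfG)\big)$ is another such completion and surjects onto $C^*_{\mathcal{E}}(A)$, maximality forces that surjection to be an isomorphism, which is precisely the missing identification. To repair your proof you would need to establish this maximality (or the equivalent weak-containment/upper-semicontinuity statement at $\lambda=0$); once that is in hand, the rest of your argument goes through.
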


\begin{proof}
Write $I_{\mathcal{E}}(\mathcal{T}\sfG)$ for the completion of $C_{\mathcal{E}}(\mathcal{T}\sfG)$ in the $I$-norm, and similarly write $I_{\mathcal{E}}(\sfG)$ for the completion of $C_{\mathcal{E}}(\sfG)$ in the $I$-norm, and $I_{\mathcal{E}}(A)$ for the completion of $C_{\mathcal{E}}(A)$ in the $I$-norm.  Then it is straightforward to see that there is a short exact sequence
\begin{equation}\label{alg ses}
\xymatrix{ 0 \ar[r] & C_0((0,1],I_{\mathcal{E}}(\sfG))\ar[r] & I_{\mathcal{E}}(\mathcal{T}\sfG) \ar[r]^{ev_0} & I_{\mathcal{E}}(A)\ar[r] & 0 },
\end{equation}
of Banach algebras, where the ideal is defined to consist of continuous functions from $[0,1]$ to $I_{\mathcal{E}}(\sfG)$ that vanish at $0$.  It is clear that the $C^*$-algebra norm the ideal $C_0((0,1],I_{\mathcal{E}}(\sfG))$ inherits from $C^*_{\mathcal{E}}(\mathcal{T}\sfG)$ is given by
$$
\|f\|:=\sup_{\lambda\in(0,1]}\|f(\lambda)\|_{C^*_{\mathcal{E}}(\sfG)},
$$
and thus that the closure of the ideal $C_0((0,1],I_{\mathcal{E}}(\sfG))$ inside $C^*_{\mathcal{E}}(\mathcal{T}\sfG)$ identifies canonically with $C_0(0,1]\otimes C^*_{\mathcal{E}}(\sfG)$.

We thus have a commutative diagram
$$
\xymatrix{ 0 \ar[r] & C_0(0,1]\otimes C^*_{\mathcal{E}}(\sfG)\ar[r] \ar[d]& C^*_{\mathcal{E}}(\mathcal{T}\sfG) \ar[r] \ar@{=} [d]& \frac{C^*_{\mathcal{E}}(\mathcal{T}\sfG)}{C_0(0,1]\otimes C^*_{\mathcal{E}}(\sfG)}\ar[r] \ar[d] & 0 \\ 
0 \ar[r] & J\ar[r] & C^*_{\mathcal{E}}(\mathcal{T}\sfG) \ar[r]^{ev_0} & C^*_{\mathcal{E}}(A)\ar[r] & 0 },
$$
where $J$ is the kernel of the quotient map $ev_0:C^*_{\mathcal{E}}(\mathcal{T}\sfG) \to \cEa$, the left hand vertical map is an injection, and the right hand vertical map a surjection.  Note that the right hand vertical map extends the identity on the dense $*$-subalgebra $C_{\mathcal{E}}(A)$ of the algebras involved, by the existence of the diagram in line \eqref{alg ses}.  As both $\frac{C^*_{\mathcal{E}}(\mathcal{T}\sfG)}{C_0(0,1]\otimes C^*_{\mathcal{E}}(\sfG)}$ and $C^*_{\mathcal{E}}(A)$ are $C^*$-algebra completions for norms on $C_{\mathcal{E}}(A)$ that are dominated the $I$-norm, the surjection must be an isomorphism by Lemma \ref{lem:A norm}.  Hence the left-hand vertical map is also an isomorphism, and we are done.
\end{proof}

Using the $K$-theory exact sequence and Lemma \ref{lem:ker-ev0}, we conclude this subsection with the following corollary.

\begin{cor}\label{prop:k-ext} 
The map
\[
{ev_0}_*: K_\bullet(C^*_{\mathcal{E}}(\mathcal{T}\sfG))\to K_\bullet(C^*_{\mathcal{E}}(A))
\]
induced on $K$-theory by the evaluation-at-zero map is an isomorphism. \qed
\end{cor}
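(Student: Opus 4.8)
The plan is to deduce the statement directly from the short exact sequence established in Proposition \ref{lem:ker-ev0}, combined with the homotopy invariance of $K$-theory. The decisive observation is that the ideal appearing there,
\[
C_0(0,1]\otimes C^*_{\mathcal{E}}(\sfG),
\]
is the \emph{cone} over $C^*_{\mathcal{E}}(\sfG)$: writing $C_0(0,1]=\{f\in C[0,1]\mid f(0)=0\}$, this algebra consists of continuous paths $[0,1]\to C^*_{\mathcal{E}}(\sfG)$ that vanish at the endpoint $0$. Cones are contractible $C^*$-algebras — the rescaling $h_s(f)(\lambda):=f(s\lambda)$ is a homotopy of $*$-homomorphisms from the identity (at $s=1$) to the zero map (at $s=0$) — and homotopy invariance of $K$-theory therefore yields
\[
K_\bullet\big(C_0(0,1]\otimes C^*_{\mathcal{E}}(\sfG)\big)=0.
\]

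First I would write down the six-term cyclic exact sequence in $K$-theory associated to the short exact sequence of Proposition \ref{lem:ker-ev0}. In it, the two $K$-groups of the ideal appear immediately before and after the groups $K_\bullet(C^*_{\mathcal{E}}(\mathcal{T}\sfG))$ and $K_\bullet(C^*_{\mathcal{E}}(A))$, and both vanish by the previous paragraph. Exactness then forces $ev_{0*}$ to sit between zero maps in both degrees, so that $ev_{0*}$ is injective and surjective on each of $K_0$ and $K_1$; hence it is an isomorphism, which is precisely the assertion.

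The substantive content — identifying the $C^*$-level kernel of $ev_0$ as a cone, which rests on the delicate norm comparison of Lemma \ref{lem:A norm} — has already been carried out in Proposition \ref{lem:ker-ev0}. Consequently I expect no genuine obstacle in the argument above: it is a purely formal consequence of that identification together with standard $K$-theory machinery. The one point deserving a moment's care is to confirm that the tensor factor is $C_0(0,1]$ (functions vanishing at $0$) rather than the unital $C[0,1]$, so that the ideal really is the contractible cone; but this is exactly what Proposition \ref{lem:ker-ev0} records, since the ideal consists of functions vanishing at the evaluation point $\lambda=0$.
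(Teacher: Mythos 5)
Your proof is correct and is exactly the argument the paper intends: the paper's one-line justification (``Using the $K$-theory exact sequence and Lemma \ref{lem:ker-ev0}'') is precisely your observation that the ideal $C_0(0,1]\otimes C^*_{\mathcal{E}}(\sfG)$ is a contractible cone with vanishing $K$-theory, so the six-term sequence forces $ev_{0*}$ to be an isomorphism in both degrees. You have simply written out the details the paper leaves implicit.
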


\subsection{The analytic assembly map}

We keep the notation from the previous section: $\sfG$ is a Lie groupoid with $\mathcal{T}\sfG$ the associated tangent groupoid and $A$ the associated Lie algebroid; moreover, $A$ is equipped with a Euclidean structure making each $t$-fiber $\sfG^x$ a complete Riemannian manifold, and all three of these objects are equipped with the associated metric, coarse, and Haar structures.  We assume moreover that the Haar structure is of bounded geometry.   Throughout this section, we also write $A^*$ for the dual vector bundle of $A$, and equip it with the Euclidean structure induced from that on $A$ (the Euclidean structure can be used to identify $A$ and $A^*$, of course, but it is perhaps more natural to keep the distinction). 

Our first task in this section is to identify the Roe $C^*$-algebra $C^*_{\mathcal{E}}(A)$ with an algebra of functions on $A^*$.  Let $C_e(A^*)$ be the $C^*$-algebra of continuous bounded functions from $A^*$ to $\C$ such that:
\begin{enumerate}
\item for each $x\in \sfG_0$, the restriction $f_x$ of $f$ to the fiber $A^*_x$ over $x$ vanishes at infinity;
\item the family $\{f_x\}_{x\in \sf G_0}$ of restrictions is `equicontinuous' in the sense that for any $\epsilon>0$ there exists $\delta>0$ such that if $\xi,\eta\in A^*_x$ satisfy $\|\xi-\eta\|_x<\delta$, then $|f_x(\xi)-f_x(\eta)|<\epsilon$.  
\end{enumerate}

\begin{lem}\label{lem:fourier}
The Roe $C^*$-algebra $C^*_{\mathcal{E}}(A)$ is isomorphic to $C_e(A^*)$.
\end{lem}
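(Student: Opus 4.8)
The plan is to realize the isomorphism concretely as a \emph{fibrewise Fourier transform}. Regard $A$ as the bundle of abelian vector groups, so that for each $x\in\sfG_0$ the $t$-fibre $\sfG^x$ is the vector space $A_x$, the Haar measure $\mu^x$ is the Lebesgue measure of the Euclidean structure, and the convolution of Definition \ref{def:roe} restricts on each fibre to ordinary convolution on $A_x$. For $f\in C_\mathcal{E}(A)$ define $\mathcal F(f)=\widehat f$ by
\[
\widehat f(x,\xi):=\int_{A_x} f(x,v)\,e^{-i\langle\xi,v\rangle}\,d\mu^x(v),\qquad (x,\xi)\in A^*.
\]
Since convolution becomes pointwise product and the involution $f^*(x,v)=\overline{f(x,-v)}$ becomes complex conjugation, $\mathcal F$ is a $*$-homomorphism from $C_\mathcal{E}(A)$ into the bounded functions on $A^*$ with pointwise operations. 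To see it is \emph{isometric} for the $\cEa$-norm and the supremum norm on $A^*$, note that the regular representation $\pi^x$ of Lemma \ref{def:reps} acts on $L^2(A_x,\mu^x)$ by convolution with $f_x$; conjugating by the (Plancherel) unitary Fourier transform $L^2(A_x)\to L^2(A^*_x)$ turns this into multiplication by $\widehat f_x$, so $\norm{\pi^x(f)}=\norm{\widehat f_x}_\infty$. Taking the supremum over $x$ gives $\norm{f}_{\cEa}=\sup_x\norm{\widehat f_x}_\infty=\norm{\widehat f}_\infty$.

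Next I would check that $\mathcal F$ really lands in $\cea$. Each $f\in C_\mathcal{E}(A)$ is bounded and supported in some entourage $E_r=\{\norm{v}_x<r\}$, so each $f_x$ is a bounded, compactly supported (hence $L^1$) function on $A_x$; Riemann--Lebesgue gives $\widehat f_x\in C_0(A^*_x)$, and joint continuity in $(x,\xi)$ follows from continuity of $f$ and dominated convergence. The equicontinuity condition is the one genuinely quantitative point: using $\abs{e^{-i\langle\xi,v\rangle}-e^{-i\langle\eta,v\rangle}}\le\norm{\xi-\eta}_x\,\norm{v}_x$ and the uniform tube bound $\norm{v}_x<r$ on the support,
\[
\abs{\widehat f_x(\xi)-\widehat f_x(\eta)}\le\int_{A_x}\abs{f(x,v)}\,\norm{\xi-\eta}_x\,\norm{v}_x\,d\mu^x(v)\le C\,\norm{f}_\infty\,r\,\norm{\xi-\eta}_x,
\]
where $C$ bounds the volume of the $r$-ball uniformly in $x$ (here the bounded geometry of the Haar system enters). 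The right-hand side is independent of $x$, which is exactly the required equicontinuity. Finally $\cea$ is complete, being a uniformly closed $*$-subalgebra of $C_b(A^*)$, so $\mathcal F$ extends to an isometric, hence injective, $*$-homomorphism $\cEa\to\cea$.

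It remains to prove \emph{surjectivity}, i.e. density of the image, and this is the crux. Fibrewise density is immediate from $C^*(A_x)\cong C_0(A^*_x)$: Fourier transforms of compactly supported functions are dense in $C_0(A^*_x)$. The problem is to make the approximation uniform in $x$ while keeping all inverse transforms in a \emph{single} tube $E_r$, so that the approximant is a genuine element of $C_\mathcal{E}(A)$. For $F\in\cea$ whose fibrewise inverse transform $\check F$ happens to be an honest bounded continuous fibrewise-$L^1$ family, the truncation $f:=\check F\cdot\chi_{\{\norm{v}_x<r\}}$ lies in $C_\mathcal{E}(A)$ and $\widehat f\to F$ uniformly as $r\to\infty$, the uniformity in $x$ coming from the uniform tube and volume bounds. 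The general case reduces to this by a smoothing: convolving fibrewise with a Fej\'er-type approximate identity $K_r$ (whose inverse transform $\check K_r$ is supported in $\{\norm{v}_x<r\}$ and with $\sup_x\norm{K_{r,x}}_{L^1}=1$) replaces $F$ by $F\ast K_r$, and the equicontinuity of $\{F_x\}$ — an $x$-independent modulus of continuity — forces $F\ast K_r\to F$ uniformly on $A^*$.

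I would produce the required nice approximants over relatively compact trivialising charts $A^*\cong U\times\R^{d*}$, where the situation is the honest-function case and where the local enveloping-algebra description underlying Lemma \ref{lem:A norm} applies, and then patch them with a partition of unity on $\sfG_0$. The step I expect to be hardest to make fully rigorous is precisely this patching: one must keep both the tube radius $r$ and the approximation error uniform across a possibly infinite cover, and it is exactly the equicontinuity hypothesis defining $\cea$ that makes a single $r$ and a single error bound suffice. Once density is established, the isometric extension $\cEa\to\cea$ is a surjective isometry, hence the desired isomorphism.
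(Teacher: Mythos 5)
Your construction of the forward map is exactly the paper's: the fibrewise Fourier transform $\phi\colon C_{\mathcal{E}}(A)\to \cea$, shown isometric by conjugating each regular representation $\pi^x$ by the Plancherel unitary (so that $\|\pi^x(f)\|=\|\widehat{f}_x\|_\infty$), with the image landing in $\cea$ because tube-supported kernels have fibrewise-$C_0$, equicontinuous transforms. Up to the level of detail supplied, this half of your argument and the paper's coincide.

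The density step is where you diverge, and your route has concrete gaps. First, the truncation claim does not hold as stated: to get $\widehat{f}\to F$ uniformly you need the tails $\int_{\|v\|_x\ge r}|\check{F}_x(v)|\,d\mu^x(v)$ to tend to $0$ \emph{uniformly in $x$}, which does not follow from ``bounded, continuous, fibrewise $L^1$''; moreover the sharp cutoff $\chi_{\{\|v\|_x<r\}}$ produces a discontinuous function, hence not an element of $C_{\mathcal{E}}(A)$. Second, the chart-by-chart construction with partition-of-unity patching that you single out as the hardest step is not merely hard --- it is unnecessary, and recognizing this is the whole of the paper's density argument. Since your $K_r$ (the paper's $\widetilde{g_\delta}$) is built from one fixed radial function on $\R^d$ with compactly supported inverse transform, transplanted to every fibre via the Euclidean structure, the fibrewise inverse transform of $F\ast K_r$ equals $\check{F}_x\cdot \check{K}_{r,x}$ and is therefore supported, for \emph{all} $x\in\sfG_0$ simultaneously, in one fixed tube, namely the support tube of $\check{K}_r$. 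Thus $h_r$, defined as the fibrewise inverse transform of $F\ast K_r$, is already the required global element of $C_{\mathcal{E}}(A)$, and $\phi(h_r)=F\ast K_r\to F$ uniformly by equicontinuity: no truncation, no charts, no patching. (The one point that does deserve attention, in your write-up and arguably also in the paper's, is why this fibrewise inverse transform is a bounded \emph{continuous} function rather than merely a compactly supported distribution; taking $\check{K}_r$ smooth, this is where your ``honest fibrewise-$L^1$ family'' hypothesis has to be discharged.) With the detour deleted, your Fej\'er paragraph, carried out globally, is precisely the paper's proof.
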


\begin{proof}
For each $x\in \sfG_0$ recall that $A_x$ denotes the fiber of $A$ over $x$, and let
$$
\calf:C^*(A_x)\to C_0(A^*_x),\quad f\mapsto \calf(f)
$$
denote the Fourier isomorphism.  
For each $r>0$, let $C_{\mathcal{E},r}(A)$ denote the subspace of $C_{\mathcal{E}}(A)$ consisting of functions supported in  
$$
A^{(r)}:=\{(x,v)\in A~|~x\in \sfG_0,~v\in A_x,~\|v\|_x< r\}.
$$ 
Let $C_{b}(A^*)$ denote the $C^*$-algebra of bounded continuous functions on $A^*$.  Then the formula
$$
\{f_x\}_{x\in \sfG_0}\mapsto \{\calf(f_x)\}_{x\in \sfG_0}
$$
defines a $*$-homomorphism
$$
C_{\mathcal{E}}(A)\to C_{b}(A^*).
$$
Moreover, the fact that an element on the left is supported in some $A^{(r)}$ implies by standard Fourier theory that the family $\{\calf(f_x)\}$ is equicontinuous and vanishes at infinity, whence the image is actually in $C_e(A^*)$.  Standard facts about the Fourier transform imply moreover that this map is isometric for the norm induced on the left hand side by $\cEa$, and thus it extends to an injective $*$-homomorphism $\phi:\cEa\to \cea$.  It remains to show that the image of $\phi$ is dense. 

For this, let $f$ be an element of $C_e(A^*)$, which we write as a family $\{f_x\}_{x\in \sfG_0}$ of restrictions to fibers.  Let $d$ be the fiber dimension of $A$, and let $g$ be a smooth, positive function on $\R^d$ with integral one, that only depends on the norms of elements, and that has compactly supported inverse Fourier transform.  For $\delta>0$, define $g_\delta(\xi)=\delta^{-d}g(\delta \xi)$.  Note that as each $g_\delta$ depends only on the norm on $\R^d$, we may define a function $\widetilde{g_\delta}:A^*\to \C$ by the formula $(x,\xi)\mapsto g_\delta(\|\xi\|_x)$ in the obvious sense, and that $\widetilde{g_\delta}$ thus defined is an element of $C_e(A^*)$.

Now, note that for each $\delta>0$ we may define an element $f*\widetilde{g_\delta}$ of $C_e(A^*)$ by taking the fiberwise convolution.  Equicontinuity of the family $\{f_x\}$ implies that 
$$
\lim_{\delta\to 0}\|f*\widetilde{g_\delta}-f\|_{C_e(A^*)}=0.
$$
Let now $h_\delta:A\to \C$ be given fiberwise as the inverse Fourier transform of $f*\widetilde{g_\delta}$, and note that each $h_\delta$ is an element of $C_{\mathcal{E}}(A)$ by compact support of the inverse Fourier transform of $g$ (and hence of each $g_\delta$).  We then have
$$
\lim_{\delta\to 0}\|\phi(h_\delta)-f\|=\lim_{\delta\to 0}\|f*\widetilde{g_\delta}-f\|_{C_e(A^*)}=0,
$$
which shows that the image of $\phi$ is dense as claimed.  
\end{proof}
 
Lemma \ref{lem:fourier} shows that $C^*_{\mathcal{E}}(A)$ is isomorphic to $C_e(A^*)$ and thus ${ev_0}_*$ can be lifted to an isomorphism
\[
{ev_0}_*: K_\bullet(C^*_{\mathcal{E}}(\mathcal{T}\sfG))\to K_\bullet(C_e(A^*)). 
\]
Combining the above map with the evaluation map ${ev_1}_*$, we are finally able to define our analytic index map.

\begin{defn}\label{def:aind}
Let $\sfG$ be a groupoid equipped with the usual additional metric, coarse, and measure structures.  The \emph{analytic index map} is the homomorphism defined by
\begin{equation}\label{eq:ana-index}
\Ind_a:={ev_1}_*\circ {ev_0}_*^{-1}:K_\bullet(C_e(A^*))\to K_\bullet(C^*_{\mathcal{E}}(\sfG))).
\end{equation}
\end{defn}

\section{Quantization and index map}\label{quant sec}

Throughout this section, $\sfG$ is a Lie groupoid and $A$ the associated Lie algebroid.  $A$ is assumed equipped with a Euclidean structure, and $\sfG$ and $A$ with an associated coarse structure and bounded geometry Haar system as in Proposition \ref{prop:groupoid-entourage}, Example \ref{rem:alg coarse2} and Example \ref{ex:rie haar}.   Moreover, $\calt\sfG$ is the tangent groupoid of $\sfG$, equipped with the corresponding coarse and measure structures as in Section \ref{tan gpd sec}.

We will study the behaviour of the analytic index map of Definition \ref{def:aind} on elements of $K_0(C_e(A^*))$ with particularly good representatives as in Definition \ref{dfn:geometry-K}.  For such elements, we are able to compute the analytic index by the idea of quantization, and thus show that it agrees with a more classical notion of index defined using pseudo-differential operators.    Throughout, we work for simplicity with the case of $K_0$ only, although it should certainly be possible to extend the results to $K_1$.   

\subsection{Smooth Symbols for $K_0(C_e(A^*))$}\label{subsubsec:symbol}  

In order to carry out our quantization program, we will need to work with a restricted class of cycles for $K_0(C_e(A^*))$; we now describe some necessary preliminaries.  Compare the discussion in \cite[Section 4.2]{Pf-Po-Ta:lie-groupoid-index} for a treatment of similar geometric issues.

Recall that the Lie algebroid $A$ is equipped with a smooth Euclidean metric $\{\langle\ ,\ \rangle_x\}_{x\in \sfG_0}$, and that for each $x\in \sfG_0$, the tangent space $T\sfG^x$ to the $t$-fiber $\sfG^x$  is equipped with the associated Riemannian structure as discussed in Section \ref{metric subsec}.  Let $t_*:T\sfG\to T\sfG_0$ be the differential of the target map and define $T_t\sfG:=\text{ker}(t_*)$, a subbundle of $T\sfG$.  Let $T_t^*\sfG$ be the dual bundle to $T_t\sfG$; in other words, $T_t^*\sfG$ is the smooth bundle over $\sfG$ whose restriction to each $t$-fiber $\sfG^x$ is the cotangent bundle $T^*\sfG^x$.  Note that $\sfG$ acts freely and properly on the right of $T_t^*\sfG$, and the quotient identifies canonically with $A^*$; on the other hand, $A^*$ also identifies canonically with the restriction $T_t^*\sfG|_{\sfG_0}$ of $T_t^*\sfG$ to the unit space.  

Let $\tilde{t}:T_t^*\sfG\to \sfG_0$ be the map that takes each cotangent bundle $T^*\sfG^x$ to $x$, and note that $\tilde{t}$ is a smooth submersion as $t:\sfG\to \sfG_0$ is.  Let $\calf$ be the integrable subbundle of $TT_t^*\sfG$ corresponding to the regular foliation of $T_t^*\sfG$ by the fibers of $\tilde{t}$; in other words, each restriction $\calf|_{T^*\sfG^x}$ is the tangent bundle to $T^*\sfG^x$.  

Let now $W_1$, $W_2$ be smooth vector bundles over $A^*$ equipped with Euclidean structures, and let $\sigma:W_1\to W_2$ be a bundle endomorphism.  Using that the quotient of $T^*_t\sfG$ for the $\sfG$-action is $A^*$, we can lift this data to $\sfG$-equivariant bundles and a $\sfG$-equivariant bundle map $\widehat{\sigma}:\widehat{W}_1\to \widehat{W}_2$ over $T^*_t\sfG$; note that the Euclidean structures on $W_1$ and $W_2$ also lift to Euclidean structures on $\widehat{W}_1$ and $\widehat{W}_2$.  Let $\Gamma(V)$ denote the space of smooth sections of a vector bundle $V$.  For each $x$, note that $T^*\sfG^x$ inherits a Riemannian structure from $\sfG^x$, and let $\nabla^x$ be the associated Levi-Civita connection, so in particular $\nabla^x$ defines a map
$$
\nabla^x : \Gamma(\calf|_{T^*\sfG^x})\otimes \Gamma(\text{Hom}(\widehat{W}_1,\widehat{W}_2)|_{T^*\sfG^x})\to \Gamma(\text{Hom}(\widehat{W}_1,\widehat{W}_2)|_{T^*\sfG^x}),
$$
recalling that $\calf|_{T^*\sfG^x}=TT^*\sfG^x$.  Let $\nabla$ be the map
$$
\nabla : \Gamma(\calf)\otimes \Gamma(\text{Hom}(\widehat{W}_1,\widehat{W}_2))\to \Gamma(\text{Hom}(\widehat{W}_1,\widehat{W}_2))
$$
induced by combining all the maps $\nabla^x$.  We use $\nabla^k\widehat{\sigma}$  to denote the $k$th-order directional derivatives of the symbol $\widehat{\sigma}$, which is a bundle map
$$
\nabla^k \widehat{\sigma}:=\underbrace{\nabla \circ \cdots \circ \nabla }_k \widehat{\sigma}:\otimes ^k \calf\to \Hom(\widehat{W}_1, \widehat{W}_2)
$$ 
Let $\mathbb{S}(\otimes^k  \calf)$ denote the unit sphere bundle of $\otimes^k \calf$, and for $m\in \N$ define 
\[
N_m(\widehat{\sigma}):=\operatorname{sup}_{0\leq k\leq m} \sup_{v\in\mathbb{S}(\otimes^k  \calf)}\|\nabla^k\widehat{\sigma}(v)\|_{\Hom(\widehat{W}_1, \widehat{W}_2)}
\]
where the norm on $\Hom(\widehat{W}_1, \widehat{W}_2)$ is induced by the $\sfG$-invariant Euclidean structures on $\widehat{W}_1$ and $\widehat{W}_2$.

We now define the symbol data we will use.  Let $A^*_{\mathcal{E}}$ denote the Gelfand spectrum of the commutative $C^*$-algebra $C_e(A^*)$, which is a locally compact Hausdorff space.  Note that $C_e(A^*)$ contains $C_0(A^*)$ as an essential ideal, and therefore $A^*_{\mathcal{E}}$ contains $A^*$ as a dense open subset.   Let $\pi_{A^*}:A^*\to \sfG_0$ denote the bundle projection for $A^*$.  Abusing notation, if $V$ is a bundle over $\sfG_0$, we will write $\widehat{V}$ for the $\sfG$-equivariant bundle $\widehat{\pi_{A^*}^*V}$ over $T^*_t\sfG$ defined above.

\begin{defn}\label{dfn:geometry-K} 
A \emph{smooth symbol} for $C_e(A^*)$ is a quadruple $(V_1,V_2,\sigma,\tau)$ that satisfies the following conditions.
\begin{enumerate}
\item $V_1$ and $V_2$ are smooth complex vector bundles over $\sfG_0$ such that the pullbacks $\pi_{A^*}^*V_1$ and $\pi_{A^*}^*V_2$ are restrictions of bundles over the one point compactification of $A^*_{\mathcal{E}}$.
\item $\sigma:\pi_{A^*}^*V_1\to \pi_{A^*}^*V_2$ and $\tau:\pi_{A^*}^*V_2\to \pi_{A^*}^*V_1$ are bundle endomorphisms that come as restrictions of bundle endomorphisms on the one-point compactification of $A^*_{\mathcal{E}}$. 
\item Thinking of $\sigma$ and $\tau$ as bundle endomorphisms on the one-point compactification of $A^*_{\mathcal{E}}$, the compositions $\sigma\circ \tau$ and $\tau\circ \sigma$ are equal to the identity in some neighbourhood of the point at infinity.
\item The norms $N_m(\widehat{\sigma})$ and $N_m(\widehat{\tau})$ are finite for all $m$.
\end{enumerate}
\end{defn}

We will need the following remarks about these symbol classes.

\begin{rem}\label{rem:symbol}
Let $(V_1,V_2,\sigma,\tau)$ denote a smooth symbol for $C_e(A^*)$.
\begin{enumerate}
\item Using standard techniques in topological $K$-theory (compare for example \cite[Proposition (A. I)]{segal:k-theory} or the discussion in \cite[pages 491-492]{Atiyah-Singer:I}), we see that any smooth symbol $(V_1, V_2, \sigma,\tau)$ defines a class $[V_1, V_2, \sigma,\tau]$ in $K_0(C_e(A^*))$.    We do not know if any class in $K_0(C_e(A^*))$ can be represented by a smooth symbol, but this seems unlikely. 
\item The choice of `partial inverse' $\tau$ does not affect the resulting $K$-theory class; we will sometimes abusively write a smooth symbol as a triple $(V_1,V_2,\sigma)$ when the choice of $\tau$ does not matter.  
\item \label{hom rem} As $\sigma$ and $\tau$ are mutually inverse on some neighbourhood of the point at infinity, they are mutually inverse outside some neighbourhood of the zero section in $A^*$ which has compact closure in the one-point compactification of $A^*_{\mathcal{E}}$.  Replacing $\sigma$ and $\tau$ with functions that are homogeneous of degree zero on the fibers of $A^*$ outside of this neighbourhood does not affect the resulting $K$-theory class (compare \cite[pages 491-492]{Atiyah-Singer:I} again); we will sometimes assume homogeneity of this sort.
\item \label{comp rem} Let $\overline{\sfG_0}$ denote the closure of $\sfG_0$ in the one-point compactification of $A^*_{\mathcal{E}}$, so $\overline{\sfG_0}$ is some compact Hausdorff space.  Writing $V$ for $V_1$ or $V_2$, we know that $\pi_{A^*}^*V$ is the restriction of some bundle on the one-point compactification of $A^*_{\mathcal{E}}$, and therefore $V=\pi_{A^*}^*V|_{\sfG_0}$ is the restriction of some bundle on the compact space $\overline{\sfG_0}$.  In particular, there is a (smooth) bundle $W$ on $\sfG_0$ with $V\oplus W$ trivializable.  
\end{enumerate}
\end{rem}

\subsection{Quantization}
Let $(V_1,V_2,\sigma)$ be a smooth symbol with associated class $[V_1,V_2,\sigma]\in K_0(C_e(A^*))$.  Our goal is to describe a geometric realization of $\Ind_a([V_1, V_2, \sigma])$ by the idea of quantization. 

We will need to make the following assumptions on the family $\{\sfG^x\}_{x\in \sfG_0}$ of Riemannian manifolds, which will be in force throughout this subsection.  As the Riemannian metrics on this family are determined by the Euclidean structure on $A$, they are (indirectly) just assumptions about the latter structure.  

\begin{assump}\label{ass:kappa}
\begin{enumerate}
\item The associated metric spaces $(\sfG^x, d^x)$ are proper for every $x\in \sfG^0$.
\item There is $\iota>0$ such that for all $x\in \sfG_0$, the injectivity radius of $\sfG^x$ is bounded from below by $\iota$.
\item Let $R^\nabla_x$ be the Riemannian curvature tensor associated to the Riemannian metric on $\sfG^x$.  For any $k\geq 0$, there is $\kappa_k>0$ such that $k$th-order directional derivatives of $R^\nabla_x$ are bounded by $\kappa_k$ for all $x\in \sfG_0$, i.e.
\[
\sup_{v\in\mathbb{S}(\otimes^k  T\sfG^x)} ||\nabla^k R^\nabla_x(v)||_{\otimes ^4 T^*\sfG^x}\leq \kappa_k, \forall x\in \sfG_0
\]
(note that this implies via the discussion of Example \ref{ex:rie bg} that the Haar system on $\sfG$ has bounded geometry in the sense of Definition \ref{def:bg}).
\end{enumerate}
\end{assump}

The above Assumption \ref{ass:kappa} on uniformly bounded derivatives of the Riemannian curvature tensor implies that there is $\kappa>0$ such that for all $x\in \sfG_0$, all sectional curvatures of $\sfG^x$ are in $[-\kappa,\kappa]$. 

Choose $R>0$ and $r\in (0, \iota)$ such that $r+R<\iota$; these constants will be fixed throughout.  Lemma \ref{lem:cover} and Assumption \ref{ass:kappa} (more precisely the boundedness of the sectional curvature) imply that there is an integer $m>0$ such that for each $x\in \sfG_0$ there is an open cover $\mathcal{U}^x$ of $\sfG^x$ by balls of radius $r$ such that for any $U_0\in \mathcal{U}^x$ there are at most $m$ balls $U\in \mathcal{U}^x$  with 
\[
U\cap N_R(U_0)\ne \varnothing,
\]
where $N_R(U_0):=\{y\in \sfG^x\mid d(y, U_0)<R\}$.   Note that as $r<\iota$, every $U\in \mathcal{U}^x$ is diffeomorphic to $\R^d$, where $d$ is the dimension of $\sfG^x$.

For each $x\in \sfG_0$, consider the exponential map $\exp_x: A_x\to \sfG^x$ with respect to the Riemannian metric $\langle\ ,\ \rangle_x$.
Choose $R'$ with $\iota>R>R'>0$. We can find a smooth function $\chi: A\to \mathbb{R}_{\geq 0}$ which is zero outside $\{(x,v)\in A\mid \|v\|_x<R\}$ and equal to one on $\{(x,v)\in A\mid \|v\|_x<R'\}$. By the exponential map $\exp$, $\chi$ lifts to a smooth function $\tilde{\chi}$ on $\sfG$ with the property that for any $x\in \sfG_0$, $\tilde{\chi}|_{\sfG^x}$ is supported inside the $R$-ball centered at $x$ in the $t$-fiber $\sfG^x$.  Let $\Gamma(T\sfG^x)$ denote the sections of $T\sfG^x$ and let 
$$
\nabla^x:\Gamma(T\sfG^x)\otimes C^\infty(\sfG^x)\to C^\infty(\sfG^x)
$$ 
be the usual directional derivative operator on the Riemannian manifold $\sfG^x$; recalling that $T_t\sfG:=\ker(t_*:T\sfG\to T\sfG_0)$, let  
$$
\nabla:\Gamma(T_t\sfG)\otimes C^\infty(\sfG)\to C^\infty(\sfG)
$$
be defined by combining the various $\nabla^x$.  Then if $d$ is the dimension of $\sfG^x$ (equivalently, the rank of $A^*$), we can use the fact that $R$ and $R'$ are fixed constants and Assumption \ref{ass:kappa} on uniformly bounded derivatives of the Riemannian curvature tensor, to assume that $\nabla^k \tilde{\chi}$ is uniformly bounded on the unit sphere $\mathbb{S}(\otimes^k T_t\sfG)$ of $T_t\sfG$ for $k=1,\cdots, 2d+1$.

Given a smooth section $\xi$ of $A^*$,  define $e_\xi:\sfG\to\C$ by 
\begin{equation}\label{eq:funct-e}
e_\xi(g):=\tilde{\chi}(g)\exp\Big(2\pi \sqrt{-1}\langle\xi(t(g)), \exp^{-1}_{t(g)}(g)\rangle\Big)
\end{equation}
By the definition of $\tilde{\chi}$, $e_\xi$ is a smooth function on $G$ such that for each $x\in \sfG_0$, the restriction $e_\xi|_{\sfG^x}$ is supported within the $R$-neighborhood of $x\in \sfG^x$.

Let now $(V_1, V_2, \sigma)$ be a smooth symbol as is defined in Definition \ref{dfn:geometry-K}. 
With the above preparation, we are ready to define the quantization $Q^\lambda(\sigma)$.  Recall first the definitions of the metric and measure structures on $\calt\sfG$ from Section \ref{tan gpd sec}, which depend on $\lambda$.  We will write $\langle,\rangle_{x,\lambda}$ for the Euclidean structure on the fiber $A_{x,\lambda}$ of the Lie algebroid of $\calt \sfG$ over $(x,\lambda)\in \sfG_0\times [0,1]$, and $\mu^{x,\lambda}$ for the measure on the $t$-fiber $\calt\sfG^{\lambda,x}$.  Note that this $t$-fiber $\calt\sfG^{x,\lambda}$ is naturally a diffeomorphic a copy of the corresponding $t$-fiber $\sfG^x$ of $\sfG$ and we will often use this identification; however, the identification is \emph{not} isometric (or volume preserving).

\begin{defn}\label{defn:quantization} The quantization of a smooth symbol $(V_1, V_2, \sigma)$ is defined as follows.  For $i=1,2$, let $H_i^{x,\lambda}$ be the space of $L^2$-sections of the bundle $s^*V_i|_{\sfG^x}$ on $\sfG^x$, defined with respect to the measure $\mu^{x,\lambda}$. 

For a function $f\in C_\cale(\sfG)$, define a bounded operator $\rho^{x,\lambda}_i(f)$  $(i=1,2)$ on $H^{x,\lambda}_i$ by the formula 
\[
(\rho^{x,\lambda}_i(f)\xi)(g):=\int_{\sfG^x}f(gh^{-1})\xi(h)d\mu^{x,\lambda}(h)
\]
where $\xi$ is in $H^{x,\lambda}_i$ and $g\in \sfG^x$ (the same proof as of Lemma \ref{def:reps} above shows that this is well-defined and bounded independently of $x$ and $\lambda$).

For $(x,\lambda)\in \sfG_0\times(0,1]$ define $Q^{x,\lambda}(\sigma):H_1^{x,\lambda}\to H_2^{x,\lambda}$ by the formula
\[
Q^{x,\lambda}(\sigma)(\varphi)(g):=\frac{1}{(2\pi)^d}\int_{A^*_{s(g),\lambda}}\int _{\sfG^x}e^\lambda_{-\xi}(hg^{-1}) \sigma(s(g), \xi)\varphi(h)  d\mu^{x,\lambda}(h)d\xi,
\]
where $g\in \sfG^x$, $\varphi\in H_1^{x,\lambda}$, $d\xi$ is the volume element on $A^*_{s(g),\lambda}$ associated to the metric $\langle\ ,\ \rangle_{x,\lambda}$ on $A_{s(g),\lambda}$, and $e^\lambda_\xi$ is defined in the same way as Equation (\ref{eq:funct-e}) with the $\lambda$-scaled metric. (we will see below that the formula does indeed define a bounded operator).   
\end{defn}

To study the norm properties of the operators $Q^{x,\lambda}(\sigma)$, we will need the following result from \cite[Lemma 3.9]{so:quasi-class}.

\begin{lem}
\label{lem:sobolev} Let $p(x, y, \xi)$ be a continuous function on $\mathbb{R}^d\times \mathbb{R}^d\times \mathbb{R}^d$ such that 
\[
|\nabla^x_{i_1}\cdots \nabla^x_{i_s} \nabla^y_{j_1}\cdots \nabla^y_{j_t} \nabla^\xi_{k_1}\cdots \nabla^\xi_{k_u}p(x,y,\xi)|   
\] 
is bounded on $\mathbb{R}^d\times \mathbb{R}^d\times \mathbb{R}^d$ for 
\[
\ 0\leq i_1, \cdots, i_s, j_1, \cdots, j_t, k_1, \cdots, k_u\leq d,\ 0\leq s, t\leq k,\ 0\leq u\leq d+1,
\]
where $k=[d/2]+1$. Let $N(k,k, d+1)(p)$ be the norm of the function $p$ defined as follows
\[
N^{k, k, d+1}(p)=\max\limits_{\tiny \begin{array}{c}0\leq s\leq k\\ 0\leq t \leq k\\ 0\leq u\leq d+1\end{array}}\operatorname{sup}|(\nabla^x)^s(\nabla^y)^t(\nabla^\xi)^up(x, y, \xi)|. 
\]
In the above definition of $N^{k,k,d+1}(p)$, the term $|(\nabla^x)^s(\nabla^y)^t(\nabla^\xi)^up(x, y, \xi)|$ is defined by
\[
|(\nabla^x)^s(\nabla^y)^t(\nabla^\xi)^up(x, y, \xi)|^2=\sum_{\tiny\begin{array}{c}0\leq i_1, \cdots, i_s\leq d\\ 0\leq j_1, \cdots, j_t\leq d\\ 0\leq k_1, \cdots, k_u\leq d\end{array}} |\nabla^x_{i_1}\cdots \nabla^x_{i_s}\nabla^y_{j_1}\cdots \nabla^y_{j_t}\nabla^\xi_{k_1}\cdots \nabla^\xi_{k_u} p(x, y, \xi)|^2. 
\]
Let $\operatorname{Op}^\lambda(p)$ be a linear operator on $L^2(\mathbb{R}^d)$ defined by
\[
\operatorname{Op}^\lambda(p)(u):=\left(\frac{1}{2\pi \lambda}\right)^{d} \int_{\mathbb{R}^d}d\xi \int_{\mathbb{R}^d} dy e^{\frac{i\langle x-y, \xi \rangle}{\lambda}}p(x, y, \xi)u(y)
\]
for any function $u$ on $L^2(\mathbb{R}^d)$ of Schwartz type. 

The operator $\operatorname{Op}^\lambda(p)$ is a bounded operator on $L^2(\mathbb{R}^d)$ such that there is a constant $C>0$ such 
\[
\|\operatorname{Op}^\lambda(p)\|\leq C N^{k, k, d+1}(p),
\]
for $0<\lambda \leq 1$. The constant $C$ only depends on the dimension $d$. 
\end{lem}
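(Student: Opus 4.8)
The plan is to first remove the semiclassical parameter by scaling, and then invoke a Calder\'on--Vaillancourt type $L^2$-boundedness argument in the spirit of Cordes and Hwang. For the scaling step, substitute $\xi=\lambda\eta$ in the oscillatory integral: the phase $e^{i\langle x-y,\xi\rangle/\lambda}$ becomes $e^{i\langle x-y,\eta\rangle}$ and the prefactor $(2\pi\lambda)^{-d}$ is absorbed by the Jacobian $\lambda^d$, so that $\operatorname{Op}^\lambda(p)=\operatorname{Op}^1(p_\lambda)$ with $p_\lambda(x,y,\eta):=p(x,y,\lambda\eta)$. Since each $\eta$-derivative of $p_\lambda$ carries a factor $\lambda$, and $0<\lambda\le 1$, while the $x$- and $y$-derivatives are unchanged, one checks $N^{k,k,d+1}(p_\lambda)\le N^{k,k,d+1}(p)$. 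Hence any dimensional bound $\|\operatorname{Op}^1(q)\|\le C(d)\,N^{k,k,d+1}(q)$, applied to $q=p_\lambda$, yields the assertion uniformly in $\lambda\in(0,1]$. Everything is thereby reduced to the single operator $\operatorname{Op}^1$.

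Next I would control the kernel. The Schwartz kernel of $\operatorname{Op}^1(p)$ is $K(x,y)=(2\pi)^{-d}\int e^{i\langle x-y,\xi\rangle}p(x,y,\xi)\,d\xi$, which is only a conditionally convergent oscillatory integral because $p$ is merely a bounded amplitude of order zero. To tame it, decompose $p=\sum_{\nu\in\mathbb{Z}^d}p_\nu$ with $p_\nu=p\cdot\psi(\xi-\nu)$ for a smooth partition of unity $\{\psi(\cdot-\nu)\}$, and integrate by parts $d+1$ times in $\xi$ in each piece. Since this produces the factor $(1+|x-y|^2)^{-(d+1)}$ and the integrand now has compact $\xi$-support, the piece $T_\nu:=\operatorname{Op}^1(p_\nu)$ has kernel obeying $|K_\nu(x,y)|\le C(1+|x-y|)^{-(d+1)}N^{0,0,d+1}(p)$ uniformly in $\nu$; as $d+1>d$ this weight is integrable, so Schur's test bounds each $T_\nu$ by $C\,N^{0,0,d+1}(p)$. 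This is precisely where the $d+1$ derivatives in $\xi$ are consumed.

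To assemble the pieces I would use the Cotlar--Stein lemma. One estimates $\|T_\mu^*T_\nu\|$ and $\|T_\mu T_\nu^*\|$ by writing the composed kernel as an integral in an intermediate variable $z$ and integrating by parts there, exploiting the frequency gap $|\mu-\nu|$: each spatial integration by parts costs one $x$- or $y$-derivative of $p$ and gains a factor $|\mu-\nu|^{-1}$, which, combined with the spatial kernel decay from the previous step, produces almost-orthogonality. Equivalently, and more transparently, represent $\operatorname{Op}^1(p)$ as a superposition of modulation and translation unitaries weighted by the Fourier transform $\widetilde p$ of $p$ in its two spatial arguments, bound the operator norm by the $L^1$-norm of $\widetilde p$, and apply Cauchy--Schwarz against the weight $(1+|\cdot|^2)^{-k}$: this weight is integrable exactly when $2k>d$, i.e. $k=[d/2]+1$, and Plancherel turns the remaining factor into the $L^2$-norm of at most $k$ spatial derivatives of $p$, that is, into $N^{k,k,d+1}(p)$. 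Either route gives $\|\operatorname{Op}^1(p)\|\le C(d)\,N^{k,k,d+1}(p)$.

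The main obstacle is the interaction of non-compact support in $\xi$ with the amplitude (two-spatial-variable) dependence of $p$: one must extract spatial kernel decay and frequency-gap decay \emph{simultaneously} while keeping every intermediate integral absolutely convergent, and one must track constants to confirm that $C$ depends only on $d$. The threshold $2([d/2]+1)>d$ is the bookkeeping point that makes the relevant weight integrable, and pushing the $x$/$y$-derivative count down to $[d/2]+1$ (rather than the naive $d+1$ that a pure Schur estimate would demand) is the one place where the crude estimates must be upgraded to the sharper Cordes--Hwang Plancherel argument.
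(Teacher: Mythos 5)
First, a point of reference: the paper does not prove this lemma at all --- it is quoted verbatim from Sobolev's memoir \cite[Lemma 3.9]{so:quasi-class} --- so there is no internal proof to compare against, and your task was effectively to reprove a result from the Calder\'on--Vaillancourt circle. Your opening move is correct and is exactly the right one: substituting $\xi=\lambda\eta$ gives $\operatorname{Op}^\lambda(p)=\operatorname{Op}^1(p_\lambda)$ with $p_\lambda(x,y,\eta)=p(x,y,\lambda\eta)$, and since $\eta$-derivatives only produce factors $\lambda\le 1$, one has $N^{k,k,d+1}(p_\lambda)\le N^{k,k,d+1}(p)$, so uniformity in $\lambda$ comes for free. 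Likewise, the single-block bound $\|T_\nu\|\le C\,N^{0,0,d+1}(p)$ via $d+1$ integrations by parts in $\xi$ and Schur's test is fine.

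The genuine gaps are in the re-assembly, and both of your proposed routes fail as stated. Route one (Cotlar--Stein over frequency blocks): each integration by parts in the intermediate variable costs one spatial derivative and gains $|\mu-\nu|^{-1}$, so with only $k=[d/2]+1$ spatial derivatives available in the relevant slot you can prove at best $\|T_\mu^*T_\nu\|+\|T_\mu T_\nu^*\|\lesssim (1+|\mu-\nu|)^{-k}$. Cotlar--Stein over the lattice $\mathbb{Z}^d$ requires $\sum_\nu (1+|\nu|)^{-k/2}<\infty$, i.e.\ $k>2d$, which is false for every $d\ge 1$; and the spatial kernel decay $(1+|x-y|)^{-(d+1)}$ gives no decay in the frequency separation $|\mu-\nu|$, so it cannot make up the deficit. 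This is precisely why the $[d/2]+1$ threshold is a nontrivial theorem (Cordes, Hwang) rather than a routine almost-orthogonality exercise. Route two (superposition of unitaries plus Plancherel): for a general bounded amplitude the spatial Fourier transform $\widetilde p$ is only a distribution --- for $p\equiv 1$ it is a product of delta functions, $\operatorname{Op}^1(1)=I$, and every weighted $L^2$-norm in your Cauchy--Schwarz step is $+\infty$, so the argument yields the useless bound $1\le\infty$ rather than $C\,N^{k,k,d+1}(1)$. Relatedly, Plancherel produces $L^2$-norms of spatial derivatives of $p$ over all of $\mathbb{R}^{2d}$, which are infinite for non-decaying $p$ and are not dominated by the sup-type norm $N^{k,k,d+1}(p)$.

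Both defects have a common cure, close to what you wrote but with the decomposition taken in the other variable: localize $p$ in the \emph{spatial} variables over unit cubes, $p_{mn}=p\,\chi(x-m)\chi(y-n)$, $m,n\in\mathbb{Z}^d$. On each compactly supported piece the Plancherel/Cordes--Hwang argument is legitimate ($L^2$-norms over a fixed cube are controlled by sup-norms, and $2k>d$ enters exactly as you say), giving $\|T_{mn}\|\lesssim N^{k,k,d+1}(p)$ uniformly; the $d+1$ integrations by parts in $\xi$ additionally give $\|T_{mn}\|\lesssim (1+|m-n|)^{-(d+1)}N^{k,k,d+1}(p)$; and since each $T_{mn}$ has kernel supported in a product of (slightly enlarged) cubes, a Schur test at the block level --- which needs off-diagonal decay of order $>d$, not the $>2d$ that Cotlar--Stein demands --- sums the pieces. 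Even then, one issue remains on a single cube: the $\sup_\xi$ of the multiplier $\widetilde p_{mn}(q,r,\cdot)$ sits inside the $(q,r)$-integral and cannot simply be pulled through Plancherel; handling that interchange is exactly the technical content of the Cordes/Hwang arguments you invoke, and it is where real work would still be required.
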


\begin{prop}\label{lem:bound} Let $(V_1, V_2, \sigma)$ be a smooth symbol for $K_0(C_e(A^*))$.  Then under Assumption \ref{ass:kappa}, there is a positive constant $C>0$ such that for any $(x,\lambda)\in \sfG_0\times (0,1]$ the operator $Q^{x,\lambda}(\sigma):H_1^{x,\lambda}\to H_2^{x,\lambda}$ satisfies
\[
\|Q^{x,\lambda}(\sigma)\|_{ \mathcal{B}(H_1^{x,\lambda},H_2^{x,\lambda})}\leq C,
\] 
i.e.\ $Q^{x,\lambda}(\sigma)$ is uniformly bounded for all $\lambda$ and $x$. 
\end{prop}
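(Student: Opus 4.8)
The plan is to show that, fiber by fiber, each $Q^{x,\lambda}(\sigma)$ is an honest (semiclassical, $\lambda$-dependent) pseudodifferential operator on the Riemannian manifold $\sfG^x$, to which the uniform $L^2$-bound of Lemma \ref{lem:sobolev} applies after passing to normal coordinates, and then to patch the local estimates together using the finite propagation of $Q^{x,\lambda}(\sigma)$ and the bounded-overlap cover $\mathcal{U}^x$. Everything must be done with constants independent of $x$ and $\lambda$, so at each step I keep track of where uniformity comes from: the symbol bounds $N_m(\widehat\sigma)<\infty$ of Definition \ref{dfn:geometry-K}(4) (which are uniform across the $t$-fibers because $\widehat\sigma$ is $\sfG$-invariant), the uniform geometric bounds of Assumption \ref{ass:kappa}, and the fact that the constant in Lemma \ref{lem:sobolev} depends only on $d$.

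First I would reduce to trivial coefficient bundles: by Remark \ref{rem:symbol}(4) there are bundles $W_1,W_2$ over $\sfG_0$ with $V_i\oplus W_i$ trivial, and replacing $\sigma$ by $\sigma\oplus 0$ changes neither the operator norm nor the finiteness of the $N_m$, so I may assume $V_1=V_2=\sfG_0\times\C^N$ and treat $\sigma$ as a matrix-valued symbol; Lemma \ref{lem:sobolev} applies verbatim to matrix amplitudes. Next I localize. Fix $x$ and choose a smooth partition of unity $\{\chi_U^2\}_{U\in\mathcal{U}^x}$ subordinate to the cover $\mathcal{U}^x$, with derivatives bounded uniformly in $x$ and $U$ (possible since the balls have fixed radius $r<\iota$ and bounded geometry). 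Because the cutoff $\tilde\chi$ forces the integral kernel of $Q^{x,\lambda}(\sigma)$ to be supported where $hg^{-1}$ lies in the (scaled) $R$-ball, $Q^{x,\lambda}(\sigma)$ has propagation at most $R$; hence $\chi_U Q^{x,\lambda}(\sigma)=\chi_U Q^{x,\lambda}(\sigma)\,\psi_{N_R(U)}$ for a cutoff $\psi_{N_R(U)}$ supported in $N_R(U)$, and since $r+R<\iota$ this region carries geodesic normal coordinates identifying it with an open subset of $\R^d$.

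In these normal coordinates the operator becomes an oscillatory integral: using right-invariance to rewrite the phase as $\langle\xi,\exp^{-1}_g(h)\rangle$, tracking the $\lambda$-rescaling of the metric (so that $\mu^{x,\lambda}=\lambda^{-d}\mu^x$ while the fiber volume on $A^*_{\bullet,\lambda}$ carries a compensating $\lambda^{d}$), and performing the substitution $\xi\mapsto\xi/\lambda$ dictated by the blow-up, one recovers the prefactor $(2\pi\lambda)^{-d}$ and the semiclassical phase $e^{i\langle x-y,\xi\rangle/\lambda}$. Two geometric adjustments are needed to reach exactly the form $\op^\lambda(p)$ of Lemma \ref{lem:sobolev}: the nonlinear phase is linearized by writing $\exp^{-1}_g(h)=\Phi(x,y)(y-x)$ and changing the fiber variable by $\Phi(x,y)^{T}$, and the $\lambda$-dependent cutoff $\tilde\chi=\chi(\,\cdot\,/\lambda)$ is absorbed into the frequency variable through its (Schwartz) Fourier transform, so that no spurious $\lambda^{-1}$ factors land on the $x,y$-derivatives of the amplitude. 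The resulting amplitude $p$ is assembled from $\sigma$, the cutoff, the volume density of $\mu^x$, and the matrix $\Phi$, all expressed in normal coordinates.

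The crux is then to bound $N^{k,k,d+1}(p)$ (with $k=[d/2]+1$) uniformly in $x$, $\lambda$ and $U$. The derivatives falling on $\sigma$ are controlled by the finiteness of $N_m(\widehat\sigma)$, with the $\lambda$-dependence tamed by the fiberwise degree-zero homogeneity of $\sigma$ at infinity (Remark \ref{rem:symbol}(3)); the cutoff contributes the uniformly bounded derivatives of $\tilde\chi$ arranged before Definition \ref{defn:quantization}; and the remaining factors $\Phi$, $\Phi^{-1}$ and the volume density are functions of the metric in normal coordinates whose derivatives of every order are bounded, uniformly in $x$, by the injectivity-radius lower bound and the curvature-derivative bounds of Assumption \ref{ass:kappa} via standard Jacobi-field estimates. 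This last point — converting Assumption \ref{ass:kappa} into all-orders $C^\infty$ control of $\Phi$, $\exp^{\pm1}$ and the density on a ball of fixed radius, while keeping the cutoff's $\lambda$-scaling under control — is the main technical obstacle. Granting it, Lemma \ref{lem:sobolev} gives $\|\chi_U Q^{x,\lambda}(\sigma)\|\le C\,N^{k,k,d+1}(p)\le C'$ uniformly. Finally, since each $N_R(U_0)$ meets at most $m$ of the balls, the pieces $\{\chi_U Q^{x,\lambda}(\sigma)\}$ have uniformly bounded overlap on both domain and range; writing $Q^{x,\lambda}(\sigma)=\sum_U\chi_U^2\,Q^{x,\lambda}(\sigma)$ and running a bounded-overlap (Cotlar--Stein type) estimate yields $\|Q^{x,\lambda}(\sigma)\|\le m\sup_U\|\chi_U Q^{x,\lambda}(\sigma)\|\le C$, independent of $x$ and $\lambda$, as required.
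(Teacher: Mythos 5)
Your proposal is correct and follows essentially the same route as the paper: reduce to trivial bundles via Remark \ref{rem:symbol}, exploit the finite propagation coming from the cutoff $\tilde\chi$, localize over the bounded-overlap cover of Lemma \ref{lem:cover} with a partition of unity, apply Lemma \ref{lem:sobolev} in normal coordinates (uniformly in $x,\lambda$ thanks to Assumption \ref{ass:kappa} and $N_m(\widehat\sigma)<\infty$), and patch by bounded overlap. The only cosmetic difference is that you decompose the operator ($Q=\sum_U\chi_U^2 Q$, Cotlar--Stein style) while the paper decomposes the vector ($u=\sum_i\varphi_i u$) and bounds the quadratic form $\langle Qu,u\rangle$; both hinge on the same two lemmas and the same geometry, and the coordinate-change technicality you flag as the main obstacle is treated at the same level of detail (i.e.\ asserted, not carried out) in the paper itself.
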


\begin{proof} Remark \ref{rem:symbol} Part \eqref{comp rem} tells us that each of $V_1$, $V_2$ is complemented; using these complements, we may assume that we are working with trivial bundles.  For notational simplicity, however, we assume that $V_1$ and $V_2$ are the trivial \emph{line} bundle on $\sfG_0$; the straightforward generalization of the following proof to matrix valued versions of $C_e(A^*)$, $C^*_\cale(\calt \sfG)$, and $C^*_\cale(\sfG)$ leads to the result for general $V_1$, $V_2$; we leave the details to the reader.   Moreover, using Remark \ref{rem:symbol} Part \eqref{hom rem} we may assume that $\sigma$ is a $\complex$-valued function on $A^*$ that is homogeneous of degree zero outside some neighborhood of the zero section.   Fix now $(x,\lambda)\in \sfG_0\times (0,1]$.  When $V=V_1=V_2$ is the trivial line bundle, the Hilbert spaces $H^{x,\lambda}_1$ and $H^{x,\lambda}_2$ both identify with $L^2(\sfG^x, \mu^{x,\lambda})$.

According to its definition, $Q^{x,\lambda}(\sigma)$ has the Schwartz kernel $\mathfrak{K}(\sigma)(g, h)$ defined for $g,h\in \sfG^x$ by 
\[
\mathfrak{K}(\sigma)(g,h):=\frac{1}{(2\pi)^d}\int_{A^*_{s(g),\lambda}}e^\lambda_{-\xi}(hg^{-1}) \sigma(s(g), \xi)d\xi.
\]
From its expression, we can see that $Q^{x,\lambda}(\sigma)$ is a pseudodifferential operator on $H$ with symbol $\sigma(s(g), \xi)$. As $e^\lambda_\xi(-)$ is supported within the $R$-neighborhood of $x$ in $\sfG^x$ for the scaled metric, the kernel $\mathfrak{K}(\sigma)$ of $Q^{x,\lambda}(\sigma)$ is supported within the subset $\{(g,h)\in \sfG^x\times \sfG^x \mid d^{x,\lambda}(g,h)<R\}$ of $\sfG^x\times \sfG^x$.  

Using Assumption \ref{ass:kappa} and Lemma \ref{lem:cover}, there is an open cover $\mathcal{U}^x$ of $\sfG^x$ with the properties listed there with respect to the constants $r$ and $R$; note that $\mathcal{U}^x$ is necessarily countable.  Let $\{\varphi_i\}_{i\in \N}$ be a smooth partition of unity on $\sfG^x$ subordinate to the cover $\mathcal{U}^x$.   Let $u\in L^2(\sfG^x, \mu^{x,\lambda})$ and write $u=\sum_i \varphi_i u$, where $\varphi_iu\in L^2(\sfG^x, \mu^{x,\lambda})$. As $0\leq \varphi\leq 1$, 
\[
\|\varphi_i u\|^2_{L^2(\sfG^x, \mu^{x,\lambda})}\leq \int_{\sfG^x} \varphi_i |u|^2 \mu^{x,\lambda}. 
\]
Therefore, 
\[
\sum_i \|\varphi_i u\|^2\leq \sum_i \int_{\sfG^x} \varphi_i |u|^2\mu^{x,\lambda}\leq \|u\|^2.
\] 
The conditions in Lemma \ref{lem:cover} imply that given $U_i\in \calu^x$ there are at most $m$ other open sets $U_j$ in $\calu^x$ that intersects $U_i$ nontrivially. Accordingly, we compute
\[
\begin{split}
\Big\|\sum_{i\geq N} \varphi_i u\Big\|^2=\Big\langle \sum_{i\geq N} \varphi_i u, \sum_{j\geq N} \varphi_j u\Big\rangle &\leq \sum_{i,j\geq N}|\langle \varphi_i u, \varphi_j u\rangle | \leq \sum_{U_i \cap U_j \ne \varnothing, ~i,j\geq N} \frac{1}{2}(\|\varphi_i u\|^2+\|\varphi_j u\|^2)\\
&\leq \frac{m+1}{2} \sum_{i\geq N} \|\varphi_i u\|^2=\frac{m+1}{2}\sum_{i\geq N} \|\varphi_i u\|^2\\
&\leq \frac{m+1}{2}\int \sum_{i\geq N} \varphi_i |u|^2 \mu^{x,\lambda}.
\end{split}
\]
From the above estimate, we see that $\sum_{i\leq N}\varphi_i u\to u$ as $N\to \infty$ in $L^2(\sfG^x, \mu^{x,\lambda})$. 

Note that $Q^{x,\lambda}(\sigma)$ is an order zero pseudodifferential operator on $\sfG^x$, and therefore is a bounded linear operator on $L^2(\sfG^x, \mu^{x,\lambda})$ by essentially the same argument as used in the proof of \cite[Theorem 18.1.11]{ho:book3}. Hence, for $u\in L^2(\sfG^x, \mu^{x,\lambda})$, we can compute $Q^{x,\lambda}(\sigma)(u)$ by 
\[
Q^{x,\lambda}(\sigma)(u)=\sum_i Q^{x,\lambda}(\sigma)(\varphi_i u),
\]
and so we have that $\langle Q^{x,\lambda}(\sigma)(u), u\rangle$ equals
\[
\Big\langle \sum_i Q^{x,\lambda}(\sigma)(\varphi_i u), \sum_j \varphi_j u\Big\rangle=\sum_{i,j}\langle Q^{x,\lambda}(\sigma)(\varphi_i u), \varphi_j u\rangle. 
\]
The inner product  $\langle Q^{x,\lambda}(\sigma)\left(\varphi_i u\right), \varphi_j u\rangle$ is non-zero only when the supports of $Q^{x,\lambda}(\sigma)\left(\varphi_i u\right)$ and $\varphi_j u$ have non-trivial intersection.  We continue the above computation by 
\[
\begin{split}
|\sum_{i,j}\langle Q^{x,\lambda}(\sigma)(\varphi_i u), \varphi_j u\rangle|&\leq \sum_{i,j}|\langle Q^{x,\lambda}(\sigma)(\varphi_i u), \varphi_j u\rangle|\\
&=\sum_{\operatorname{supp}(Q^{x,\lambda}(\sigma)(\varphi_i u))\cap \operatorname{supp}(\varphi_ju)\ne \varnothing } |\langle Q^{x,\lambda}(\sigma)(\varphi_i u), \varphi_j u\rangle|\\
&\leq \sum_{\operatorname{supp}(Q^{x,\lambda}(\sigma)(\varphi_iu ))\cap \operatorname{supp}(\varphi_ju)\ne \varnothing } \frac{1}{2}(\|Q^{x,\lambda}(\sigma)(\varphi_i u)\|^2+\|\varphi_j u\|^2)
\end{split}
\]

We have assumed that $\nabla^k\tilde{\chi}$ is uniformly bounded on the unit sphere $\mathbb{S}(\otimes ^k T\sfG^x)$ of $\otimes^kT\sfG^x$ for $0\leq k\leq 2d+1$. Furthermore, we have assumed that $\nabla^k\widehat{\sigma}$ is uniformly bounded on $\mathbb{S}(TT^*\sfG^x)$ for $0\leq k\leq 2d+1$.  It is not hard to check from this that the function $\tilde{\sigma}(g, h, \xi):=\tilde{\chi}(hg^{-1})\widehat{\sigma}(s(g), \xi)$ satisfies the assumption of Lemma \ref{lem:sobolev}, once we restrict its support to any suitably small ball diffeomorphic to Euclidean space. The support of $\varphi_i u$ is contained inside some ball $U_i$ of radius $r$ with center $z_i$.  As the function $\tilde{\chi}(gh^{-1})$ is zero if $d(g,h)\geq R$,  the support of $Q^{x,\lambda}(\sigma)(\varphi_i u)$ is inside the geodesic ball centered at $z_i$ with radius at most $r+R<\iota$. We can then use Lemma \ref{lem:sobolev} (plus a change of variables to take into account the dependence of the metric and volume on $\lambda$) to conclude that 
\[
\|Q^{x,\lambda}(\sigma)(\varphi_i u)\|_{L^2(\sfG^x, \mu^{x,\lambda})}\leq D N^{k, k, 2d+1}(\tilde{\sigma})\|\varphi_i u\|_{L^2(\sfG^x, \mu^{x,\lambda})}\leq M\|\phi_iu\|_{L^2(\sfG^x,\mu^{x,\lambda})}, 
\]
where $M$ is an absolute constant, depending only on the size of $\nabla^k\widehat{\sigma}$ and $\nabla^k\tilde{\chi}$, and the $k$th-order derivatives of the Riemannian curvature tensor and dimension of $\sfG^x$ (and not on $\lambda$).
Hence we can bound $\|Q^{x,\lambda}(\sigma)(\varphi_i u)\|^2+\|\varphi_j u\|^2$ from above by 
\[
M^2 \|\varphi_i u\|^2+ \|\varphi_j u\|^2.
\]

By the assumption on the cover $\calu^x$, the support $\operatorname{supp}(Q^{x,\lambda}(\sigma)(\varphi_i u))$ intersects at most $m$ the supports $\operatorname{supp}(\varphi_ju)$. We can continue the above estimate of 
$$ \sum_{\operatorname{supp}(Q^{x,\lambda}(\sigma)(\varphi_i u))\cap \operatorname{supp}(\varphi_ju)\ne \varnothing } \frac{1}{2}(\|Q^{x,\lambda}(\sigma)(\varphi_i u)\|^2+\|\varphi_j u\|^2)$$ by the following inequalities
\[
\begin{split}
& \sum_{\operatorname{supp}(Q^{x,\lambda}(\sigma)(\varphi_i u))\cap \operatorname{supp}(\varphi_ju)\ne \varnothing } \frac{1}{2}(\|Q^{x,\lambda}(\sigma)(\varphi_i u)\|^2+\|\varphi_j u\|^2)\\
\leq& \sum_{i} \frac{1}{2} (M^2+m) \|\varphi_i u\|^2\leq  \frac{1}{2} (M^2+m) \sum_i \|\varphi_i u\|^2\\
\leq & \frac{1}{2} (M^2+m) \|u\|^2. 
\end{split}
\]

In summary, we have proved that for fixed $\lambda$, 
\[
|\langle Q^{x,\lambda}(\sigma)(u), u\rangle|\leq \frac{1}{2} (M^2+m) \|u\|^2,
\]
where $M$ and $m$ are constants independent of $\lambda$.  This implies
$$
\|Q^{x,\lambda}(\sigma)\|\leq 2(M^2+m),
$$ 
and we are done.
\end{proof}

Our next goal is to patch these operators $Q^{x,\lambda}$ together in a suitable sense to give a globally defined multiplier of $C^*_\cale(\calt\sfG)$.  For notational simplicity, we will keep to the case where $V_1$ and $V_2$ are trivial, leaving the (minor) extra details necessary in the matricial case to the reader.  

Define
$$
H:=\bigoplus_{x\in \sfG_0}L^2(A_x)\oplus \bigoplus_{(x,\lambda)\in \sfG_0\times (0,1]}L^2(\sfG^x,\mu^{x,\lambda})
$$
and let
$$
\rho:C_\cale(\calt \sfG) \to \mathcal{B}(H)
$$  
be the direct sum of the representations defining the norm on $C_\cale(\calt \sfG)$ (see Definition \ref{def:red roe}), so $\rho$ extends to a faithful representation of the Roe algebra $C^*_\cale(\calt \sfG)$.  Let $Q(\sigma)$ be the operator on $H$ which acts as $Q^{x,\lambda}(\sigma)$ on each summand $L^2(\sfG^x,\mu^{x,\lambda})$, and by fiberwise convolution by the fiberwise Fourier transform $\mathcal{F}(\sigma^x)$ of $\sigma$ on each $L^2(A_x)$.  Combining Proposition \ref{lem:bound} with basic estimates on the norms of the convolution operators, we see that $Q(\sigma)$ is a well-defined, bounded operator on $H$.

\begin{lem}\label{lem:mult}
We have following inclusions, 
\[
Q(\sigma)\cdot \rho(C_\cale(\calt\sfG))\subset \rho(C_\cale(\calt\sfG)),\qquad \rho(C_\cale(\calt\sfG))\cdot Q(\sigma)\subset \rho(C_\cale(\calt\sfG)). 
\]
\end{lem}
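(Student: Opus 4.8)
The plan is to recognise $Q(\sigma)$ as a right-$\calt\sfG$-invariant, order-zero pseudodifferential operator of uniformly finite propagation, and then to read off both inclusions from the composition calculus of groupoid pseudodifferential operators (\cite{nwx:pseudo}, \cite{mont-pier}), tracking supports against the coarse structure. Concretely, I want to show that for $f\in C_\cale(\calt\sfG)$ the product $Q(\sigma)\rho(f)$ is again $\rho(f')$ for a function $f'$ that is continuous, bounded, and supported in an entourage, and similarly on the other side.

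First I would record that the family $\{Q^{x,\lambda}(\sigma)\}$ is right invariant. Using right invariance of the Haar system together with the identities $s(gk)=s(g)$ and $(hk)(gk)^{-1}=hg^{-1}$, the Schwartz kernel $\mathfrak{K}(\sigma)$ written down in the proof of Proposition \ref{lem:bound} satisfies $\mathfrak{K}(\sigma)(gk,hk)=\mathfrak{K}(\sigma)(g,h)$, so the family descends to a single reduced kernel on $\calt\sfG$. Because the amplitude is the order-zero symbol $\sigma(s(g),\xi)$, this reduced kernel is a conormal distribution along the unit space; and the cutoff $\tilde\chi$, together with the propagation estimate in Proposition \ref{lem:bound} (which shows $\mathfrak{K}(\sigma)$ is supported where $d^{x,\lambda}(g,h)<R$, i.e.\ where $gh^{-1}\in E_R$), shows that $Q(\sigma)$ has propagation at most $R$, so its reduced kernel is supported in the entourage $E_R$.

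Next, for $f\in C_\cale(\calt\sfG)$ — which for the Lie groupoid $\calt\sfG$ we may take to consist of smooth functions, per the convention of Definition \ref{def:haar} — the operator $\rho(f)$ has smooth reduced kernel $f$ and hence behaves as a smoothing operator. Since composing a $\Psi^0$-operator with a smoothing operator is smoothing, the product $Q(\sigma)\rho(f)$ has a smooth reduced kernel, namely the fibrewise groupoid convolution $f':=\mathfrak{K}(\sigma)\ast f$; concretely $f'$ is obtained by applying the pseudodifferential operators $Q^{x,\lambda}(\sigma)$ to the smooth translates $h\mapsto f(h\,\cdot\,)$, and is therefore smooth, in particular continuous. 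I would then check the three conditions placing $f'$ in $C_\cale(\calt\sfG)$: its support lies in $E_R\cdot(\supp f)$, contained in an entourage by Definition \ref{def:entourage}(2); its sup norm is bounded uniformly in $(x,\lambda)$ by the scale-uniform estimate of Lemma \ref{lem:sobolev} together with Assumption \ref{ass:kappa}, reprising the covering and partition-of-unity argument of Proposition \ref{lem:bound}; and it depends continuously on $(x,\lambda)$, including across the face $\lambda=0$ where $Q^{x,0}(\sigma)$ degenerates to fibrewise convolution on $A_x$, by smoothness of the tangent-groupoid structure in $\lambda$. This yields $Q(\sigma)\rho(f)=\rho(f')\in\rho(C_\cale(\calt\sfG))$. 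The second inclusion follows symmetrically, either directly via the convolution $f\ast\mathfrak{K}(\sigma)$ or by taking adjoints: $Q(\sigma)^\ast$ is again a finite-propagation order-zero pseudodifferential operator, so $\rho(f)Q(\sigma)=\big(Q(\sigma)^\ast\rho(f^\ast)\big)^\ast$ reduces to the case already treated.

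The main obstacle is the claim in the third paragraph that the composite is genuinely smoothing with a bounded reduced kernel, uniformly in $x$ and $\lambda$. The conormal singularity of the order-zero kernel $\mathfrak{K}(\sigma)$ along the diagonal is not locally integrable, so the regularisation of this singularity by $f$ cannot be justified by a naive kernel integral and must be routed through the pseudodifferential calculus (equivalently, through applying $Q^{x,\lambda}(\sigma)$ to smooth translates of $f$); and the sup-norm bound on $f'$ must be obtained uniformly while tracking the blow-up of the metric and volume as $\lambda\to 0$. Both difficulties are exactly what Assumption \ref{ass:kappa} and the $\lambda$-uniform operator bound of Lemma \ref{lem:sobolev} are designed to control, so the estimates here closely parallel those of Proposition \ref{lem:bound}.
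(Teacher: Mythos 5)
Your proposal is correct and follows essentially the same route as the paper: both identify the reduced kernel of $Q(\sigma)\rho(f)$ as the groupoid convolution of the kernel of $Q(\sigma)$ with $f$, check that it is supported in a product of entourages, that it is continuous/smooth (the paper via rapid decay of the fibrewise Fourier transform of $f$, you via the equivalent fact that $\Psi^0$ composed with a smooth-kernel operator is smoothing), and that it extends continuously across $\lambda=0$ where the quantization degenerates to fibrewise convolution on $A$. The only differences are presentational --- the paper verifies the $\lambda\to 0$ compatibility by an explicit computation with the exponential map and Jacobian, and handles the second inclusion by saying it is ``similar'' rather than by taking adjoints --- and note that both your argument and the paper's implicitly require $f$ to be smooth along the fibers (your citation of Definition \ref{def:haar} for this concerns the Haar system, not $C_\cale(\calt\sfG)$ itself), which suffices for the intended multiplier application by density.
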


\begin{proof}
For simplicity, we just look at the product $Q(\sigma)\rho(f)$ for $f\in C_\cale(\calt\sfG)$; the other case is similar.   We now start computing.  

First consider, $\lambda>0$, and let 
\[
\mathfrak{K}^{x,\lambda}(\sigma)(g,h):=\frac{1}{(2\pi)^d}\int_{A^*_{s(g), \lambda}}e^\lambda_{-\xi}(hg^{-1}) \sigma(s(g), \xi)d\xi
\]
denote the Schwartz kernel of $Q^{x,\lambda}(\sigma)$; here the function $e^\lambda_{-\xi}(-)$ is defined in the same way as the function $e_{-\xi}(-)$ in Equation (\ref{eq:funct-e}), but using the $\lambda$-scaled metric on $A$.   Then the Schwartz kernel of the operator $Q(\sigma)\rho(f)$ acting on $L^2(\sfG^x,\mu^{x,\lambda})$ is given by
\begin{align*}
\mathfrak{L}^{x,\lambda}(\sigma)(g,h) & =\int_{\sfG^x}\mathfrak{K}^{x,\lambda}(\sigma)(g,k) f(kh^{-1})d\mu^{x,\lambda}(k) \\
&=\int_{\sfG^x}d\mu^{x, \lambda}(k) f(kh^{-1})\frac{1}{(2\pi)^d}\int_{A^*_{s(g),\lambda}}e^\lambda_{-\xi}(kg^{-1}) \sigma(s(g), \xi)d\xi. 
\end{align*}
Set $k'=kh^{-1}$, allowing us to rewrite the above integral as 
\[
\int_{\sfG^{s(h)}}d\mu^{s(h), \lambda}(k') f(k')\frac{1}{(2\pi)^d}\int_{A^*_{s(g),\lambda}}e^\lambda_{-\xi}(k'hg^{-1}) \sigma(s(g), \xi)d\xi 
\]
Define a function $\ell_\lambda:\sfG\to \C$ by 
\[
\ell_\lambda(g)=\int_{\sfG^{x}}d\mu^{x,\lambda}(k') f(k')\frac{1}{(2\pi)^d}\int_{A^*_{s(g),\lambda}}e^\lambda_{-\xi}(k'g^{-1}) \sigma(s(g), \xi)d\xi. 
\]

At this point, we know that the restriction of $Q(\sigma)\rho(f)$ to each $L^2(\sfG^x,\mu^{x,\lambda})$ has Schwartz kernel $(g,h)\mapsto \ell_\lambda(hg^{-1})$. We claim that $\ell_\lambda$ actually belongs to $C_\cale(\sfG)$, which follows from the next two observations. 
\begin{enumerate}
\item As  $f|_{\calt\sfG^{x,\lambda}}$ is supported in a compact set, the Fourier transform
\[
\int_{\sfG^x} d\mu^{x, \lambda}(k')e^\lambda_{-\xi}(k'g^{-1}) f(k')
\] 
is a function of $\xi$ decaying rapidly at infinity. Therefore, 
\[
\frac{1}{(2\pi)^d}\int_{A^*_{s(g),\lambda}}\sigma(s(g), \xi)d\xi  \int_{\sfG^x} d\mu^{x,\lambda}(k')e^\lambda_{-\xi}(k'g^{-1})f(k')
\]  
is a continuous (actually, smooth) function of $g$. 
\item  As $f(k')$ is supported within an $S>0$ tube of the unit space, when $g$ is outside the $S+R$ tube of the unit space, $e^\lambda_{-\xi}(k'g^{-1})f(k')=0$. Therefore $\ell_{\lambda}$ is supported within the $S+R$ tube of $\sfG_0$.  
\end{enumerate}

Define now $\ell:\calt\sfG\to \C$ by stipulating that the restriction $\ell_\lambda$ of $\ell$ to the fiber over $\lambda$ is given by
\[
\left\{\begin{array}{ll}
\ell_\lambda(g):=\int_{\sfG^{x}}d\mu^{x,\lambda }(k') f_\lambda (k')\frac{1}{(2\pi)^d}\int_{A^*_{s(g),\lambda}}e^\lambda_{-\xi}(k'g^{-1}) \sigma(s(g), \xi)d\xi&\lambda>0\\
\ell_\lambda(x,v):=\calf\big(\sigma(x)\big)\ast f(x) (v)&\lambda=0,
\end{array}
\right.
\]
where $\calf\big(\sigma(x)\big)$ is the fiberwise Fourier transform of $\sigma(x, -)$.  From our work so far, if $\ell$ can be shown to be continuous, then it is in $C_\cale(\calt\sfG)$ and we will have $Q(\sigma)\rho(f)=\rho(\ell)$.  The continuity of $\ell|_{\sfG_0\times (0,1]}$ and $\ell_{\sfG_0\times \{0\}}$ follow directly from the formulas and our work above, so we are left to check the compatibility of these two cases. 

Let $\{g_\lambda\}_{\lambda\in (0,1]}$ be a family in $\calt\sfG$ such that $g_\lambda \to (x, v)\in A$ as $\lambda\to 0$, where $A$ is identified with the fiber of $\calt\sfG$ over $0$. More explicitly, this means that as $\lambda\to 0$, $g_\lambda\to x$, and $\exp^{-1}_{t(g_\lambda)}(g_\lambda)\to v$ (it is important here that the exponential maps are defined with respect to the $\lambda$-scaled metrics).  We want to show that $\ell_\lambda (g_\lambda)$ converges to $\ell_0(x,v)$.  Indeed, 
\[
\ell_\lambda(g_\lambda)=\int_{t(g_\lambda)} d\mu^{t(g_\lambda),\lambda }(k')f_\lambda(k')\frac{1}{(2\pi)^d} \int_{A^*_{s(g_\lambda),\lambda}} e^\lambda_{-\xi}(k'g_\lambda^{-1})\sigma(s(g_\lambda), \xi)d\xi
\]
Write $k'=\exp_{t(g_\lambda)}(w)$, and let $J$ be the Jacobian of this exponential exponential map. We can rewrite $\ell_\lambda(g_\lambda)$ as
\[
\ell_\lambda(g_\lambda)=\int_{A_{t(g_\lambda),\lambda}} J_\lambda dw f(\exp_{t(g_\lambda)} (w))\frac{1}{(2\pi)^d} \int_{A^*_{s(g_\lambda),\lambda}} e^\lambda_{-\xi}(\exp_{t(g_\lambda)}(w) g_\lambda^{-1})\sigma(s(g_\lambda), \xi)d\xi
\]
As $f$ is in $C_\cale(\calt\sfG)$, $f( \exp_{t(g_\lambda)} (w))\to f(x, w)$ as $\lambda\to 0$. As $\lambda\to 0$, $J_\lambda\to 1$. And as $\lambda\to 0$, 
\[
e^\lambda_{-\xi}(\exp_{t(g_\lambda)}(w) g_\lambda^{-1})\to \exp(2\pi \sqrt{-1}\langle w-v, \xi\rangle). 
\]
From the above estimate, we can conclude $\ell_\lambda(g_\lambda)\to \ell_0(x, v)$, $\lambda\to 0$, completing the proof. 
\end{proof}

\begin{thm} \label{thm:k-theory} 
 Assume the Euclidean structure on $A$ satisfies Assumption \ref{ass:kappa}. Given a smooth symbol $(V_1, V_2, \sigma)$ representing an element in $K_0(C^*_e(A^*))$, the operator $Q(\sigma)$ defines a $K$-theory element $[Q(\sigma)]$ in $K_0(C^*_\cale (\calt \sfG))$ such that $ev_{0*}([Q(\sigma)])=[V_1, V_2, \sigma]$. 
\end{thm}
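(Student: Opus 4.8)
The plan is to realize $[Q(\sigma)]$ as the index class of the multiplier $Q(\sigma)$, and then to compute $ev_{0*}$ by naturality of the associated boundary map together with the Fourier picture at the $\lambda=0$ face. Write $B:=C^*_\cale(\calt\sfG)$, let $\mathcal{M}(B)$ be its multiplier algebra, and $\mathcal{Q}(B):=\mathcal{M}(B)/B$ the corona algebra. By Lemma \ref{lem:mult} together with the uniform bound of Proposition \ref{lem:bound}, $Q(\sigma)$ extends by continuity from $\rho(C_\cale(\calt\sfG))$ to a multiplier of $B$. After complementing $V_1,V_2$ to trivial bundles of equal rank (Remark \ref{rem:symbol}\eqref{comp rem}) and passing to a matrix amplification, I may assume we are working inside $\mathcal{M}(M_n(B))$; using Remark \ref{rem:symbol}\eqref{hom rem} I may further assume $\sigma$ is homogeneous of degree zero outside a compact neighborhood of the zero section, with partial inverse $\tau$ satisfying $\sigma\tau=\tau\sigma=1$ there and equal to the identity on the complemented directions.

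The first main step is to show that $Q(\sigma)$ is invertible modulo $B$, with approximate inverse $Q(\tau)$; granting this, $q(\sigma):=Q(\sigma)+B$ is invertible in $\mathcal{Q}(M_n(B))$, defines a class $[q(\sigma)]\in K_1(\mathcal{Q}(M_n(B)))$, and I set $[Q(\sigma)]:=\partial[q(\sigma)]\in K_0(B)$ via the boundary map of the extension $0\to B\to\mathcal{M}(B)\to\mathcal{Q}(B)\to 0$. Concretely I would establish $Q(\sigma)Q(\tau)-1\in B$ and $Q(\tau)Q(\sigma)-1\in B$. These reduce to two facts: (i) a composition formula $Q(\sigma)Q(\tau)\equiv Q(\sigma\tau)\ (\mathrm{mod}\ B)$, where the remainder kernels carry extra powers of $\lambda$ coming from the tangent-groupoid rescaling and hence vanish at the $\lambda=0$ face, so they land in the ideal $C_0(0,1]\otimes C^*_\cale(\sfG)$ identified in Proposition \ref{lem:ker-ev0}; and (ii) the fact that $\sigma\tau-1$ has compact support in the fiber direction, so (by the same continuity argument as in Lemma \ref{lem:mult}) its quantization lies in $B$, while $Q(1)$ differs from the identity multiplier by an element of $B$ since $\tilde\chi$ equals $1$ near the diagonal. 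Finite propagation of all kernels, inherited from the cutoff $\tilde\chi$, keeps everything supported in an entourage, so these operators genuinely lie in the Roe algebra rather than merely being bounded.

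Finally I would compute $ev_{0*}[Q(\sigma)]$. The evaluation $ev_0$ intertwines $Q(\sigma)$ with its restriction $Q_0:=ev_0(Q(\sigma))$ to the $\lambda=0$ fiber and carries the extension defining $[Q(\sigma)]$ to the corresponding extension for $C:=C^*_\cale(A)$, so by naturality of the boundary map $ev_{0*}[Q(\sigma)]=\partial_C[q_0]$ with $q_0:=Q_0+C$. At the $\lambda=0$ face the quantization acts by fiberwise convolution with $\calf(\sigma^x)$, so under the Fourier isomorphism $C^*_\cale(A)\cong\cea$ of Lemma \ref{lem:fourier} the multiplier $Q_0$ is exactly multiplication by $\sigma$, invertible modulo $\cea$ by condition (3) of Definition \ref{dfn:geometry-K}. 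It then remains to identify $\partial_C[\sigma]$ with $[V_1,V_2,\sigma]$: this is the standard fact that the boundary map applied to a multiplication symbol invertible at infinity returns the difference (clutching) class built from that symbol, which is precisely the class assigned to $(V_1,V_2,\sigma)$ in Remark \ref{rem:symbol}. This yields $ev_{0*}[Q(\sigma)]=[V_1,V_2,\sigma]$.

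I expect the composition step (i) to be the main obstacle: verifying, uniformly in $x\in\sfG_0$ and in $\lambda\in(0,1]$, that $Q(\sigma)Q(\tau)-Q(\sigma\tau)$ is not merely bounded but lies in $B$, i.e.\ that its kernel is continuous up to $\lambda=0$, supported in an entourage, and bounded. This is where Assumption \ref{ass:kappa} (injectivity radius bounded below and uniformly bounded derivatives of curvature) must be combined with the Sobolev-type estimate of Lemma \ref{lem:sobolev}, exactly as in Proposition \ref{lem:bound}, to control the pseudodifferential remainder on the family of non-compact fibers $\sfG^x$.
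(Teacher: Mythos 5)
Your proposal is correct and follows essentially the same route as the paper's proof: Lemma \ref{lem:mult} together with Proposition \ref{lem:bound} makes $Q(\sigma)$ a multiplier of $C^*_\cale(\calt\sfG)$, the pseudodifferential symbolic calculus (valid uniformly thanks to Assumption \ref{ass:kappa}) shows $Q(\sigma)Q(\tau)-I$ and $Q(\tau)Q(\sigma)-I$ lie in $C^*_\cale(\calt\sfG)$ because $\sigma\tau-1$ is supported near the zero section, and the class $[Q(\sigma)]$ is produced by the boundary map with $ev_{0*}$ computed from the fact that the $\lambda=0$ restriction is fiberwise Fourier multiplication by $\sigma$. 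Your explicit appeal to naturality of the boundary map for the extension $0\to B\to\mathcal{M}(B)\to\mathcal{M}(B)/B\to 0$, and your use of $K_1$ of the corona algebra (rather than the paper's looser phrasing), simply fill in details that the paper compresses into ``follows easily from the definition of $Q(\sigma)$ and of the latter class.''
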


\begin{proof} Following the proofs of Proposition \ref{lem:bound} and Lemma \ref{lem:mult}, we assume for notational simplicity that $V_1$ and $V_2$ are the trivial line bundle on $\sfG_0$ and $\sigma$ is a $\complex$-valued function on $A^*$ that is homogeneous of degree zero and invertible outside of some neighborhood of the zero section.    

Lemma \ref{lem:mult} combined with boundedness of $Q(\sigma)$ shows that $Q(\sigma)$ identifies canonically with an element of the multiplier algebra of $C^*_{\mathcal{E}}(\calt \sfG)$ (see for example \cite[Section 3.12]{ped}). 
Now, say $\tau$ is the partial inverse to $\sigma$ that appears in the definition of a smooth symbol.  Consider the operators $Q(\sigma)\circ Q(\tau)-I$ and $Q(\tau)\circ Q(\sigma)-I$.  Using the symbolic calculus of pseudodifferential operator theory, c.f. \cite[Theorem 18.1.23]{ho:book3}, \cite{Pf-Po-Ta:lie-groupoid-index}, given any $x\in \sfG_0$ and $\lambda\in (0,1]$, $Q^{(x,\lambda)}(\sigma)\circ Q^{(x,\lambda)}(\tau)-I$ is a pseudodifferential operator on $\sfG^x$ of order 0 with principal symbol the appropriate restriction of $\sigma\tau-1$. By Definition \ref{dfn:geometry-K}, $\sigma\tau-1$ is the Fourier transform of a function supported in some tube around the zero section of uniform width.  From this, together with similar (and simpler) arguments when $\lambda=0$, we can derive that both  $Q(\sigma)\circ Q(\tau)-I$ and $Q(\tau)\circ Q(\sigma)-I$ belong to $C^*_\cale(\calt\sfG)$. 

At this point, we have that $Q(\sigma)$ is an element of the multiplier algebra of $C^*_\cale(\calt \sfG)$, which is invertible modulo the ideal $C^*_\cale(\calt \sfG)$.  It thus defines an element of $K_0(C^*_\cale(\calt \sfG))$: precisely, if $[Q(\sigma)]_1$ is the element of the $K_1$-group of the multiplier algebra of $C^*_\cale(\calt\sfG)$, then the element $[Q(\sigma)]\in K_0(C^*_\cale(\calt\sfG))$ we get is the image of $[Q(\sigma)]_1$ under the boundary map.  The identity that $ev_{0, *}[W(\sigma)]=[V_1, V_2, \sigma]$ follows easily from the definition of $Q(\sigma)$ and of the latter class. 
\end{proof}

Observe from Theorem \ref{thm:k-theory} that at $\lambda=1$, $Q(\sigma)$ restricts to $Q^{x,1}(\sigma)$ on each $t$-fiber $\sfG^x$.  Accordingly, $ev_{1*}[Q(\sigma)]=[Q^{1}(\sigma)]\in K_0(C^*_\cale(\sfG))$, where $[Q^1(\sigma)]$ is the $K$-theory class defined by the invertible multiplier $Q^1(\sigma)$ of $C^*_\cale(\sfG)$, having identified $\sfG$ with the fiber of $\calt\sfG$ for $\lambda=1$.  We conclude this section with the following corollary, which says that the analytic index of a smooth symbol $[V_1, V_2, \sigma]$ is given by the (higher, or $K$-theoretic) index of the associated pseudodifferential operator in the classical sense.
\begin{cor}\label{cor:index} Under Assumption \ref{ass:kappa}, the analytic index $\Ind_a([V_1, V_2, \sigma])$ of the class $[V_1, V_2, \sigma]$ in $K_0(C_e(A^*))$ of a smooth symbol  is computed by $[Q^1(\sigma)]\in K_0(C^*_\cale(\sfG))$.  \qed
\end{cor}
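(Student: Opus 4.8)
The plan is to read off the corollary directly from Theorem \ref{thm:k-theory}, the definition of the analytic index map, and naturality of the index construction under the evaluation homomorphism $ev_1$. Recall from Definition \ref{def:aind} that $\Ind_a = {ev_1}_* \circ {ev_0}_*^{-1}$, where by Corollary \ref{prop:k-ext} together with the identification $C^*_\cale(A)\cong C_e(A^*)$ of Lemma \ref{lem:fourier} the map ${ev_0}_*\colon K_\bullet(C^*_\cale(\calt\sfG)) \to K_\bullet(C_e(A^*))$ is an isomorphism. Thus the entire content of the corollary is to evaluate ${ev_1}_*$ on the ${ev_0}_*$-preimage of the symbol class $[V_1,V_2,\sigma]$.

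First I would invoke Theorem \ref{thm:k-theory}, which produces a class $[Q(\sigma)]\in K_0(C^*_\cale(\calt\sfG))$ with ${ev_0}_*([Q(\sigma)]) = [V_1,V_2,\sigma]$. Since ${ev_0}_*$ is an isomorphism, this immediately gives $[Q(\sigma)] = {ev_0}_*^{-1}([V_1,V_2,\sigma])$, and hence $\Ind_a([V_1,V_2,\sigma]) = {ev_1}_*([Q(\sigma)])$. It then remains only to identify ${ev_1}_*([Q(\sigma)])$ with $[Q^1(\sigma)]$.

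For that last identity I would argue by naturality of the $K_0$-class attached to an invertible-modulo-ideal multiplier. The evaluation $ev_1\colon C^*_\cale(\calt\sfG)\to C^*_\cale(\sfG)$ is a surjective $*$-homomorphism (line \eqref{ev homs}), so it extends to a unital $*$-homomorphism of multiplier algebras that carries the multiplier $Q(\sigma)$ to its restriction over $\lambda=1$, which by the formula in Definition \ref{defn:quantization} is precisely the multiplier $Q^1(\sigma)$ of $C^*_\cale(\sfG)$. The class $[Q(\sigma)]$ is, by construction in Theorem \ref{thm:k-theory}, the image under the $K$-theory boundary map of the class $[Q(\sigma)]_1$ in the $K_1$-group of the corona algebra of $C^*_\cale(\calt\sfG)$. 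Because the boundary map is natural with respect to the commuting square of multiplier extensions induced by $ev_1$, we conclude ${ev_1}_*([Q(\sigma)]) = [Q^1(\sigma)]$. Chaining the three steps yields $\Ind_a([V_1,V_2,\sigma]) = [Q^1(\sigma)]$, as claimed.

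The only genuine content beyond this bookkeeping is the final naturality step: one must verify that $ev_1$ extends to the multiplier level compatibly with the boundary-map description of the index class, and that restricting the globally patched multiplier $Q(\sigma)$ to $\lambda=1$ literally reproduces $Q^1(\sigma)$. Both facts follow at once from the explicit description of $Q(\sigma)$ and the observation recorded in the paragraph preceding the corollary, which is why the statement admits a one-line proof rather than an extended argument.
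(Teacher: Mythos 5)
Your proposal is correct and follows essentially the same route as the paper: invoke Theorem \ref{thm:k-theory} to get $[Q(\sigma)]$ with ${ev_0}_*([Q(\sigma)])=[V_1,V_2,\sigma]$, apply the definition $\Ind_a={ev_1}_*\circ {ev_0}_*^{-1}$, and identify ${ev_1}_*([Q(\sigma)])$ with $[Q^1(\sigma)]$ via the observation that $Q(\sigma)$ restricts at $\lambda=1$ to $Q^1(\sigma)$. The only difference is that you spell out the naturality of the boundary map for the multiplier-algebra extensions, which the paper leaves implicit in its ``Accordingly, $ev_{1*}[Q(\sigma)]=[Q^{1}(\sigma)]$'' remark preceding the corollary.
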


\begin{rem}\label{rmk:ex-bounded-geom} When the unit space $\sfG_0$ is closed, i.e. compact without boundary, Assumption  \ref{ass:kappa} holds automatically.  Furthermore, every element in $K_0(C_e(A^*))$ has a representative that is a smooth symbol as in Definition \ref{dfn:geometry-K}. Therefore, Corollary \ref{cor:index} applies to equate the analytic index of a $K$-theory class $\sigma$ of $A^*$ with the index of its quantization $Q(\sigma)$. In this section, we have extended this result to groupoids with noncompact unit space under Assumption \ref{ass:kappa} on bounded geometry.  
\end{rem}

\section{Scalar curvature obstructions}\label{sc sec}
In this section, we apply the theory developed above to study (regular) foliations with leafwise positive scalar metrics.  We will discuss some specific instances of regular foliations; nonetheless, we develop the theory in a more general setting that this as the extra generality does not cause any extra difficulties.  

We assume throughout that $\sfG$ is an $s$-connected Lie groupoid with a Lie algebroid $A$, and that $A$ is equipped with a Euclidean structure $g_A$ that satisfies Assumption \ref{ass:kappa}.  We first consider two natural examples with this structure that arise from regular foliations on closed manifolds.

\subsection{Two examples}\label{sec:example}
Let $M$ be a closed (compact without boundary) manifold with a regular foliation $\calf$.  
In this subsection, we discuss two examples of groupoids naturally associated to $\calf$ with Riemannian metrics on the Lie algebroids satisfying the above properties.  
\begin{enumerate}
\item Let $\pi:\widehat{M}\to M$ be the universal covering space. As $\pi$ is a local diffeomorphism, $\calf$ lifts to a regular foliation $\widehat{\calf}$ on $\widehat{M}$.   Let $\mathcal{H}_{\widehat{\calf}}$ and $\mathcal{H}_{\calf}$ be the holonomy groupoids on $\widehat{M}$ and $M$ defined by holonomy equivalent paths along leaves of $\widehat{\calf}$ and $\calf$.  Let $g$ be a Riemannian metric on $M$. Accordingly, $g$ induces Euclidean metrics $g_\calf$ and $g_{\widehat{\calf}}$ on $\calf$ and $\widehat{\calf}$.  

By Proposition \ref{prop:example-entourage}, $\mathcal{H}_{\widehat{\calf}}$ is equipped with a right-invariant system of metrics $\{d^x\}_{x\in \widehat{M}}$. As $M$ is compact, on every leaf of $\calf$ and also of $\widehat{\calf}$,  directional derivatives of the Riemannian curvature tensor of any order are uniformly bounded and injectivity radii are uniformly bounded away from zero.  For every $x\in \widehat{M}$, by the right invariance of the metrics on $\mathcal{H}_{\widehat{\calf}}$, $\mathcal{H}_{\widehat{\calf}}^x$ is an isometric covering space of the leaf $\mathcal{L}_x$ of $\widehat{\calf}$ containing $x$. As leaves of $\widehat{\calf}$ have Riemannian curvature tensors with uniformly bounded directional derivatives and injectivity radii uniformly bounded below, $(\mathcal{H}_{\widehat{\calf}}, \{d_x\})_{x\in \widehat{M}}$ satisfies Assumption \ref{ass:kappa}.

\item Let $N_\calf=TM/\calf$ be the normal bundle of $\calf$ in $TM$.  Define a fiber bundle $\pi:\mathcal{M}\to M$ by stipulating that the fiber $\pi^{-1}(x)$ over $x\in M$ is the space of euclidean metrics on the normal space $N_\calf|_x$. 

The normal bundle $N_\calf$ is equipped with the Bott connection, which is a flat connection along the direction of the foliation $\mathcal{F}$. The Bott connection induces a partial Ehresmann connection on the fiber bundle $\calm$ along the direction of $\calf$. As the Bott connection is flat along $\calf$, the Ehresmann connection on the fiber bundle $\calm$ defines a foliation $\widetilde{\calf}$ on $\calm$, where every leaf of the foliation $\widetilde{\calf}$ is a covering space of the corresponding leaf of $\calf$ on $M$ with $\pi$ the covering map.  

Choose a Riemannian metric $g$ on $M$, which induces Riemannian metrics $g_\calf$ and $g_{\widetilde{\calf}}$ on $\calf$ and $\widetilde{\calf}$. The leaves of $\calf$ have uniformly bounded directional derivative (of any order) Riemannian curvature tensor and injectivity radii uniformly bounded from below. Since the restriction of $\pi$ to each leaf of $\widetilde{\calf}$ is a covering map, the leaves of $\widetilde{\calf}$ also have uniformly bounded directional derivative Riemannian curvature tensor and injectivity radii uniformly  bounded from below. Let $\mathcal{H}_{\widetilde{\calf}}$ be the holonomy groupoid of the regular foliation $\widetilde{\calf}$ on $\calm$. For every $x\in \calm$, each $t$-fiber $\calh_{\widetilde{\calf}}^x$ of $\calh_{\widetilde{\calf}}$ is an isometric covering space of the corresponding leaf $\call_x$ on $\calm$. Therefore, the $t$-fibers of $\calh_{\widetilde{\calf}}$ have Riemannian curvature tensors with uniformly bounded directional derivatives  and injectivity radii uniformly bounded below. 
\end{enumerate}

In either case, as the leaves of $\calf$ are all complete, the leaves of $\widehat{\calf}$ and $\widetilde{\calf}$, which are covering spaces of the leaves of $\calf$, are also complete. Therefore the $t$-fibers $\calh^x_{\widehat{\calf}}$ and $\calh_{\widetilde{\calf}}^x$ are all complete and proper metric spaces. With the above properties of leafwise bounded geometry, Assumption \ref{ass:kappa} holds true on the groupoids $\calh_{\widehat{\calf}}$ and $\calh_{\widetilde{\calf}}$. Therefore, we can use Definition \ref{def:red roe} to define the Roe $C^*$-algebras $C^*_{\mathcal{E}}(\calh_{\widetilde{F}})$ and $C^*_{\mathcal{E}}(\calh_{\widehat{F}})$ and study the analytic indices of longitudinal elliptic differential operators.

\subsection{Leafwise positive scalar metric}
In this subsection, we study the analytic index of a spin Dirac operator $\slashed{D}$. 

Let $g_A$ be a metric on the Lie algebroid $A$. The right translation of the groupoid $\sfG$ on itself is proper and free. The right translation of the metric $g_A$ defines an $\sfG$-invariant Riemannian metric on each $t$-fiber $\sfG^x$ for $x\in X$.  The Levi-Civita connection $\nabla$ on each $\sfG^x$ is invariant under the $\sfG$-action. The associated curvature of $\nabla$ is also $\sfG$-invariant. Therefore, the scalar curvature $k$ is a smooth function on $\sfG$ and $\sfG$-invariant. Therefore the scalar curvature $k$ is the pullback of a smooth function on $X$. 

We now restrict attention to a Lie groupoid $\sfG$ whose $t$-fibers $\{\sfG^x\}$ are even dimensional and equipped with $\sfG$-invariant spin structure.  Moreover, we assume that there is a constant $\epsilon>0$ satisfying  $k_{g_A}(x)>\epsilon$ for every $x\in \sfG_0$.  Let $\cals^{\pm}_x$ be the even and odd parts of the spinor bundles on $\sfG^x$ for $x\in X$ (the spinor bundle splits into even and odd parts by our even-dimensionality assumption).  On each $\sfG^x$, consider the leafwise spin Dirac operator $\slashed{D}^+_x:\Gamma(\cals^+_x)\to \Gamma(\cals^-_x)$ and its (formal) adjoint $\slashed{D}^-_x:\Gamma(\cals^-_x)\to \Gamma(\cals^+_x)$.  The Lichnerowicz formula combined with our assumption on scalar curvature implies that 
$$
\slashed{D}^+_x\slashed{D}^-_x\geq \frac{\epsilon}{4} \quad \text{and} \quad \slashed{D}^-_x\slashed{D}^+_x\geq \frac{\epsilon}{4} 
$$
for all $x\in X$.   It follows that $(\slashed{D}^+_x\slashed{D}^-_x)^{-1/2}$ makes sense, and that $P_x^+:=\slashed{D}^+_x(\slashed{D}^-_x\slashed{D}^+_x)^{-1/2}$ is an invertible order zero operator from the Hilbert space completion $H^x_+$ of $\cals^x_+$ to the Hilbert space completion $H^x_-$ of $\cals^x_-$.  

Note that all this structure is $\sfG$-invariant, and therefore in particular $\cals_+$ and $\cals_-$ are pullbacks of bundles on $X$.  Abusing notation, write $\sigma(\slashed{D}^+_x)$ for the symbol of $P_x^+$.  It is straightforward to check using the pseudodifferential calculus, the standard description of the symbol of the Dirac operator and Assumption \ref{ass:kappa} on uniformly bounded directional derivatives of the Riemannian curvature tensor that $(\cals_+,\cals_-,\sigma(\slashed{D}_+))$ is a smooth symbol in the sense of Definition \ref{dfn:geometry-K}, and thus defines a $K$-theory class for $K_0(C^*_e(A^*))$.  Using Assumption \ref{ass:kappa} allows us to apply Theorem \ref{thm:k-theory} and Corollary \ref{cor:index}. We conclude that the analytic index $\Ind_a([\cals^+, \cals^-, \sigma(\slashed{D}_+)])$ is equal to the index defined by $Q^1(\sigma(\slashed{D}_+))$ in $K_0(C^*_\cale(\sfG))$.  Note that we have that $[Q^1(\sigma(\slashed{D}))]=[P_+]$ in $K_0(C^*_\cale(\sfG))$, and as $P_+$ is invertible, $[P_+]=0$. Thus we can conclude that 
\[
\Ind_a\Big([\cals^+, \cals^-, \sigma(\slashed{D}_+)]\Big)=0, 
\]

To summarize, we have the following theorem.
\begin{thm}\label{thm:vanishing}
Let $\sfG$ be a Lie groupoid, and assume that the associated Lie algebroid $A$ has even rank, and is equipped with a spin structure such that the associated Riemannian metric $g_A$ satisfies Assumption \ref{ass:kappa}.  Then the triple $(\cals^+, \cals^-, \sigma(\slashed{D}_+))$ associated to the Dirac operator is a smooth symbol in the sense of Definition \ref{dfn:geometry-K}.  Moreover, if 
$g_A$ has uniformly positive scalar curvature, the analytic index of the K-theory class $[\cals^+, \cals^-, \sigma(\slashed{D}_+)]$ vanishes in $K_0(C^*_\cale(\sfG))$, i.e. 
\[
\Ind_a\Big([\cals^+, \cals^-, \sigma(\slashed{D}_+)]\Big)=0. \qedhere
\]
\end{thm}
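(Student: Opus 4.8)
The plan is to prove the two assertions in turn: that the triple $(\cals^+,\cals^-,\sigma(\slashed{D}_+))$ is a smooth symbol in the sense of Definition \ref{dfn:geometry-K}, and that under uniform positive scalar curvature its analytic index vanishes. For the first assertion I would begin with the observation that all the relevant data --- the $t$-fiber metrics, the spin structure, the Levi-Civita connections, and the spinor bundles $\cals^\pm$ --- are $\sfG$-invariant, so that $\cals^+$ and $\cals^-$ descend to smooth complex bundles over $\sfG_0$, and the principal symbol of $\slashed{D}_+$ is fiberwise Clifford multiplication $\xi\mapsto c(\xi)$. Since $c(\xi)^2=-\|\xi\|^2\cdot\mathrm{Id}$, the endomorphism $\sigma(\slashed{D}_+)$ is invertible off the zero section, with partial inverse $\tau$ built from $c(\xi)/\|\xi\|^2$ and made homogeneous of degree zero outside a neighbourhood of the zero section as in Remark \ref{rem:symbol}; this yields conditions (1)--(3). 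For condition (4) I would write $\widehat{\sigma}$ and $\widehat{\tau}$ in terms of the Clifford algebra and a local orthonormal frame and note that the covariant derivatives $\nabla^k\widehat{\sigma}$ are polynomial expressions in the Christoffel symbols and the curvature tensor $R^\nabla_x$; the uniform bounds on the derivatives of $R^\nabla_x$ supplied by Assumption \ref{ass:kappa} then bound each $N_m(\widehat{\sigma})$ and $N_m(\widehat{\tau})$ uniformly in $x$.

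For the vanishing, I would invoke the Lichnerowicz formula on each $t$-fiber, $\slashed{D}^-_x\slashed{D}^+_x=\nabla^*\nabla+k/4$, where $k$ is the (pulled-back) scalar curvature function. The hypothesis $k\geq\epsilon>0$ forces the uniform lower bounds $\slashed{D}^+_x\slashed{D}^-_x\geq\epsilon/4$ and $\slashed{D}^-_x\slashed{D}^+_x\geq\epsilon/4$ for every $x\in\sfG_0$. Consequently $(\slashed{D}^-_x\slashed{D}^+_x)^{-1/2}$ is defined and uniformly bounded, and a direct computation shows that $P^+_x:=\slashed{D}^+_x(\slashed{D}^-_x\slashed{D}^+_x)^{-1/2}$ is a unitary (in particular invertible) order-zero operator. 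The crucial point is that, because the bound $\epsilon/4$ is uniform, the family $\{P^+_x\}_{x\in\sfG_0}$ assembles into an \emph{honestly} invertible multiplier $P_+$ of $C^*_\cale(\sfG)$, rather than one that is merely invertible fiberwise or modulo the ideal.

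Finally I would apply Corollary \ref{cor:index} to identify $\Ind_a([\cals^+,\cals^-,\sigma(\slashed{D}_+)])$ with the class $[Q^1(\sigma(\slashed{D}_+))]\in K_0(C^*_\cale(\sfG))$. Since $Q^1(\sigma(\slashed{D}_+))$ and $P_+$ are order-zero operators with the same principal symbol, their difference lies in the ideal $C^*_\cale(\sfG)$ inside its multiplier algebra, so the pseudodifferential symbol calculus used in Theorem \ref{thm:k-theory} gives $[Q^1(\sigma(\slashed{D}_+))]=[P_+]$. Because $P_+$ is genuinely invertible, the boundary-map construction defining this class returns $[P_+]=0$, whence $\Ind_a([\cals^+,\cals^-,\sigma(\slashed{D}_+)])=0$. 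I expect the main obstacle to be the second step: promoting the fiberwise positivity into a uniformly invertible multiplier of the Roe algebra genuinely depends on the global lower bound $\epsilon$ together with the bounded-geometry hypotheses of Assumption \ref{ass:kappa}, and the comparison $[Q^1(\sigma(\slashed{D}_+))]=[P_+]$ requires care to confirm that the symbol-level agreement really places the difference inside the ideal $C^*_\cale(\sfG)$.
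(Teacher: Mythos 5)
Your proposal is correct and follows essentially the same route as the paper: $\sfG$-invariance plus Assumption \ref{ass:kappa} to verify the smooth-symbol conditions, the Lichnerowicz formula with the uniform bound $k\geq\epsilon$ to make $P_+=\slashed{D}_+(\slashed{D}_-\slashed{D}_+)^{-1/2}$ an invertible order-zero multiplier, Corollary \ref{cor:index} to identify $\Ind_a$ with $[Q^1(\sigma(\slashed{D}_+))]$, and the equality $[Q^1(\sigma(\slashed{D}_+))]=[P_+]=0$ since the two operators agree at the symbol level and $P_+$ is honestly invertible. The two points you flag as needing care (assembling the fiberwise operators into a multiplier of $C^*_\cale(\sfG)$, and placing $Q^1(\sigma(\slashed{D}_+))-P_+$ in the ideal) are exactly the steps the paper also treats briefly, so your emphasis is well placed.
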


Consider now the two groupoids $\calh_{\widehat{\calf}}$ and $\calh_{\widetilde{\calf}}$ associated to a regular foliation $\calf$ on a closed manifold $M$ as in Section \ref{sec:example}. If we assume that $\calf$ is equipped with a leafwise spin structure and has positive scalar curvature at every point of $M$,  the Lie algebroids of the groupoids $\calh_{\widehat{\calf}}$ and  $\calh_{\widetilde{\calf}}$ in Section \ref{sec:example} are equipped with spin structures such that the associated Riemannian metrics have uniformly positive scalar curvatures. As a corollary of Theorem \ref{thm:vanishing}, we have the following vanishing result about the indices of the leafwise Dirac operators. 
\begin{cor}\label{cor:vanishing}Assume that the regular foliation $\calf$ on a closed manifold $M$ has even rank and a spin structure with uniformly positive scalar curvature $k\geq \epsilon>0$. Let $\slashed{D}_+$ be the leafwise Dirac operator on $M$, so $\slashed{D}_+$ lifts to the corresponding leafwise spin Dirac operators $\widehat{\slashed{D}}_+$ and $\widetilde{\slashed{D}}_+$ on $\widehat{M}$ and $\widetilde{M}$. Let $\sigma(\widehat{\slashed{D}}_+))$ and $\sigma(\widetilde{\slashed{D}}_+)$ be the associated symbol classes in $K_0(C_e(\widehat{\calf}^*))$ and $K_0(C_e(\widetilde{\calf}^*))$ as defined above. The analytic indices of these symbols vanish, i.e. 
\begin{eqnarray*}
\Ind_a\Big( \sigma(\widehat{\slashed{D}}_+)\Big)=0,\qquad 
\Ind_a\Big( \sigma(\widetilde{\slashed{D}}_+)\Big)=0
\end{eqnarray*}
(where we abuse notation by omitting the bundles involved).
\end{cor}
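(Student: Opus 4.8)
The plan is to treat Corollary \ref{cor:vanishing} as a direct application of Theorem \ref{thm:vanishing} to the two specific Lie groupoids $\calh_{\widehat{\calf}}$ and $\calh_{\widetilde{\calf}}$ constructed in Section \ref{sec:example}. The only real work is to check that, for each of these groupoids, the three standing hypotheses of Theorem \ref{thm:vanishing} are met: the Lie algebroid has even rank, it carries a spin structure, and the associated leafwise metric satisfies Assumption \ref{ass:kappa} and has uniformly positive scalar curvature. Once these are in place, Theorem \ref{thm:vanishing} applied to $\calh_{\widehat{\calf}}$ gives $\Ind_a(\sigma(\widehat{\slashed{D}}_+))=0$, and applied to $\calh_{\widetilde{\calf}}$ gives $\Ind_a(\sigma(\widetilde{\slashed{D}}_+))=0$.

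First I would recall from Section \ref{sec:example} that the Lie algebroids of $\calh_{\widehat{\calf}}$ and $\calh_{\widetilde{\calf}}$ are identified with $\widehat{\calf}$ and $\widetilde{\calf}$, and that Assumption \ref{ass:kappa} was already verified there: since $M$ is closed, the leaves of $\calf$ have curvature tensors with uniformly bounded derivatives and injectivity radii bounded below, and because every leaf of $\widehat{\calf}$ and of $\widetilde{\calf}$ is a covering space---hence a local isometry---of a leaf of $\calf$, these bounds are inherited. The even-rank hypothesis is immediate, as $\widehat{\calf}$ and $\widetilde{\calf}$ have the same rank as $\calf$, which is even by assumption. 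For the spin structure, I would pull the fixed leafwise spin structure on $\calf$ back along the leafwise covering maps; since these are local isometries of leaves, the spin structure lifts canonically, giving $\calh_{\widehat{\calf}}$ and $\calh_{\widetilde{\calf}}$ the required $\sfG$-invariant leafwise spin structures, with spinor bundles $\cals^\pm$ that are pullbacks of those on $\calf$.

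The last point to verify is uniform positivity of the scalar curvature. Here I would use that $k$ is continuous and strictly positive on the closed manifold $M$, hence bounded below by its minimum $\epsilon>0$; because the leafwise metrics on $\widehat{\calf}$ and $\widetilde{\calf}$ are pulled back along the leafwise covering maps (local isometries), the corresponding leafwise scalar curvature functions agree pointwise with the pullback of $k$ and are therefore also bounded below by $\epsilon$. This is precisely the hypothesis $k_{g_A}\geq \epsilon$ needed in Theorem \ref{thm:vanishing}. Assembling these verifications, both applications of Theorem \ref{thm:vanishing} go through and yield the two claimed vanishing identities.

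I expect no genuine obstacle here: the corollary is essentially a translation of Theorem \ref{thm:vanishing} into the language of the lifted foliations, and all of its analytic content was already established in that theorem. The only place demanding a little care is the bookkeeping identifying the symbol classes $\sigma(\widehat{\slashed{D}}_+)$ and $\sigma(\widetilde{\slashed{D}}_+)$ in $K_0(C_e(\widehat{\calf}^*))$ and $K_0(C_e(\widetilde{\calf}^*))$ with the Dirac smooth symbols $(\cals^+,\cals^-,\sigma(\slashed{D}_+))$ to which Theorem \ref{thm:vanishing} attaches its vanishing index---that is, confirming that the leafwise Dirac operator of each lifted foliation is the operator governed by the theorem. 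This identification follows from the $\sfG$-invariance and pullback compatibility of all the structures involved, and it is the step I would write out most explicitly.
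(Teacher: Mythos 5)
Your proposal is correct and follows essentially the same route as the paper: the paper's proof likewise invokes Theorem \ref{thm:vanishing} for the two groupoids $\calh_{\widehat{\calf}}$ and $\calh_{\widetilde{\calf}}$, citing the Section \ref{sec:example} discussion for Assumption \ref{ass:kappa} and the lifting of the spin structure and uniformly positive scalar curvature along the leafwise covering maps. Your write-up simply makes explicit the verifications (even rank, lifted spin structure, scalar curvature bound via compactness of $M$ and local isometry of the covers) that the paper compresses into ``as was explained before.''
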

\begin{proof}
As was explained before, if $\calf$ is equipped with a spin structure with positive scalar curvature $k\geq \epsilon>0$, Assumptions \ref{ass:kappa} holds on $\calh_{\widehat{\calf}}$ and $\calh_{\widetilde{\calf}}$.  We conclude from the above discussion that all the assumptions of Theorem \ref{thm:vanishing} hold for both $\widehat{\slashed{D}}_+$ and $\widetilde{\slashed{D}}_+$, so the desired vanishing property follows from Theorem \ref{thm:vanishing}. 
\end{proof}

\section{Appendix}
In this Appendix, we prove the following lemma. Such a result is known to experts, but we could not find a suitable reference so include a proof here for the reader's convenience. 

\begin{lem}\label{lem:cover} Let $\iota$, $\kappa$, $r$, and $R$ be positive constants, and $d$ a positive integer.  Then for any $R>0$, there exists $m\in \mathbb{N}$ depending only on $\iota, \kappa, r, R$, and $d$ with the following property. Let $M$ be a $d$-dimensional complete Riemannian manifold with all sectional curvatures in the interval $[-\kappa, \kappa]$ and injectivity radius bounded below by $\iota$. There exists a cover $\mathcal{U}$ of $M$ by balls of radius $r$ such that for any $U\in \mathcal{U}$ there are at most $m$ balls $V\in\mathcal{U}$ with 
 \[
 V\cap N_R(U)\ne \varnothing,
 \]
 where $N_R(U):=\{y\in M | d(y, U)<R\}$.  Moreover, if $r<\iota$, then each $U\in \mathcal{U}$ is diffeomorphic to $\R^d$.
\end{lem}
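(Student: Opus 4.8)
The plan is to build $\mathcal{U}$ from a maximal $r$-separated net and then control the multiplicity by a volume-packing argument. First I would choose a maximal collection of points $\{x_i\}_{i\in I}$ in $M$ that are pairwise at distance at least $r$ (such a maximal $r$-separated set exists by Zorn's lemma). Maximality forces every point of $M$ to lie within distance $r$ of some $x_i$, so the balls $U_i:=B(x_i,r)$ cover $M$; this is the desired cover $\mathcal{U}$. The final clause of the statement is then immediate: when $r<\iota$, the map $\exp_{x_i}$ is a diffeomorphism from the Euclidean open $r$-ball in $T_{x_i}M$ onto $U_i$, and the Euclidean open ball is diffeomorphic to $\R^d$.

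Next I would reduce the multiplicity estimate to counting net points. If $U_j\cap N_R(U_i)\neq\varnothing$, then there is a point $w$ with $d(w,x_j)<r$ and a point $z\in U_i$ (so $d(z,x_i)<r$) with $d(w,z)<R$, whence the triangle inequality gives $d(x_i,x_j)<R+2r$. So it suffices to bound, uniformly over $i$ and over all admissible $M$, the number $N$ of net points $x_j$ lying in $B(x_i,R+2r)$.

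The count is then obtained by packing. Since the net is $r$-separated, the balls $B(x_j,\rho)$ with $\rho:=\tfrac12\min\{r,\iota,\pi/\sqrt{\kappa}\}$ are pairwise disjoint (as $2\rho\le r$), and each one with $d(x_i,x_j)<R+2r$ is contained in $B(x_i,R+2r+\rho)$. I would then invoke two comparison theorems. The lower sectional curvature bound $-\kappa$ gives $\operatorname{Ric}\ge -(d-1)\kappa$, so the Bishop--Gromov theorem bounds $\vol B(x_i,R+2r+\rho)$ above by the volume $V_{-\kappa}(R+2r+\rho)$ of a ball of that radius in the space form of constant curvature $-\kappa$. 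The upper bound $\kappa$, together with $\rho$ lying strictly below both the injectivity radius and the conjugate radius $\pi/\sqrt{\kappa}$, lets me apply Günther's volume comparison to bound each $\vol B(x_j,\rho)$ below by the volume $V_{\kappa}(\rho)>0$ of a $\rho$-ball in the space form of curvature $+\kappa$. Summing over the disjoint small balls gives
\[
N\cdot V_\kappa(\rho)\ \le\ \vol B(x_i,R+2r+\rho)\ \le\ V_{-\kappa}(R+2r+\rho),
\]
so $m:=\big\lfloor V_{-\kappa}(R+2r+\rho)/V_\kappa(\rho)\big\rfloor$ depends only on $\iota,\kappa,r,R,d$ and bounds the multiplicity as required.

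The main obstacle I anticipate is securing the clean, $M$-independent lower bound on the volumes of the small balls: Günther's inequality is valid only inside both the injectivity radius and the conjugate radius, so the radius $\rho$ must simultaneously satisfy $\rho\le r/2$ (for disjointness of the packing), $\rho<\iota$, and $\rho<\pi/\sqrt{\kappa}$. The choice $\rho=\tfrac12\min\{r,\iota,\pi/\sqrt{\kappa}\}$ meets all three constraints at once, but one must be careful to quote the comparison results in the genuine \emph{ball-volume} form (rather than a pointwise statement about the volume density along a single geodesic) and to check that $V_\kappa(\rho)$ is strictly positive and computed purely from the comparison data; everything else in the argument is routine.
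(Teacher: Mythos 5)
Your proposal is correct and follows essentially the same route as the paper's proof: a maximal $r$-separated net (via Zorn's lemma) whose $r$-balls form the cover, the triangle-inequality reduction to counting net points within distance $R+2r$, and a packing argument combining disjoint small balls with a Bishop--Gromov upper bound and a G\"unther-type lower volume bound. Your treatment is in fact slightly more careful than the paper's at the one delicate point, namely choosing the packing radius $\rho=\tfrac12\min\{r,\iota,\pi/\sqrt{\kappa}\}$ so that the lower volume comparison legitimately applies, where the paper simply takes radius $\min(r/2,\iota)$ and cites the comparison theorem from Berger's book.
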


\begin{proof} Using Zorn's lemma, we can choose a maximal subset $Z$ of $M$ such that $d(x,y)\geq r$ for all $x,y\in Z$. Let 
\[
\calu:=\{B(z; r) \mid z\in Z\},
\]
where $B(z; r)$ is the geodesic ball center at $z$ with radius $r$.  If $r<\iota$, each $B(z; r)$ is diffeomorphic to an open ball in $\mathbb{R}^d$ via the exponential map.  Note that as $Z$ is maximal, the union of $B(z; r)$ in $\calu$ must cover the whole $M$, and so $\calu$ is an open cover of $M$. 

Fix $U=B(z; r)\in \calu$, and let $S_z$ be the set of $y\in Z$ such that 
\[
B(y;r)\cap B(z; R+r)\ne \varnothing.
\]
We consider the collection of $B(y; r/2)$ for $y\in S_z$.  The triangle inequality shows that for all $y\in S_z$,
\[
B(z; R+3r)\supseteq B(y; r)\supseteq B(y; r/2).
\]
Furthermore, as the points in $S_z$ are at least $r$-distance apart, the sets $\{B(y;r/2)\mid y\in S_z\}$ are mutually disjoint. Hence we have the following inclusion
\[
B(z; R+3r)\supseteq \bigsqcup_{z_i\in S_z} B(z_i; r/2).
\] 

By the Bishop-Gromov theorem \cite[Theorem 107, Page 310]{be03}, there is a constant $C>0$ depending only only $\kappa$, $d$, and $R+3r$ such that 
\[
\operatorname{Volume}(B(z; R+3r))\leq C. 
\]
On the other hand, a standard result in comparison theory \cite[Theorem 103, Page 306]{be03} gives us a constant $c>0$ depending only on $r$, $\kappa$, $\iota$ and $d$ such that 
\[
\operatorname{Volume}(B(y; r/2))\geq \operatorname{Volume}(B(y;\min(r/2,\iota))\geq c,
\]
for each $y\in S_z$. 

Combing these two volume bounds, we see that 
\[
C\geq \operatorname{Volume}(B(z; R+3r)) \geq \operatorname{Volume}\Big(\bigsqcup_{y\in S_z}B(y; r/2)\Big)\geq |S_z| c. 
\]
We conclude that $|S_z|\leq C/c$, and note that $C/c$ only depends on $\iota$, $\kappa$, $R$, $r$, and $d$. We choose $m$ to be an integer greater than or qual to $C/c$, and therefore complete the proof. 
\end{proof}

\vspace{2mm}

{\small \noindent{Xiang Tang}, Department of Mathematics, Washington
University, St. Louis, MO, 63130, U.S.A.,
Email: xtang@math.wustl.edu.

\vspace{2mm}

\noindent{Rufus Willett}, Department of Mathematics, University of Hawai'i, Honolulu, HI, 96822, U.S.A.,
Email: rufus.willett@hawaii.edu.

\vspace{2mm}
\noindent{Yi-Jun Yao}, School of Mathematical Sciences, Fudan
University, Shanghai 200433, P.R.China., Email:
yaoyijun@fudan.edu.cn.

}
\end{document}